\documentclass[11pt]{amsart}

\usepackage[backref=page]{hyperref}

\usepackage[latin9]{inputenc}
\usepackage{color}

\usepackage{amsfonts}
\usepackage{amssymb}
\usepackage{amsmath}
\usepackage{amsthm}
\usepackage{geometry}
\usepackage{verbatim}
\usepackage[toc,page]{appendix}
\pagestyle{headings}
\usepackage{todonotes}
\usepackage[all]{xy}
\usepackage{bbm}
\usepackage[capitalise]{cleveref}

\usepackage[OT2,T1]{fontenc}
\DeclareSymbolFont{cyrletters}{OT2}{wncyr}{m}{n}
\DeclareMathSymbol{\Sha}{\mathalpha}{cyrletters}{"58}

\newcommand{\mG}{\mathbb{G}}
\newcommand{\Z}{\mathbb{Z}}
\newcommand{\eps}{\varepsilon}

\DeclareMathOperator\Gal{Gal}
\DeclareMathOperator\Br{Br}
\DeclareMathOperator\Aut{Aut}

\DeclareMathOperator\GL{GL}
\DeclareMathOperator\SL{SL}

\DeclareMathOperator\Image{Im}
\DeclareMathOperator\HG{H}
\DeclareMathOperator\Pic{Pic}

\DeclareMathOperator\Zd{Z}
\DeclareMathOperator\HGH{\hat H}
\DeclareMathOperator\Hom{Hom}
\DeclareMathOperator\Ext{Ext}

\DeclareMathOperator\BC{BC}

\DeclareMathOperator\ZG{Z}

\DeclareMathOperator\res{res}

\DeclareMathOperator\Res{R}

\DeclareMathOperator\coker{coker}
\DeclareMathOperator\Ind{Ind}
\DeclareMathOperator\Inv{inv}
\DeclareMathOperator\spec{Spec}
\DeclareMathOperator\charak{char}
\DeclareMathOperator\Dec{Dec}

\DeclareMathOperator\Map{Map}

\newcommand{\sep}[1]{{#1}^{\text{sep}}}
\newcommand{\sub}[1]{\mbox{}_{#1}}
\newcommand{\Spec}{\spec}
\newcommand{\til}{\widetilde}

\parskip0.5em
\begin{document}
\providecommand{\keywords}[1]{\textbf{\textit{Keywords: }} #1}
\newtheorem{theorem}{Theorem}[section]
\newtheorem{lemma}[theorem]{Lemma}
\newtheorem{observ}[theorem]{Observation}
\newtheorem{prop}[theorem]{Proposition}
\newtheorem{cor}[theorem]{Corollary}
\newtheorem{problem}[theorem]{Problem}
\newtheorem{question}[theorem]{Question}
\newtheorem{conjecture}[theorem]{Conjecture}
\newtheorem{claim}[theorem]{Claim}
\newtheorem{defn}[theorem]{Definition} 
\theoremstyle{remark}
\newtheorem{remark}[theorem]{Remark}
\newtheorem{example}[theorem]{Example}
\newtheorem{condenum}{Condition}

\newcommand{\cc}{{\mathbb{C}}}   
\newcommand{\mQ}{{\mathbb{Q}}}   
\newcommand{\mP}{{\mathbb{P}}}   
\newcommand{\ff}{{\mathbb{F}}}  
\newcommand{\nn}{{\mathbb{N}}}   
\newcommand{\qq}{{\mathbb{Q}}}  
\newcommand{\rr}{{\mathbb{R}}}   
\newcommand{\zz}{{\mathbb{Z}}}  
\newcommand{\fp}{{\mathfrak{p}}}
\newcommand{\fP}{{\mathfrak{P}}}
\newcommand{\fq}{{\mathfrak{q}}}
\newcommand{\fr}{{\mathfrak{r}}}
\newcommand{\mS}{{\mathcal{S}}}
\newcommand{\ra}{{\rightarrow}}
\newcommand{\divides}{\,|\,}

\newcommand{\X}{X}

\newcommand\DN[1]{{\color{blue} {#1}}}
\newcommand\DK[1]{{\color{red} {#1}}}
\newcommand\oline[1] {{\overline{#1}}}

\def\technion{Department of Mathematics, Technion - Israel Institute of Technology, Haifa, Israel}
\def\penn{Department of Mathematics, University of Pennsylvania, PA, USA}

\title[Rational connectedness and parametrization]{On rational connectedness and parametrization of finite Galois extensions}
\author{Danny Krashen}
\address{\penn}
\email{dkrashen@upenn.edu}
\author{Danny Neftin}
\address{\technion}
\email{dneftin@technion.ac.il}%

\begin{abstract}
Given two $G$-Galois extensions of $\mQ$, is there an extension of $\mQ(t)$ that specializes to both? The equivalence relation on $G$-Galois extension of $\mQ$,  induced by the above question, is called $R$-equivalence. The number of $R$-equivlance classes indicates how many rational spaces are required in order to parametrize all $G$-Galois extensions of $\mQ$. We determine the $R$-equivalence classes  for  basic families of groups $G$,
and consequently obtain parametrizations of the $G$-Galois extensions of $\mQ$ in the absence of a generic extension for $G$. 
\end{abstract}
\maketitle

\section{Introduction}
The geometry of parametrizing spaces of $G$-Galois extensions  over a given number field $K$ is a subject of major interest in arithmetic geometry \cite{Sal, BR,  Deb, Wit}. For a fixed finite group $G$, such parametrizing spaces $X$ always exist, and are represented by versal $G$-torsors $Y\ra X$ \cite{BR}, but their geometry and their sets of rational points $X(K)$ are far from understood. Properties of interest include the rationality of $X$, cf.\ Noether's problem \cite{Wit}; and the weaker property of retract rationality of $X$ \cite{Sal}, or equivalently the existence of a generic extension for $G$ over $K$ \cite{Sal} or \cite[Chp.\ 5]{JLY}.

In this paper, we consider {\it $R$-equivalence} (a.k.a.\ rational connectedness) on such parametrizing spaces $X$  for $G$-Galois extensions of $K$. 
More precisely,  we represent $G$-Galois extensions  as morphisms in $\HG^1(K,G)=\Hom(\Gamma_K,G)/\sim$, where $\Gamma_K$ is the absolute Galois group of $K$, and $\sim$ is equivalence by inner automorphisms of $G$. We then say $\alpha,\beta\in \HG^1(K,G)$ are {\it $R$-equivalent} over $K$ if there exist $\gamma_1(t),\ldots,\gamma_r(t)\in\HG^1(K(t),G)$, $r\geq 1$ with specializations:  
 $\gamma_1(0)=\alpha$, $\gamma_r(1)=\beta$,  and $\gamma_i(1)=\gamma_{i+1}(0)$ for $i=1,\ldots,r$, cf.\ \S \ref{sec:setup}.
Note that a single  $\gamma_i(t)$ corresponds to a $G$-Galois extension $E_i/K(t)$ whose specializations at $t=0$ and $1$ are the $G$-Galois extensions corresponding to $\gamma_i(0)$ and $\gamma_i(1)$, resp., so that $E_1,\ldots,E_r/K(t)$ is a sequence of $G$-Galois extensions connecting the $G$-Galois extensions corresponding to $\alpha$ and $\beta$.  This notion of $R$-equivalence coincides with the classical notion of $R$-equivalence  for versal torsors for $G$, see Lemma \ref{lem:BG-T}. The classical notion of $R$-equivalence has been studied on tori \cite{CTS2}, algebraic groups \cite{Gille}, and when $G$ is finite over function fields $K$ over $p$-adic fields \cite{MB}, but little is known concerning these parameterizing spaces/versal torsors for $G$-Galois extensions of number fields.
 
$R$-equivalence using a single extension $E_1/K(t)$, that is, with $r=1$,  amounts to the following property for pairs of extensions: \\
 {\bf $G$-BB$^{n}_K$}:  every $n$ classes in $\HG^1(K,G)$ are the specializations of some $\gamma(t)\in \HG^1(K(t),G)$.\\
This property is analogous to the Beckmann-Black lifting property for a single extension \cite{BEC}, or Property $G$-\textbf{BB} in \cite{Deb2}. The property {\bf $G$-BB$^2_K$} was first raised by Harbater, and examples of fields $K$ and groups $G$ for which it does not occur were given in \cite[Appendix A]{CT2}, namely, with $K$ a $2$-adic field and $G=C_8$ the cyclic group of order $8$, or with $K=F((t))$ for the field  $F$ of invariants of a certain $p$-group $G$. However,  little is known about $R$-equivalence and {\bf $G$-BB$^n_K$} over number fields $K$ in the absence of a generic polynomial for $G$: 

For abelian groups $G$ and $K=\mQ$, 
the existence of a generic (resp.\ parametric) extension for odd  (resp.\ $2$-power) order $G$ over $\mQ$  \cite[\S 5.3]{JLY} (resp.\ \cite{Schneps}) implies  that $\HG^1(\mQ,G)$ has {\bf $G$-BB$^2_K$} and in particular is {\it $R$-trivial}, that is, has one $R$-equivalence class. However, already for quadratic fields $K$ and the cyclic group $G=C_{2^s}$, the number of $R$-equivalence classes has so far been unknown. For Dihedral groups $G=D_{2^s}$ of order $2^{s+1}$,  it is known that $\HG^1(K,D_{2^s})$, $s=2,3$, have {\bf $G$-BB$^2_K$} since a generic extension exists \cite{Black2,Led2, CSK}. However, so far the $R$-triviality of $\HG^1(K,D_{2^s})$ has been unknown for all $s\geq 4$. 


In the paper we study the number $r=r(K,G):=\HG^1(K,G)/R$ of $R$-equivalence class on $\HG^1(K,G)$ for basic families of groups $G$ when a generic extension does not exist or is unknown to exist. We show that  this $r$ is sometimes also the minimal number of extensions of a rational function field $F=K(t_1,\ldots,t_d)$, $d>0$ required to parametrize all $G$-Galois extensions, allowing the  parametrization of $G$-Galois extensions for groups $G$ that lack a generic extension. In similarity to \cite{DKLN1}, say  $\{\gamma_1,\ldots,\gamma_r\}\subseteq \HG^1(F,G)$ is a {\it densely parametric set} for $\HG^1(K,G)$  if every  $\alpha\in \HG^1(K,G)$ is a specialization of some $\gamma_i$ in every open subset of a model $X$ for $F$, cf.\ Definition \ref{def:param}. 

Already when $G$ is abelian,   $r>1$ is possible. All such  nontrivial examples come from cyclic $2$-groups $G=C_{2^s}$ where $r$ is given as follows.  
Let $\mu_n=\langle\zeta_n\rangle$ denote the $n$-th roots of unity, $\charak K$ the characteristic of $K$, and $\Br(L/K)$  the relative Brauer group of $L/K$. 
\begin{theorem}\label{thm:2-power-cyclic}
Let $K$ be a field of $\charak{K} \neq 2$, let $s\geq 3$ be an integer, and $E\subseteq K(\mu_{2^s})$  the subfield fixed by conjugation $\zeta_n\to\zeta_n^{-1}$. 
Then  the number of $R$-equivalence classes $r=\#\HG^1(K,C_{2^s})/R$  is the cardinality of the quotient of $\Br(K(\mu_{2^s})/K)$ by $\Br(E/K)+\Br(K(\sqrt{-1})/K)$. 
Furthermore, this $r$ is the minimal cardinality of a parametric set   for $\HG^1(K,C_{2^s})$ over rational function fields $K(t_1,\ldots,t_d)$, $d\in \mathbb N$. 
\end{theorem}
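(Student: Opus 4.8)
The plan is to compute $r$ first and then deduce the minimality statement from it.

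\emph{Reduction.} Since $C_{2^s}$ is abelian, $\HG^1(K,C_{2^s})=\Hom(\Gamma_K,\mathbb{Z}/2^s)$ is a group, and viewing a class over $K$ as a constant class over $K(t)$ lets one concatenate and translate connecting families; hence $R$-equivalence is exactly the coset relation of the subgroup $R(K)\subseteq\HG^1(K,C_{2^s})$ of classes $R$-equivalent to the trivial one, so $r=[\HG^1(K,C_{2^s}):R(K)]$. It therefore suffices to construct a surjective homomorphism $\Phi\colon\HG^1(K,C_{2^s})\to Q$ with kernel $R(K)$, where $Q:=\Br(K(\mu_{2^s})/K)/(\Br(E/K)+\Br(K(\sqrt{-1})/K))$.

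\emph{The invariant $\Phi$.} I would build $\Phi$ (generalizing the invariant of \cite[Appendix A]{CT2}) by comparing $\alpha$ with the cyclotomic character. Restricting $\alpha$ to $K':=K(\mu_{2^s})$, Kummer theory identifies it with a class $a_\alpha\in K'^\times/(K'^\times)^{2^s}$ which, because $\alpha$ descends to $K$, lies in the eigenspace for the cyclotomic action of $\Gal(K'/K)$; the norm $N_{K'/K}(a_\alpha)$ is then forced to be a $2^s$-th power in $K'^\times$, and the obstruction to its being a $2^s$-th power already in $K^\times$ — interpreted through the cyclotomic cocycle — yields a class in $\Br(K)$ split by $K'$, whose image in $Q$ is $\Phi(\alpha)$. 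Passing to $Q$ is what makes $\Phi$ canonical: the summand $\Br(E/K)$ absorbs the choice of primitive root of unity (the conjugation $\zeta\mapsto\zeta^{-1}$) and $\Br(K(\sqrt{-1})/K)$ the remaining sign/order-two ambiguities. Checking that $\Phi$ is a well-defined homomorphism functorial in $K$ is the first, essentially bookkeeping, task; the underlying $R$-triviality computations are powered by the structure of cyclic $2^s$-extensions as torsors under a Weil restriction from $E$ with a quadratic twist over $K(\sqrt{-1})$, for which the $R$-equivalence machinery of \cite{CTS2} applies.

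\emph{$\ker\Phi=R(K)$ and surjectivity.} For $R(K)\subseteq\ker\Phi$: functoriality gives for $\gamma\in\HG^1(K(t),C_{2^s})$ a class $\Phi_{K(t)}(\gamma)$ in the analogue of $Q$ over $K(t)$ with $\mathrm{sp}_0\Phi_{K(t)}(\gamma)=\Phi(\gamma(0))$ and likewise at $1$; a residue computation with Faddeev's sequence for $\Br(K(t))$ and its cyclotomic-tower variant shows $Q$ is $\mathbb{A}^1$-invariant — the residual classes split along the $E$- and $K(\sqrt{-1})$-directions and so vanish in $Q$ — so $\Phi_{K(t)}(\gamma)$ is constant and $\Phi(\gamma(0))=\Phi(\gamma(1))$; telescoping a chain gives the inclusion. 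Conversely, if $\Phi(\alpha)=0$ then the obstruction of $\alpha$ lies in $\Br(E/K)+\Br(K(\sqrt{-1})/K)$; one shows that classes induced from $E$ (whose cyclotomic $2$-extension is cyclic, so Kummer-type parametric families are available there) and from $K(\sqrt{-1})=K(\mu_4)$ (where a Schneps-type parametric family \cite{Schneps} of cyclic $2^s$-extensions is available) are $R$-trivial, transporting the $R$-equivalences back to $K$ by restriction--corestriction, and that the hypothesis lets one decompose $\alpha$, modulo such classes, as a sum of ones induced from $E$ and from $K(\mu_4)$; hence $\alpha\in R(K)$. Surjectivity of $\Phi$ is the reverse construction: a cyclic algebra representing a given class of $\Br(K(\mu_{2^s})/K)$ is realized as $\Phi(\alpha)$ for a cyclic $2^s$-extension $\alpha$ with prescribed local behaviour by a Grunwald-type argument — here the classical Grunwald--Wang defect for $\mathbb{Z}/2^s$, $s\ge3$, is precisely what can render $Q$, hence $r$, nontrivial. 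I expect this constructive inclusion $\ker\Phi\subseteq R(K)$, turning the vanishing of a Brauer obstruction into explicit chains of cyclic $2^s$-extensions of $K(t)$, to be the main obstacle.

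\emph{Minimality of parametric sets.} For the lower bound, let $\{\gamma_1,\dots,\gamma_m\}$ be a parametric set over $F=K(t_1,\dots,t_d)$ with model $X$ (Definition~\ref{def:param}). As $X$ is $K$-rational, any two $K$-points of the dense open on which a given $\gamma_j$ is defined lie, after identifying $X$ birationally with $\mathbb{P}^d$, on a line meeting that open in an open subset of $\mathbb{P}^1$; restricting $\gamma_j$ to that line exhibits the two specializations as specializations of one class in $\HG^1(K(u),C_{2^s})$, so all specializations of $\gamma_j$ lie in a single $R$-class. Since the $\gamma_j$ jointly realize every class, hence every $R$-class, $m\ge r$. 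For the upper bound one exhibits a parametric set of size $r$: by a flasque-resolution argument \cite{CTS2} for the versal object of $C_{2^s}$-extensions, $R(K)$ is the image of the $K$-points of a $K$-rational variety, whose generic point yields a densely parametric family for the trivial $R$-class; translating by a base point $\alpha_c$ with $\Phi(\alpha_c)=c$ yields one for the class $c$, and these $r$ families form a minimal parametric set, as in \cite{DKLN1}. The density here — that the translated generic family hits every $\alpha$ in its $R$-class over every open — is the second place where real work is needed.
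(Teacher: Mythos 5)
Your overall architecture --- reduce to computing the index of the subgroup $R(K)$ of $R$-trivial classes, produce a surjective homomorphism onto the Brauer quotient with kernel $R(K)$, and obtain minimality from the fact that a single family over a rational base hits only one $R$-class together with a flasque-resolution construction of $r$ families --- is sound, and your minimality paragraph is essentially the paper's argument (Corollary~\ref{cor:param} for the upper bound, the one-$R$-class-per-rational-family observation for the lower bound). The gap is in the core bijection. First, your surjectivity argument for $\Phi$ invokes a Grunwald--Wang-type construction of cyclic $2^s$-extensions with prescribed local behaviour; this is meaningful only over global fields, whereas the theorem is stated for an arbitrary field of characteristic $\neq 2$. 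Second, the inclusion $\ker\Phi\subseteq R(K)$ --- which you correctly flag as the main obstacle --- is left as a strategy: the claim that vanishing of the Brauer obstruction lets one decompose $\alpha$, modulo $R$-trivial classes, into classes ``induced from $E$'' and ``from $K(\sqrt{-1})$'' is precisely the hard content and is not justified; restriction--corestriction does not obviously transport $R$-equivalences in the needed direction, and it is not even clear that your Kummer-norm invariant is well defined and independent of choices before these points are settled.

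The paper sidesteps both problems by never constructing a direct cohomological invariant. It embeds the constant group $C_{2^s}$ into the quasitrivial torus $P=\Res_{M_1/K}\mG_m$ (Lemma~\ref{lem:embedding} with $\eps=1$), computes the quotient torus $T$ explicitly (Lemma~\ref{lem:quotient}), and writes down an explicit flasque resolution $1\to S\to Q\to T\to 1$, verifying flasqueness by computing $\HGH^{-1}(H,\X(S))$ for every subgroup $H$ (Lemma~\ref{lem:flasque}). The Colliot-Th\'el\`ene--Sansuc theorem $T(K)/R\cong\HG^1(K,S)$ then yields $\HG^1(K,C_{2^s})/R\cong\HG^1(K,S)$ (Corollary~\ref{cor:R-equiv}): both the surjectivity and the identification of the kernel with $R(K)$ come for free, over any field and with no local--global input. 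The remaining work is the computation of $\HG^1(K,S)$ as a group of special projective conorms (Proposition~\ref{prop:r-equiv-cosets}, a diagram chase on sublattices of $\X(S)$) and its identification with $\coker\bigl(\Br(E/K)+\Br(K(\sqrt{-1})/K)\to\Br(K(\mu_{2^s})/K)\bigr)$ via the Lyndon--Hochschild--Serre spectral sequence (Proposition~\ref{prop:projective}). To salvage your route you would at minimum need to replace the Grunwald step by the surjectivity $T(K)\twoheadrightarrow\HG^1(K,C_{2^s})$ coming from $\HG^1(K,P)=1$, and replace the induced-class decomposition by the CTS criterion that two points of $T(K)$ are $R$-equivalent iff they agree in $\HG^1(K,S)$ --- at which point you will have reproduced the paper's proof.
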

Note that explicit $r(K,C_{2^s})$ extensions parametrizing $\HG^1(K,C_{2^s})$ were given in \cite[Theorem 4.2]{Schneps}.  Theorem \ref{thm:2-power-cyclic} shows this is the minimal number of such extensions. Note that for number fields $K$, the subgroup $\Br(E/K)+\Br(K(\sqrt{-1})/K)$ coincides with Tignol's decompsable subgroup $\Dec(K(\mu_{2^s})/K)$, see \cite{Tig}.

In pparticular, for $G=C_{2^s}$, the theorem shows that $\HG^1(K,G)$ has the following Beckmann--Black-type property: Every $\alpha_1,\ldots,\alpha_n\in \HG^1(K,G)$, $n\geq 2$ which are $R$-equivalent are the specialization of some $\alpha\in \HG^1(K(t),G)$.  
For an abelian group $A=O\times \prod_{i=1}^k C_{2^{s_i}}$, where $O$ is of odd order and $s_i\geq 1$ are integers, it follows that the number of $R$-equivalence classes is $\prod_{i=1}^kr(K,C_{2^{s_i}})$. Already for $A=C_8$ and $K=\mQ(\sqrt{-17})$, the theorem yields examples of non-$R$-equivalent  classes in  $\HG^1(K,A)$. In particular, there exist $C_8$-Galois extensions $L_1,L_2/K$   for which there exists no $C_8$-Galois extension $L/K(t)$ that specializes to both  $L_1$ and $L_2$, see \S\ref{sec:exam1}. 
As already noted in \cite[Ex.\ 5.5]{DKLN2},  this implies there exists no $G$-Galois extension $L/K(t_1,\ldots,t_d)$ that specializes to all $G$-Galois extensions of $K$ for any $d>0$.


The second family of groups we consider is the family of semidirect products $G=C\rtimes H$ whose kernel $C$ is cyclic of odd order. Here, as well, the number of $R$-equivalence classes gives the cardinality of a minimal parametrizing set: 
\begin{theorem}\label{thm:semisimple} Let $K$ be a field. Suppose that a finite group $H$ acts on a cyclic group $C$ of odd order coprime to $\charak K$.  Then the number $\#\HG^1(K,C\rtimes H)/R$ of $R$-equivalence classes  coincides with that on $\HG^1(K,H)$. If furthermore $\HG^1(K,H)$ has a parametrizing set of cardinality $r$ over a $K$-rational function field, then so does $\HG^1(K,C\rtimes H)$. 
\end{theorem}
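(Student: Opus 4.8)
The plan is to exploit the split exact sequence $1\to C\to G\xrightarrow{\pi}H\to 1$ together with its section $s\colon H\hookrightarrow G$. Functoriality of $\HG^1(K,-)$ gives maps $\pi_*\colon\HG^1(K,G)\to\HG^1(K,H)$ and $s_*\colon\HG^1(K,H)\to\HG^1(K,G)$ with $\pi_*\circ s_*=\mathrm{id}$, and likewise over $K(t)$ compatibly with specialization at $t=0,1$; hence $\pi_*,s_*$ descend to maps $\bar\pi_*,\bar s_*$ between the sets of $R$-equivalence classes with $\bar\pi_*\circ\bar s_*=\mathrm{id}$, so $\bar s_*$ is injective. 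The first assertion therefore reduces to the surjectivity of $\bar s_*$, i.e.\ to showing that every $\alpha\in\HG^1(K,G)$ is $R$-equivalent to $s_*(\pi_*\alpha)$.

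I would establish this fibrewise. Fix $\beta\in\HG^1(K,H)$, represented by a homomorphism $\Gamma_K\to H$ (still written $\beta$), and let $C_\beta$ denote $C$ viewed as a $\Gamma_K$-module through $\beta$ and the given $H$-action. A cocycle computation shows that every element of $\pi_*^{-1}(\beta)$ is represented by a cocycle $\sigma\mapsto c(\sigma)\,s(\beta(\sigma))$ with $c\in\Zd^1(\Gamma_K,C_\beta)$, and that the resulting natural map $\HG^1(K,C_\beta)\to\pi_*^{-1}(\beta)$ (sending $0\mapsto s_*\beta$) is surjective; the same holds over $K(t)$, compatibly with specialization and with the constant class $\beta\in\HG^1(K(t),H)$. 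It thus suffices to know that for $C$ cyclic of odd order coprime to $\charak K$ and any such twisted module $C_\beta$, the set $\HG^1(K,C_\beta)$ is \emph{$R$-trivial}: every class is joined to $0$ by a chain of classes of $\HG^1(K(t),C_\beta)$. This is precisely the odd-order case of the analysis behind Theorem~\ref{thm:2-power-cyclic}, in which the relevant quotient of relative Brauer groups vanishes because $\#C$ is odd; the mechanism is a quasi-trivial (``coinduced'') resolution $1\to C_\beta\to Q_1\to Q_2\to 1$ of the finite module $C_\beta$ — available since $\#C$ is invertible in $K$ — together with Hilbert~90 for the quasi-trivial $Q_1$ and the $K$-rationality of $Q_2$, which let one present each class of $\HG^1(K,C_\beta)$ as the image of a $K$-point of $Q_2$ and deform it to the identity through $K(t)$-points. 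Pulling the resulting chain back through the fibre identification gives a chain in $\HG^1(K(t),G)$ lying over the constant $\beta$ and starting at $s_*\beta$ that reaches $\alpha$; taking $\beta=\pi_*\alpha$ yields $\alpha$ $R$-equivalent to $s_*(\pi_*\alpha)$, proving the first assertion.

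For the second assertion, let $\{\gamma_1,\dots,\gamma_r\}\subseteq\HG^1(F,H)$ be densely parametric for $\HG^1(K,H)$ over a $K$-rational function field $F$. The goal is to produce, for each $i$, a single class $\delta_i\in\HG^1(F',G)$ over a possibly larger $K$-rational function field $F'\supseteq F$, lying over $\gamma_i$, such that $\{\delta_1,\dots,\delta_r\}$ is densely parametric for $\HG^1(K,G)$. I would build $\delta_i$ inside the fibre of $\HG^1(F',G)\to\HG^1(F',H)$ over $\gamma_i$ by twisting the base point $s_*\gamma_i$ by a \emph{generic} class $\theta_i\in\HG^1(F',C_{\gamma_i})$: using the quasi-trivial resolution over $F$, one takes the class attached to the generic point of $Q_2$ after base change, so that — $Q_2$ being rational — the specializations of $\theta_i$ densely exhaust $\HG^1(K,C_\beta)$ for every specialization $\beta$ of $\gamma_i$. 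Then, given $\alpha\in\HG^1(K,G)$ with $\beta:=\pi_*\alpha$, choose $i$ with $\beta$ a dense specialization of $\gamma_i$; the class of $\alpha$ in $\pi_*^{-1}(\beta)$ is the image of a dense specialization of $\theta_i$, whence $\alpha$ is a dense specialization of $\delta_i$. Thus $\{\delta_i\}$ is densely parametric of cardinality $r=\#\HG^1(K,C\rtimes H)/R$, which, as any densely parametric set has at least $\#\HG^1(K,C\rtimes H)/R$ elements, is minimal.

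I expect the main obstacle to be the second assertion: constructing the generic class $\theta_i$ for the twisted finite module $C_{\gamma_i}$ and verifying that the identification of the fibre $\pi_*^{-1}(\gamma_i)$ with (a quotient of) $\HG^1(-,C_{\gamma_i})$, and the twisting of $s_*\gamma_i$ by $\theta_i$, are compatible with specialization in the strong \emph{dense} sense required, uniformly over all specializations $\beta$ of $\gamma_i$. The $R$-triviality of $\HG^1(K,C_\beta)$ for odd cyclic $C$ is comparatively routine given the quasi-trivial resolution, but one must take care that the connecting chain stays inside a single fibre of $\pi$ so as not to perturb the $H$-component; this is what makes the split structure of $G=C\rtimes H$, rather than a mere surjection onto $H$, essential.
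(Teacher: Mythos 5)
Your overall architecture matches the paper's: use the section $s\colon H\to C\rtimes H$ to reduce the first assertion to showing that each class $\alpha$ is $R$-equivalent to $s_*(\pi_*\alpha)$, identify the fibre of $\pi_*$ over $\beta$ with a quotient of $\HG^1(K,{}_\beta C)$ by twisting, and reduce everything to the $R$-triviality of $\HG^1(K,C_\beta)$ for the twisted odd cyclic module; for the parametrization statement, run the same construction in families over the base of the parametrizing torsors for $H$. That is exactly the paper's plan (Proposition \ref{prop:cyclic-odd} together with Lemma \ref{relative embedding} and the twisting formalism of \S\ref{sec:twists}).

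However, there is a genuine gap at the central step. You justify the $R$-triviality of $\HG^1(K,C_\beta)$ by taking a quasitrivial resolution $1\to C_\beta\to Q_1\to Q_2\to 1$ and invoking ``the $K$-rationality of $Q_2$'' to deform any $K$-point of $Q_2$ to the identity along $K(t)$-points. The quotient of a quasitrivial torus by a finite subgroup is not rational, and not even $R$-trivial, in general; indeed the entire content of Theorem \ref{thm:2-power-cyclic} is that for $C=C_{2^s}$ this quotient torus $T$ satisfies $T(K)/R\cong\HG^1(K,S)\neq 1$ for suitable $K$. Your mechanism as described makes no essential use of the oddness of $\#C$, so it would equally ``prove'' $R$-triviality for $C_{2^s}$, contradicting that theorem. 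What is actually needed --- and what the paper supplies in Proposition \ref{prop:cyclic-odd} --- is: reduce to $C$ of odd prime-power order, observe that $\Aut(C)$ is then cyclic so the splitting field of $C_\beta$ is a cyclic (hence Sylow-cyclic) extension, take a flasque resolution $1\to S\to Q\to T\to 1$ of $T=Q_1/C_\beta$, and apply the Endo--Miyata theorem that a flasque lattice split by a Sylow-cyclic group is invertible; this gives $\HG^1(K,S)=1$, hence $T(K)/R=1$ by Colliot-Th\'el\`ene--Sansuc, hence the $R$-triviality of $\HG^1(K,C_\beta)$. The same issue propagates to your second assertion: your ``generic class'' $\theta_i$ lives over a base change of $Q_2$, which need not be a rational space, so $F'$ need not be a $K$-rational function field; the paper instead parametrizes by the quasitrivial torus $\mathcal Q$ of a relative flasque resolution, and the needed surjectivity $\mathcal Q_y(K)\to\mathcal T_y(K)$ for every $K$-point $y$ again rests on the vanishing $\HG^1(K,\mathcal F_y)=1$ coming from Endo--Miyata. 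The skeleton of your argument is right, but the point where the hypothesis ``$C$ cyclic of odd order'' does its work is exactly the point you have left unjustified.
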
 
As $H$ and hence $G=C\rtimes H$ may not have a generic extension, this provides further examples of finite parametrizing sets in the absence of a generic extension.  
Note that $C$ may be replaced by an odd order abelian group as long as the action of $H$ is {\it semisimple}, that is, $C$ decomposes as a direct sum $\bigoplus_{i=1}^k C_i$ of $H$-invariant cyclic subgroups $C_i$, $1,\ldots,k$. Furthermore, the group $C$ in Theorem \ref{thm:semisimple} can be replaced by an iterated semidirect product $(C_k\rtimes (\cdots\rtimes (C_1\rtimes H)))$ with odd-order cyclic kernels. 

In the opposite scenario where $A$ is a cyclic $2$-group, 
the analysis of $R$-equivalence on $\HG^1(K,C\rtimes H)$ is much more difficult, and even its finiteness is unknown. 
Among these, we focus on the basic family of Dihedral groups $D_{2^s}=C_{2^s}\rtimes C_2$, $s\geq 4$ where a generic polynomial is unknown to exist and even the Beckmann-Black problem is open. 
\begin{theorem}\label{thm:main}
Let $s\geq 2$ and $K$ be a number field. 
Then $\HG^1(K,D_{2^s})$ is $R$-trivial. 
\end{theorem}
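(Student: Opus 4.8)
The plan is to first strip off the quadratic subextension and then to analyze the cyclic part over it. A $D_{2^s}$-extension $L/K$ has quadratic subfield $F=L^{\langle r\rangle}=K(\sqrt d)$, where $\langle r\rangle\cong C_{2^s}$ is the cyclic normal subgroup, and $L/F$ is a cyclic extension of degree $2^s$ such that conjugation by a lift of the nontrivial element of $\Gal(F/K)$ inverts $\Gal(L/F)$ (``anti-invariance''), together with the property that this element lifts to $\Gal(L/K)$ in order $2$ — the dihedral rather than generalized-quaternionic case; the latter invariant is locally constant along $K(t)$-families, hence harmless once the endpoints are dihedral. Since $\HG^1(K,C_2)$ is $R$-trivial and the surjection $D_{2^s}\to C_2$ splits, the embedding problem over $K(t)$ that it poses for the quadratic extension $K(\sqrt u)/K(t)$, with $u=d_1+(d_2-d_1)t$ interpolating between two prescribed square classes, is solvable, with a solution supported on the $K$-rational function field $K(\sqrt u)$ that one may take anti-invariant; a Hilbert-irreducibility argument then yields a single $D_{2^s}$-extension of a $K$-rational function field whose $K$-specializations include $D_{2^s}$-extensions with any two prescribed quadratic subfields $F_1,F_2$. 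Hence it suffices to prove the statement
\[(\star)\qquad \text{any two } D_{2^s}\text{-extensions of } K \text{ with the same quadratic subfield are } R\text{-equivalent.}\]

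Under the dictionary above, $(\star)$ asks that any two anti-invariant $C_{2^s}$-extensions of $F=K(\sqrt d)$ be joined by a chain of anti-invariant $C_{2^s}$-extensions of $F(t)$. The heart of the argument is to rerun the analysis behind Theorem~\ref{thm:2-power-cyclic} over the number field $F$, restricted to this anti-invariant locus. That analysis attaches to a pair of $C_{2^s}$-extensions of $F$ an obstruction to $R$-equivalence, which it identifies with a class in $\Br(F(\mu_{2^s})/F)$ modulo the decomposable subgroup $\Br(E_F/F)+\Br(F(\sqrt{-1})/F)$, where $E_F\subseteq F(\mu_{2^s})$ is the subfield fixed by $\zeta\mapsto\zeta^{-1}$. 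The new ingredient is that an anti-invariant character $\Gamma_F\to C_{2^s}$ has trivial corestriction to $\Gamma_K$, so by the projection formula the associated obstruction class lies in $\ker(\cores_{F/K})$. A place-by-place computation over the number field $F$ — where the Brauer groups of the completions sit inside $\mQ/\zz$ and both $\ker(\cores_{F/K})$ and the decomposable subgroup are explicit, the decisive point at the primes above $2$ being that $F(\sqrt{-1})$ splits the corestriction-trivial classes — shows that $\ker(\cores_{F/K})\cap\Br(F(\mu_{2^s})/F)$ is contained in $\Br(E_F/F)+\Br(F(\sqrt{-1})/F)$; hence the obstruction vanishes. Keeping the $\Gal(F/K)$-action throughout makes the connecting chain produced by the explicit parametric set of Theorem~\ref{thm:2-power-cyclic} anti-invariant — so it descends to a chain of $D_{2^s}$-extensions of $K(t)$ — while a Hilbert-irreducibility and Grunwald--Wang argument over $F$, performed compatibly with conjugation, arranges the correct specializations and keeps each link surjective. (The cases $s=2,3$ are also settled by the known generic extensions for $D_4$ and $D_8$.)

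I expect the decisive difficulty to be precisely the local-global step above: the inclusion $\ker(\cores_{F/K})\cap\Br(F(\mu_{2^s})/F)\subseteq\Br(E_F/F)+\Br(F(\sqrt{-1})/F)$. This is where the number-field hypothesis is indispensable — over a general base the quotient measured by Theorem~\ref{thm:2-power-cyclic} need not be annihilated by anti-invariance — and it is also where the well-known pathologies at the prime $2$ (the Wang special case of Grunwald's theorem, and Tignol's analysis of the decomposable subgroup) enter; the extra room afforded by the dihedral rather than cyclic structure is exactly what lets one build the connecting families with the required behaviour above $2$. A secondary technical point is the Hilbert-irreducibility bookkeeping in the reduction to $(\star)$, which must be arranged so that the interpolating specializations avoid the relevant thin sets while still realizing the two prescribed quadratic subfields.
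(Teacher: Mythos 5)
Your reduction to $(\star)$ is where the argument breaks, and it breaks against the paper's own Theorem \ref{thm:2-power}. As you interpret it, $(\star)$ asks that any two anti-invariant $C_{2^s}$-extensions of $F=K(\sqrt d)$ be joined by a chain of anti-invariant $C_{2^s}$-extensions of $F(t)$ with the \emph{constant} quadratic extension $F(t)/K(t)$; this is precisely the assertion that the fiber $\HG^1(K,C_{2^s}^{(d)})$ is $R$-trivial for the constant twist. Theorem \ref{thm:2-power} computes this set of $R$-classes as the quotient of $\Br(E[\sqrt{-d}]/K)$ by $\Br(E/K)+\Br(K[\sqrt{-d}]/K)$, which has order $2^{m-1}$ with $m$ the number of primes $\fp$ where $K_\fp(\eta_s,\sqrt{-d})/K_\fp$ is noncyclic --- and the introduction stresses this can be arbitrarily large as $d$ varies, already over $K=\mQ$. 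So no argument can establish $(\star)$ in the form your strategy requires. Your proposed rescue, the inclusion $\ker(\cores_{F/K})\cap\Br(F(\mu_{2^s})/F)\subseteq\Br(E_F/F)+\Br(F(\sqrt{-1})/F)$, is also false: take $K=\mQ$, $F=\mQ(\sqrt{17})$, $s=3$, so that $2$ splits in $F$ into $w_1,w_2$ with $F_{w_i}\cong\mQ_2$ and $\mQ_2(\mu_8)/\mQ_2$ noncyclic of degree $4$; the global class $\beta$ with $\Inv_{w_1}(\beta)=1/4$, $\Inv_{w_2}(\beta)=3/4$ and trivial invariants elsewhere lies in $\Br(F(\mu_8)/F)$ and has $\cores_{F/\mQ}(\beta)=0$, yet its image under the map $I$ of Lemma \ref{lem:local} is $(1/2,1/2)\neq 0$, so it is not decomposable. (A further, secondary, issue is that the obstruction relevant to descending a chain from $F(t)$ to $K(t)$ lives in a quotient of $\Br(E[\sqrt{-d}]/K)$, not of $\Br(F(\mu_{2^s})/F)$; restriction to $F$ loses information.)

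The ingredient you are missing is that the quadratic subfield must itself be deformed along the family. The paper's proof fixes the class $\beta$ with quadratic resolvent $K[\sqrt a]$, represents its $R$-obstruction by a cyclic algebra $(E'/K,\sigma,b)$ with $E'=K(\eta_{s_0+1})$ (possible because the generators of the obstruction group are split by $E'$, by Lemma \ref{lem:local} and Chebotarev), and then chooses $p(t)$ with $p(0)=-1$, $p(1)=a$ and a divisor $q(t)$ with $q(0)=1$, $q(1)=b$. The class $\alpha(t)=(E'(t)/K(t),\sigma,q(t))$ is shown, via the genus-zero Faddeev argument, to be split by $E(t,\sqrt{-p(t)})$, hence lifts to a class in $\HG^1(K(t),C_{2^s}^{(p(t))})$ whose specializations at $t=0$ and $t=1$ land in the trivial and the prescribed $R$-class respectively --- the twisted group varies with $t$, which is exactly what lets one escape the nontrivial quotient that obstructs your fixed-fiber chains. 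Your interpolation $u=d_1+(d_2-d_1)t$ appears only in the easy part of the paper's argument (connecting the quadratic resolvents and the distinguished sections); the hard part is connecting a class to the distinguished section \emph{within} its fiber, and that is where the moving twist is indispensable.
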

Although the proof is given only for dihedral groups, we expect our approach, described below, to carry over to certain other actions, 
cf.\ Remark \ref{rem:noncyclic}, yielding: 
\begin{problem}
    Let $G$ be a finite solvable group and $K$ a number field. Is $\HG^1(K,G)/R$ finite?
    In particular, is finiteness preserved under split extensions with abelian kernel?
\end{problem}

In contrast to the $R$-triviality in Theorem \ref{thm:main}, when $C$ is allowed to be noncyclic abelian,  we give examples where  $\#\HG^1(K,H)/R$ is $R$-trivial but  $\HG^1(K,C\rtimes H)/R$ contains new nontrivial classes, see \S\ref{sec:example2}. This suggests that a bound on $\HG^1(K,G)/R$ would depend on the Brauer-Manin obstruction and weak approximation defect, cf.\ \cite[\S 7]{CTS2}. 

Our approach considers $R$-equivalence on $A$-torsors in $\HG^1(K,A)$, for finite abelian $K$-groups $A$, that is, finite abelian  
group schemes over $K$. Such $K$-groups are equipped with a (profinitely) continuous action of $\Gamma_K$,  corresponding to $(A\rtimes H)$-Galois extensions of $K$, where $H\leq \Aut(A)$ is the image of the action of $\Gamma_K$ on $A$, and the corresponding $(A\rtimes H)$-extensions contain a fixed $H$-extension defined by the surjection $\Gamma_K\ra H$, see \S\ref{sec:setup}. 


Our first observation relates $R$-equivalence on $\HG^1(K,A)$ to $R$-equivalence on algebraic tori, a theory developed by Colliot--Th\'el\`ene--Sansuc \cite{CTS2}. We first observe that the finiteness of $R$-equivalence classes on tori over finitely generated fields $K$ \cite[\S5, Cor.\ 2]{CTS2} implies that of $\HG^1(K,A)$, 
and  furthermore provides a parametrizing set 
\footnote{The action of $\Gamma_{K(t_1,\ldots,t_d)}$ on $A$ is taken to factor through that of $\Gamma_K$ to ensure their  compatibility.}:
\begin{observ}
    Let $K$ be a finitely generated field, and $A$ a finite $K$-group of order prime to $\charak K$. Then the number $r$ of $R$-equivalence classes in $\HG^1(K,A)/R$ is finite, and  it is the minimal number of torsors over $K(t_1,\ldots,t_d)$ needed to densely parametrize $\HG^1(K,A)$,  
    for all sufficiently large\footnote{Clearly such torsors may not exist for small values of $d$.} $d\in\mathbb N$.
\end{observ}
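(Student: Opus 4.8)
The plan is to relate $\HG^1(K,A)$ to the $R$-equivalence theory of algebraic tori via a flasque resolution, and then invoke the finiteness of $R$-equivalence on tori over finitely generated fields from \cite[\S5, Cor.\ 2]{CTS2}. First I would choose a surjection of $\Gamma_K$-modules $P\to \hat A$ from a permutation module $P$ (dual to an induction from open subgroups fixing the action on $A$); dualizing gives an exact sequence of $K$-groups $1\to A\to Q\to S\to 1$ where $Q$ is a quasi-trivial torus and $S$ is a torus. Taking Galois cohomology and using $\HG^1(K,Q)=0$ (Hilbert 90 for quasi-trivial tori) yields $\HG^1(K,A)\cong \coker(Q(K)\to S(K))$, which is a quotient of $S(K)$. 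Since $S(K)/R$ is finite over a finitely generated field, and the map $S(K)\to \HG^1(K,A)$ is compatible with specializations, I would deduce that $\HG^1(K,A)/R$ is finite; here one must check that $R$-equivalence on $\HG^1(K,A)$ as defined via $K(t)$-specializations is the image of $R$-equivalence on $S(K)$, which follows because a rational curve in $S$ maps to a $K(t)$-point of the classifying object, i.e.\ an element of $\HG^1(K(t),A)$ interpolating the two specializations.

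Next, for the parametrization statement, I would argue both directions. For the upper bound (that $r$ torsors over $K(t_1,\dots,t_d)$ suffice for $d$ large), pick representatives $\alpha_1,\dots,\alpha_r$ of the $R$-classes; for each $i$, I would produce a versal-type torsor $\gamma_i\in\HG^1(K(t_1,\dots,t_d),A)$ whose specializations hit all of the class of $\alpha_i$ densely. The natural construction is to use the versal $A$-torsor (or equivalently a model of $\Bd A$), whose rational points over $K$ are Zariski-dense in realizing $\HG^1(K,A)$ by \cite{BR}, and twist so that the generic fiber lies in the class of $\alpha_i$; the density in every open subset of a model is where one invokes the versality/weak-approximation-type input, together with the fact that within a single $R$-class, any $K$-point is reachable. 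For the lower bound (that fewer than $r$ cannot work), note that if $\{\gamma_1,\dots,\gamma_m\}$ densely parametrizes $\HG^1(K,A)$ with $m<r$, then by specialization each $\gamma_j$ lands, after specialization, in a single $R$-class (since a $K(t_1,\dots,t_d)$-point specialized along a rational curve gives $R$-equivalent $K$-points, and $K(t_1,\dots,t_d)$ is itself generated by iterated rational curves), so the images cover at most $m<r$ classes, contradicting that every class is hit.

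The main obstacle I expect is the lower bound's rigidity claim: showing that all specializations of a \emph{fixed} $\gamma_j\in\HG^1(K(t_1,\dots,t_d),A)$ to $K$-points lie in one $R$-equivalence class. This requires that any two $K$-points of the model $X$ of $K(t_1,\dots,t_d)$ can be connected by a chain of rational curves over $K$ passing through points where $\gamma_j$ extends, so that the specializations at the endpoints are $R$-equivalent via $\gamma_j$ restricted to those curves; handling the locus where $\gamma_j$ is not defined (where specialization is not literally given by restriction) needs care, presumably by a spreading-out argument showing $\gamma_j$ extends to an $A$-torsor over an open $U\subseteq X$ with $X\setminus U$ of codimension $\geq 2$ after possibly enlarging, or by working birationally. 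A secondary subtlety is verifying that the notion of dense parametrization is insensitive to the choice of model $X$ for $F=K(t_1,\dots,t_d)$, which I would address by a standard birational-invariance argument. Once these geometric inputs are in place, the counting argument matching $r=\#\HG^1(K,A)/R$ to the minimal cardinality is formal, and combined with the torus computation it also recovers the explicit value of $r$.
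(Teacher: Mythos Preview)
Your finiteness argument and your lower bound are essentially correct and align with the paper. The lower bound is in fact the easy direction, contrary to where you locate the difficulty: any two $K$-points of an open $U\subseteq\mathbb{A}^d$ lie on a line whose intersection with $U$ is a dense open containing both, and restricting $\gamma_j$ to that line exhibits the two specializations as directly $R$-equivalent. No codimension-$2$ purity or delicate birational bookkeeping is needed here precisely because the base is already an open in affine space.

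The genuine gap is in your upper bound. You propose to take the versal $A$-torsor and ``twist so that the generic fiber lies in the class of $\alpha_i$,'' but the versal torsor lives over $X$ (in your own notation, over the torus you call $S$), and this base is \emph{not} rational in general---that is precisely the obstruction under study. Twisting changes the torsor, not the base, so nothing in your sketch ever produces an element of $\HG^1(K(t_1,\ldots,t_d),A)$. What is missing is a \emph{second} use of the flasque machinery, beyond the black-box citation for finiteness. The paper (Corollary~\ref{cor:param}) takes $T=P/A$ and a flasque resolution $1\to S\to Q\to T\to 1$ with $Q$ quasitrivial, hence rational. The identity $T(K)/R\cong\HG^1(K,S)\cong T(K)/\Image(Q(K)\to T(K))$ then says that the $R$-equivalence classes in $T(K)$ are exactly the $r$ cosets $t_i\cdot\pi(Q(K))$. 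Pulling back the $A$-torsor $P\to T$ along the $r$ translated maps $\pi_i\colon Q\to T$, $q\mapsto t_i\pi(q)$, yields $r$ torsors over the \emph{rational} variety $Q$ whose $K$-specializations jointly and densely cover $\HG^1(K,A)$. Your argument invokes the finiteness of $T(K)/R$ from \cite{CTS2} as a black box, which suffices for the first assertion, but the upper bound genuinely requires opening that box: it is the flasque resolution, not a twist of the versal torsor, that supplies the rational parametrizing space.
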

The observation follows directly from  Corollary \ref{cor:param}. 
Note that since a single parametrizing extension and hence a generic polynomial cannot exist if $r > 1$, this approach  yields the missing parametrizing extensions when $\HG^1(K,A)$ is not $R$-trivial.

For $A=C_{2^s}$, we compute the corresponding tori explicitly and deduce Theorem \ref{thm:2-power-cyclic} from the above approach. We combine this approach with a theorem of Endo--Miyata concerning the $R$-triviality of tori with Sylow-cyclic splitting fields to deduce that $\HG^1(K,A)$ is $R$-trivial for cyclic $K$-groups $A$ and furthermore that the corresponding field of invariants is stably rational, see Proposition \ref{prop:cyclic-odd}. This is the key to proving Theorem \ref{thm:semisimple}.

To prove Theorem \ref{thm:main}, we consider $2$-power cyclic $K$-groups $A$. 
When the $\Gamma_K$-action factors through $H=C_2=\Gal(K[\sqrt{a}]/K)$ and acts on $C$ by inversion,  a class $\alpha\in\HG^1(K,A)$ corresponds to a $D_{2^s}$-Galois extension containing $K[\sqrt{a}]$.  
We then determine 
$\HG^1(K,A)/R$ similarly to Theorem \ref{thm:2-power-cyclic}: 
\begin{theorem}\label{thm:2-power}
Let $K$ be a field of $\charak K \neq 2$, let $s\geq 3$ be an integer, $a\in K^\times$  nonsquare, and $E\subseteq K(\mu_{2^s})$ the subfield fixed by conjugation. 
Let $A^{(a)}=C^{(a)}_{2^s}$, $s\geq 3$ be  the cyclic  $K$-group of order $2^s$ equipped with the $\Gamma_K$-action  through $\Gal(K[\sqrt{a}]/K)$ by inversion. 
Then  $\HG^1(K,A^{(a)})/R$ is in bijection with the quotient of $\Br(E[\sqrt{-a}]/K)$ by $\Br(E/K)+\Br(K[\sqrt{-a}]/K)$.  
\end{theorem}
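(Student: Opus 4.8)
The plan is to run the same argument used for Theorem~\ref{thm:2-power-cyclic}, with the cyclotomic $\Gamma_K$-action replaced throughout by its twist by the quadratic character $\chi\colon\Gamma_K\twoheadrightarrow\Gal(K[\sqrt a]/K)\cong\{\pm1\}$. First I would pass from $A^{(a)}$ to a torus: the $K$-group $A^{(a)}$ is finite \'etale with geometric points $\Z/2^s$, so its Cartier dual $\widehat{A^{(a)}}$ is, as a $\Gamma_K$-module, $\mu_{2^s}$ with its cyclotomic action multiplied by $\chi$; in particular this action factors \emph{faithfully} through $\Gal(E[\sqrt{-a}]/K)$, which it identifies with a subgroup of $(\Z/2^s)^\times$ (and inside $E[\sqrt{-a}]$ the subfields $E$ and $K[\sqrt{-a}]$ are the fixed fields of the conjugation $-1$ and of the cyclic subgroup $\langle5\rangle$, exactly as $E$ and $K(\sqrt{-1})$ sit inside $K(\mu_{2^s})$ in the untwisted case). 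Dualizing a surjection of permutation lattices onto $\widehat{A^{(a)}}$ produces a closed embedding $A^{(a)}\hookrightarrow T_0$ into a quasi-trivial $K$-torus, and one sets $T_1:=T_0/A^{(a)}$.

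Since $\HG^1(K,T_0)=0$ by Shapiro's lemma and Hilbert~90, Corollary~\ref{cor:param} identifies $\HG^1(K,A^{(a)})/R$ with $T_1(K)/R$; choosing then a flasque resolution $1\to S\to Q\to T_1\to1$ of $T_1$ with $Q$ quasi-trivial and $S$ flasque, \cite{CTS2} gives $T_1(K)/R\cong\HG^1(K,S)$. It therefore remains to identify $\HG^1(K,S)$ — equivalently, to compute a flasque resolution of the $\Gamma_K$-lattice $\widehat{T_1}$ sitting in the exact sequence $0\to\widehat{T_1}\to\widehat{T_0}\to\widehat{A^{(a)}}\to0$ with $\widehat{T_0}$ permutation — and here I would redo the $C_{2^s}$-lattice bookkeeping of Theorem~\ref{thm:2-power-cyclic} under the $\chi$-twist. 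The three permutation summands that enter are those induced from the fixed fields, inside $E[\sqrt{-a}]$, of the trivial subgroup, of the conjugation $-1$, and of $\langle5\rangle\le(\Z/2^s)^\times$, namely $E[\sqrt{-a}]$, $E$ and $K[\sqrt{-a}]$; the twist moves these from $K(\mu_{2^s})$, $E$, $K(\sqrt{-1})$ to $E[\sqrt{-a}]$, $E$, $K[\sqrt{-a}]$, the appearance of $-a=(-1)\cdot a$ being precisely $\sqrt{-1}$ twisted by $\sqrt a$.

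Feeding the resulting flasque torus $S$ through the exact sequences $1\to\mG_m\to\Res_{L/K}\mG_m\to(\Res_{L/K}\mG_m)/\mG_m\to1$, which yield $\HG^1\!\bigl(K,(\Res_{L/K}\mG_m)/\mG_m\bigr)\cong\Br(L/K)$ for each of $L=E[\sqrt{-a}],E,K[\sqrt{-a}]$, should unwind $\HG^1(K,S)$ into the quotient
\[
\Br(E[\sqrt{-a}]/K)\big/\bigl(\Br(E/K)+\Br(K[\sqrt{-a}]/K)\bigr),
\]
as claimed; in the degenerate case $\sqrt{-a}\in E$ all three fields coincide and the quotient is trivial, consistent with the formula, and the hypothesis $s\geq3$ is exactly what forces $(\Z/2^s)^\times$ to be non-cyclic so that $E$, $K[\sqrt{-a}]$, $E[\sqrt{-a}]$ are genuinely three distinct layers.

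The main obstacle is the lattice computation above: producing the explicit flasque resolution of $\widehat{T_1}$, checking that $S$ is flasque, and matching $\HG^1(K,S)$ with the stated Brauer quotient — all while tracking the $\Gamma_K$-module structure under the twist, where the interaction of $\chi$ with the cyclotomic action (rather than with a full cyclotomic torus as in Theorem~\ref{thm:2-power-cyclic}) is what must be handled carefully. A secondary point is to confirm that the isomorphism $\HG^1(K,A^{(a)})/R\cong T_1(K)/R$ is induced by honest specialization of $A^{(a)}$-torsors over $K(t)$ with the action pulled back from $\Gamma_K$, so that the two notions of $R$-equivalence agree; this is precisely what Corollary~\ref{cor:param} is set up to supply, so once its hypotheses are verified for $T_0\to T_1$ the argument closes.
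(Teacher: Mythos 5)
Your outline follows essentially the same route as the paper: the twisted dual $\widehat{A^{(a)}}$ does factor through $\Gal(E[\sqrt{-a}]/K)$ exactly as you say, and the paper likewise embeds $A^{(a)}$ into a quasitrivial torus (the explicit $\Res_{E[\sqrt{-a}]/K}\mG_m$ of Lemma \ref{lem:embedding}, whose quotient map has a cyclic kernel with harmless odd part), builds the explicit flasque resolution of Lemma \ref{lem:flasque}, identifies $\HG^1(K,S)$ with the group of special projective conorms in Proposition \ref{prop:r-equiv-cosets}, and converts that to the Brauer quotient in Proposition \ref{prop:projective} via a Lyndon--Hochschild--Serre argument. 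The lattice computation you flag as the ``main obstacle'' is indeed where essentially all of the paper's work lies (and the reduction $\HG^1(K,A^{(a)})/R\cong T(K)/R\cong\HG^1(K,S)$ rests on Lemmas \ref{lem:BG-T} and \ref{lem:BG-r-equiv} rather than Corollary \ref{cor:param}), but your plan is structurally the correct one.
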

In particular, as mentioned above there is a densely parameterizing set of cardinality $r=\#\HG^1(K,A^{(a)})/R$ for $\HG^1(K,A^{(a)})$ over $K(t_1,\ldots,t_d)$, for some $d\in\mathbb N$. 
Over number fields $K$, the quotient subgroup is explicitly as Tignol's decomposable subgroup of $\Br(E[\sqrt{-a}]/K)$, cf.\ Remark \ref{rem:noncyclic}.(3). The number 
$r$ of such $R$-equivalence classes is then  
 $2^{m-1}$, where $m=m(K,s,a):=\#\{\fp\text{ prime of }K\,|\,K_\fp(\eta_s,\sqrt{-a})/K_\fp\text{ is noncyclic}\}$, by Lemma \ref{lem:local} and Remark \ref{rem:p-adic}. A similar description applies to Theorem \ref{thm:2-power-cyclic}.

Much of the difficulty in proving Theorem \ref{thm:main} arises from the following consequence of Theorem \ref{thm:2-power}: 
As one varies through the quadratic extensions 
$K[\sqrt{a}]/K$, the number of $R$-equivalence classes on $\HG^1(K,A^{(a)})$ may be arbitrarily large! 
The proof of Theorem \ref{thm:main} then varies the Galois action on $A$ through specializations $t\mapsto t_0\in K$  of a quadratic extension $K(t,\sqrt{p(t)})/K(t)$, $p(t)\in K[t]$, 
see \S\ref{sec:dihedal}. 

Theorem \ref{thm:2-power} also yields examples of $p$-adic fields with odd $p$ for which $\HG^1(K,A^{(a)})$ is not $R$-trivial,   answering  a question of Colliot--Th\'el\`ene from the end of \cite{CT2}, when the group is allowed to be a nonconstant $K$-group, see Example  \ref{exam:CT}.

\subsubsection*{Acknowledgements} 
The authors were supported by the Israel Science Foundation, grant no.~353/21. 

\section{Preliminaries}
\subsection{Setup}\label{sec:setup}
{\it $K$-groups.} Let $K$ be a field with separable closure $\sep K$ and absolute Galois group $\Gamma_K$. 
We shall consider \`etale algebra extensions of $K$ and their Galois theory following \cite[Chp.\ III]{DI}. 
Throughout the paper, a $G$-Galois extension of $K$ is an \'etale algebra which is Galois over $K$ of group $G$. For a fixed polynomial $f\in K[t]$,  the ring $K[\alpha]$, where $\alpha$ is a root of $f$,  denotes the algebra $K[x]/(f)$, so that  $K[\sqrt{a}]:=K[x]/(x^2-a)$. 

We shall call 
group schemes over $K$ for short $K$-groups. We note that torsors for a finite $K$-group $A$ are themselves finite $K$-groups (as they are \'etale locally isomorphic to $A$), and so can be written in the form $\Spec L$ for $L/K$ a finite \'etale $K$-algebra. More explicitly, via Galois descent, if we are given a (profinitely continuous) cocycle $\alpha: \Gamma_K \to A(\sep K)$ we can describe the corresponding $K$-algebra $L_\alpha$ as the $\Gamma_K$-invariants in the algebra $\prod_{a \in A(\sep K)} \sep K$ via the action $\sigma \cdot_\alpha x \equiv \alpha(\sigma) \sigma(x)$, where $\sigma(x)$ denotes the coordinatewise action of the absolute Galois group and where $\alpha(\sigma) \in A(\sep K)$ acts on a tuple in $\prod_{a \in A(\sep K)} \sep K$ by permuting the entries via the action of $A(\sep K)$ on itself. For a (constant) finite group $G$ with a subgroup $H < G$,  homomorphisms $\alpha\in \HG^1(K,G)=\Hom(\Gamma_K,G)/\sim$ with image $H$, up to equivalence $\sim$ by conjugation in $G$, correspond to \'etale algebras $\bigoplus_{H\backslash G} E $, where $E=\oline K^{\ker\alpha}$ is the fixed field of $\ker\alpha\leq\Gamma_K$ and $\Gamma_K$ acts by  $\sigma\cdot  (x_\tau)_{\tau\in H\backslash G}= (\sigma( x_{\tau\alpha(\sigma))})_{\tau\in H\backslash G}$. 


For a finite abelian $K$-group $A$, the action of $\Gamma_K$ factors through a finite Galois group $H=\Gal(L/K)$. Furthermore, there is a well known correspondence between (profinitely continuous) cocycles $\alpha\in \Zd^1(K,A):=\Zd^1(\Gamma_K,A)$ with solutions to the embedding problem $A\rtimes H\ra \Gal(L/K)$, that is, homomorphisms $\Gamma_K\ra A\rtimes H$ whose composition with the projection $A\rtimes H\ra H=\Gal(L/K)$ coincides with the restriction map $\Gamma_K\ra \Gal(L/K)$ \cite[Prop.\ 3.5.11]{NSW}. Moreover,  classes in $\HG^1(K,A):=\HG^1(\Gamma_K,A)$ are in one to one correspondence with equivalence classes of solutions, where equivalence is given by inner conjugation in $A\rtimes H$. Thus, classes in $\HG^1(K,A)$ correspond to $(A\rtimes H)$-extensions $M/K$ for which the restriction map $\Gal(M/K)\ra\Gal(L/K)$ coincides with the natural projection $A\rtimes H\ra H$.

\subsubsection*{Specialization} Suppose we have a finite $K(t)$-group $G$ over the rational function field $K(t)$, 
and a class  $\alpha\in \HG^1(K(t),G)$. 
If $G$ extends to an \'etale group scheme $\til G$ 
over the local ring $K[t]_{(t_0)}$ and if $\alpha$ is the image of a class $\til \alpha \in \HG^1(K[t]_{(t_0)}, \til G)$, then we let $\til G_{t_0}$ denote the specialization of the group scheme to $t_0$ and we call the image $\alpha_{t_0}$ of $\til \alpha$ in $\HG^1(K, \til G_{t_0})$ the {specialization} of $\alpha$ at $t_0$.

More generally, given a (smooth) $K$-scheme $Y$ and a $K$-group $G$, let $\HG^1(Y,G)$ denote the \`etale cohomolgy group. Torsors $P\ra Y$ correspond to classes in $\ZG^1(Y,G)$, cf.\ \ref{sec:twists}, and their specializations $P|_y$ at points $y\in Y(K)$ are classes in  $\ZG^1(K,G)$. The specialization map $|_{y}$  induces a map $\HG^1(Y,G)\ra \HG^1(K,G)$ on cohomology classes. 
\begin{defn}\label{def:param}
Let $G$ be an algebraic group over $K$, let $Y$ be a $K$-scheme, and  let $P_i\ra Y$, $i=1,\ldots,r$ be $G$-torsors. 1) We say that $P_i$ have {\it dense $K$-specializations in} $S\subseteq \HG^1(K,G)$ if for every (Zariski) dense open set $U \subset Y$ and every $G$-torsor $P_0\in S$, there exists $y \in U(K)$ such that $P_0 \cong P_i|_y$ for some $i$. \\  
2) Given a function field $F$ over $K$, we say $\gamma_1,\ldots,\gamma_r\in \HG^1(F,G)$ densely parametrize $S\subseteq \HG^1(K,G)$ if  there is an irreducible $K$-scheme $Y$ with function field $F$ such that $\gamma_1,\ldots,\gamma_r$ extend to classes over $Y$ with dense $K$-specializations in $\HG^1(K,G)$.  
\end{defn}

\noindent {\it Tori.} For an algebraic torus $T$ defined over $K$, let $\X(T)$ be the character group on $T$, that is, the $\Gamma_K$-module of all morphisms $f:T_{\sep K} \ra (\mG_m)_{\sep K}$ with the action $(\mbox{}^\sigma f)(x)=\sigma f(\sigma^{-1}x)$. 
Let $\X(T)^*=\Hom(\X(T),\mathbb Z)$ denote the dual lattice.
The dual torus, whose character lattice is the dual (cocharacter) lattice of $T$, is denoted $\hat T$.
More generally, for a (smooth diagonalizable) group scheme $G$,  write $X(G) = \Hom_{alg.gp}(G, \mathbb G_m)$ for the dual (character) group scheme. Note that for $G$ finite of order dividing $n$, we have $X(G) = \Hom_{alg.gp}(G, \mu_n)$.

For a finite separable extension $L/K$, and an algebraic torus $T$ defined over $L$, the (Weil) restriction of scalars $\Res_{L/K} T$ is a torus defined over $K$ for which  $\Res_{L/K} T(K') = T(L \otimes_K K')$ for every extension $K'/K$. In particular,  $(\Res_{L/K} T)_{\sep K} \cong  (T_{\sep K})^{[L:K]}$. In the case $T = \mG_{m, L}$, we then find $(\Res_{L/K} T)_{\sep K}$ is isomorphic to $\mathbb G_m^d$, where $d=[L:K]$, equipped with the action $\sigma (x_1,\ldots,x_d)= (\sigma(x_{\sigma^{-1}(1)}), \ldots,\sigma(x_{\sigma^{-1}(d)}))$. Thus,  $\X(\Res_{L/K} T)$ is a permutational module of rank $d$; that is, it is a free abelian group of rank $d$ on which $\Gamma_K$ acts by permuting the coordinates. A {\it quasitrivial} torus is a product of finitely many tori, each isomorphic to some restriction of scalars of $\mathbb G_m$. 

We let $\Res^1_{L/K}\mG_m$ denote the subtorus of $\Res_{L/K} \mG_m$ consisting of norm $1$ elements, and recall
that its lattice  $\X(\Res^1_{L/K} \mG_m)$ is naturally identified with the quotient of the permutation module ${\mathbb Z}^{[L:K]}$ by the image of the diagonal map $\mathbb Z \to \mathbb Z^{[L:K]}$, $n \mapsto (n, \ldots, n)$. In particular, if $[L:K]=2$,  the generator $\sigma$ of $\Gal(L/K)$ acts on $X(\Res^1_{L/K}\mG_m)\cong \Z$ by inversion $\sigma\cdot a = -a$, $a\in \Z$, so that $\sigma\cdot\alpha = \sigma(\alpha)^{-1}$ for $\alpha\in \Res^1_{L/K}\mG_m$. 



\subsection{Twisting and fibers in nonabelian Galois cohomology} \label{sec:twists}
Let us review the notion of twisting to understand the fibers of exact sequences in nonabelian Galois cohomology. For an \'etale $K$-group scheme $A$, we think of a (right) $A$-torsor as a generalization of a Galois extension of $K$ in the following sense. 
Right $A$-torsors over $K$ are in bijection with \'etale algebras $L/K$ with actions by the group scheme $A$. In the case $A$ is a constant group scheme, these are precisely $A$-extensions. 
Concretely, this correspondence works by associating to an $A$-extension $L/K$ its spectrum $\Spec L$. 
This spectrum carries 
a right action of $A$ on the points over the separable closure $\Spec L(\sep K) = \Hom_K(L, \sep K)$ by precomposition.

From now on, until we say otherwise, all torsors are considered as right torsors. Recall that an $A$-torsor $P$ is determined by $P(\sep K)$, the points of $P$ over a separable closure together with the (left) action of the absolute Galois group $\Gamma_K$, and together with an action of $A$ on the right. The latter amounts to a $\Gamma_K$-invariant action $P(\sep K) \times A(\sep K) \to P(\sep K)$ which is simply transitive. Choosing $x \in P(\sep K)$ gives rise to a map $\phi = \phi_{P, x}: \Gamma_K \to A(\sep K)$ determined by $\sigma(x) = x\phi(\sigma)$. We have
\[
x\phi(\sigma\tau) = \sigma(\tau(x)) = \sigma(x\phi(\tau) ) = \sigma(x)\sigma(\phi(\tau)) = x \phi(\sigma)\sigma(\phi(\tau)),
\] 
giving us $\phi(\sigma\tau) = \phi(\sigma)\sigma(\phi(\tau))$.
Denote by $\HG^1(K,A)$ the Galois cohomology pointed set $\HG^1(\Gamma_K,A(\sep K))=\Zd^1(\Gamma_K,A(\sep K))/\sim$, where the equivalence relation is given by an action of $A(\sep K)$ on crossed homomorphisms defined by $(a\cdot \phi)(\sigma) = a\phi(\sigma)\sigma(a)^{-1}$.  The elements of $\Zd^1(\Gamma_K, A(\sep K))$ are then in bijection with $A$-torsors $P$ together with a fixed identification $P(\sep K) = A(\sep K)$ corresponding to a choice of a $\sep K$-point, and the elements of $\HG^1(\Gamma_K, A(\sep K))$ are in bijection with isomorphism classes of $A$-torsors.

Recall that given an exact sequence of $K$-groups
$$
\xymatrix{
1 \ar[r] &  A\ar[r] &  B\ar[r] & C \ar[r] & 1,
}$$
by which we mean that we have an isomorphism of $A$ with a normal subgroup (which we think of as an identification $A \triangleleft B$) and an isomorphism $B/A \cong C$, we obtain a long exact sequence of pointed sets
\[
\xymatrix@C=.5cm @R=0.5cm{
1 \ar[r] & A(K) \ar[r] & B(K) \ar[r] & C(K) \ar[d] \\ 
& & & \HG^1(\Gamma_K, A(\sep K)) \ar[r] &
\HG^1(\Gamma_K, B(\sep K)) \ar[r] &
\HG^1(\Gamma_K, C(\sep K)) &
}
\]
In particular, if the map $B(K) \to C(K)$ is surjective, we can identify $\HG^1(\Gamma_K, A(\sep K))$ with the pointed kernel of $\HG^1(\Gamma_K, B(\sep K)) \to 
\HG^1(\Gamma_K, C(\sep K))$.

We now examine the fibers of the map $\HG^1(\Gamma_K, B(\sep K)) \to
\HG^1(\Gamma_K, C(\sep K))$ using the method of twisting, following \cite[\S III.1.6]{Gir}. In the context of the above exact sequence, given a $B$-torsor $P$, corresponding to a cocycle $\alpha \in \Zd^1(\Gamma_K, B)$, 
consider the image of the cocycle $\alpha$ in $\Zd^1(\Gamma_K, \Aut(B))$ with respect to the map $B \to \Aut(B)$ given by conjugation. 
By Galois descent, this image induces a $K$-group scheme $\sub\alpha B$. The left action of $\sub\alpha B$ and right action of $B$ on $P$ give $P$ the structure of a bitorsor. The operation $Q \mapsto Q \times_{\sub\alpha B} P$ of  contracted product with $P$ then gives a bijection between isomorphism classes of $\sub\alpha B$-torsors and $B$-torsors. Similarly, we may consider the image of $\alpha$ in $\Zd^1(\Gamma_K, \Aut(C))$ via the composition $B \to C \to \Aut(C)$ and the corresponding $K$-group scheme $\sub\alpha C$. Finally, as $A$ is normal in $B$,  consider the image of $\alpha$ in $\Zd^1(\Gamma, \Aut(A))$ given by the map $B \to \Aut(A)$ induced by conjugation, and the corresponding group scheme $\sub\alpha A$.
One may check that we thereby obtain a ``twisted'' exact sequence 
\[1 \ra  \sub\alpha A \ra \sub\alpha B \ra \sub\alpha C \ra 1,\]
and that contracted products induce a commutative diagram with vertical bijections:
\[\xymatrix{
\HG^1(\Gamma_K,  \sub\alpha A) \ar[r] & \HG^1(\Gamma_K, \sub\alpha B) \ar[r] \ar[d]^{\times_{\sub\alpha B} P} &
 \HG^1(\Gamma_K, \sub\alpha C) \ar[d]^{\times_{\sub\alpha C} P/A} 
\\
\HG^1(\Gamma_K, A) \ar[r] & \HG^1(\Gamma_K, B) \ar[r] & \HG^1(\Gamma_K, C).  
}\]
Here, $P/A$ denotes the contracted product $P \times_B (B/A) = P \times_B C$, which is a torsor corresponding to the image of the cochain $\alpha$ in $\Zd^1(\Gamma_K, C)$. In particular, given our $B$-torsor $P$, we may identify the fiber $F_{P/A}$ of the class of $P/A$ with respect to the map $\HG^1(\Gamma_K, B) \to \HG^1(\Gamma_K, C)$ with the pointed kernel of the map $\HG^1(\Gamma_K, \sub\alpha B) \to \HG^1(\Gamma_K, \sub\alpha C)$, which gives us an exact sequence of pointed sets:
\begin{equation}\label{equ:fiber-twist} \sub\alpha B(K) \to \sub\alpha C(K) \to \HG^1(\Gamma_K,  \sub\alpha A) \to F_{P/A} \to 1.
\end{equation}

\begin{example}\label{exam:twist-abelian}
Consider the special case that $B$ is a constant group and is expressed as a semidirect product $B = A \rtimes C$,  with $C$ abelian. Given a $C$-torsor $Q$, the splitting map $s:C \to B$ gives a $B$-torsor $P \cong Q \times_C B$ which maps to $Q$ (i.e. $P/A = P \times_B A \cong Q$). Let $\beta$ be a cocycle corresponding to $Q$. (We will also write $\beta$ for the image of this cocycle in $\Zd^1(\Gamma_K, B)$). As $C$ is abelian, we find that as conjugation by $C$ is trivial on $C$, we have $\sub\beta C(K) = C(K) = C$ and the $s$-image of $C$ in $B$ is likewise preserved by conjugation, showing $C = \sub\beta C(K) \subset \sub\beta B(K)$ which shows that $\sub\beta B(K) \to \sub\beta C(K)$ is surjective and hence we obtain an idenfitication $F_Q \cong \HG^1(\Gamma_K, \sub\beta A)$, where $\sub\beta A$ is the group scheme obtained by the image of $\beta$ in $\HG^1(\Gamma_K, \Aut(A))$ via the map $C \to \Aut(A)$ induced by conjugation.
\end{example}

\subsection{Embedding into tori and algebraic groups}\label{sec:embed} To study R-equivalence on Galois extensions, we follow the theory of $R$-equivalence on tori developed by Colliot-Th\'el\`ene--Sansuc \cite{CTS2}. This is a general reference for the  rest of this section. 
\newcommand{\mbb}{\mathbb}
\newcommand{\ov}{\overline}

We first note that every finite abelian $K$-group $A$ whose order is not divisible by the characteristic of $K$ embeds into a quasitrivial torus. Indeed, to do so, we may choose a projection $X_P \to X(A)$ with kernel $L$ a torsion-free abelian group (another lattice). This gives a short exact sequence
\begin{equation}\label{equ:quasi-trivial} 
\xymatrix{
1\ar[r] & A\ar[r] &  P \ar[r] & T\ar[r] & 1,
}
\end{equation}
for tori $T,P$ such that $P$ is quasitrivial,   $X(T) = L$ and $X(P)=X_P$.
Note that $\HG^1(K,P)=1$ for a quasitrivial torus $P$. Thus \eqref{equ:quasi-trivial} 
 yields  an epimorphism 
$T(K)\ra \HG^1(K,A).$

Analogously, for a finite $K$-groups $G$, we may choose a closed embedding $G \to \mathcal G$ for $\mathcal G=\GL_n,\SL_n$ or another rational connected algebraic group for which $\HG^1(K,\mathcal G)=1$.  
Consider the nonabelian short exact sequences $1 \to G \to \mathcal G \to \mathcal G / G \to 1$.
The long exact sequences (combined with the triviality of $\HG^1(K, \mathcal G)$) 
yield a surjective map $\delta:(\mathcal G/G)(K) \to \HG^1(K, G)$. 
Writing $X = \mathcal G/G$, we find that for any rational point $x \in X(K)$, this map fits into a commutative diagram of \'etale cohomology groups
\[\xymatrix@C=.5cm{
\HG^0(X, X) \ar@{=}[r] & \Hom(X, X) \ar[d] \ar[r] & \HG^1(X, G) \ar[d] \\
\HG^0(K, X) \ar@{=}[r] & X(K) \ar[r]^-{\delta} & \HG^1(K, G) \ar[r] & 0,
}\]
where the vertical arrows correspond to restricting to the rational point $x \in X(K)$. In particular, the identity map in $\Hom(X, X)$ maps to the right to a $G$-torsor $P$ over $X$, which, when restricted to the rational point $x$ gives a torsor $P|_x$ in the class $\delta(x)$. As $\delta$ is surjective, this implies that every $G$-torsor over $K$ (or in fact over any field) is obtained as a specialization of the torsor $P$.

\begin{lemma}\label{lem:dense}
The above torsor $P$ over $X$  admits dense $K$-specializations in $\HG^1(K,G)$.
\end{lemma}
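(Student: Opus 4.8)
The plan is to show that the $G$-torsor $P \to X = \mathcal G/G$ constructed just above, whose fiber $P|_x$ over a rational point $x \in X(K)$ lies in the class $\delta(x)$, has the property that $\delta$ remains surjective even when restricted to an arbitrary dense open $U \subseteq X$. Concretely, fix a dense open $U \subset X$ and a class $[\overline P] \in \HG^1(K,G)$; we must produce $u \in U(K)$ with $P|_u \cong \overline P$. The key structural input is that $X = \mathcal G/G$ is a homogeneous space under the rational connected group $\mathcal G$, with $\mathcal G(K) \to X(K)$ having image exactly the fiber $\delta^{-1}(\text{trivial class})$; more usefully, twisting by a cocycle representing $\overline P$ replaces $X$ by a twisted form $\sub\alpha X = \mathcal G/\sub\alpha G$ (still a homogeneous space under $\sub\alpha{\mathcal G}$, which is again $\GL_n$ or $\SL_n$ up to inner twist, hence rational connected with trivial $\HG^1$), and under this twisting the fiber $\delta^{-1}([\overline P])$ corresponds to the image of $\sub\alpha{\mathcal G}(K) \to \sub\alpha X(K)$ (the trivial-class fiber of the twisted problem), by the twisting formalism of \S\ref{sec:twists}, equation \eqref{equ:fiber-twist}.

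First I would reduce to the trivial class: by twisting as in \S\ref{sec:twists} it suffices to show that for $X = \mathcal G/G$ with the standard torsor $P$, the preimage $\delta^{-1}(\text{trivial})$, which contains the image of $\mathcal G(K) \to X(K)$, meets every dense open $U(K)$. Second, I would observe that $\mathcal G$ is a rational variety (for $\mathcal G = \GL_n$ or $\SL_n$ this is classical; any inner twist of these is likewise $K$-rational since it is an interior form of a split rational group, or one may simply fix $\mathcal G = \GL_n$ from the outset so no twisting of $\mathcal G$ is needed up to the base change on $G$), hence $\mathcal G(K)$ is Zariski dense in $\mathcal G$, hence its image is dense in $X$ (the quotient map $\pi: \mathcal G \to X$ is dominant, indeed faithfully flat, so it carries dense sets to dense sets, and more precisely $\pi$ is open so $\pi(\mathcal G(K))$ is dense in $X$). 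Third, given the dense open $U \subset X$, density of $\pi(\mathcal G(K))$ produces $g \in \mathcal G(K)$ with $\pi(g) = u \in U(K)$; since $u$ lifts to a $K$-point of $\mathcal G$, the connecting map sends $u$ to the trivial class, i.e.\ $P|_u$ is trivial, which is exactly $[\overline P]$ after untwisting. Assembling these gives dense $K$-specializations in all of $\HG^1(K,G)$.

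The main obstacle is the bookkeeping in the twisting step: one must check that the twisted space $\sub\alpha X$ is again of the form $\sub\alpha{\mathcal G}/\sub\alpha G$ with $\sub\alpha{\mathcal G}$ rational and $\HG^1(K, \sub\alpha{\mathcal G}) = 1$, and that the twisted torsor over $\sub\alpha X$ pulls back to $P$ over a dense open (so that a dense open of $X$ corresponds to a dense open of $\sub\alpha X$, twisting being an isomorphism of $K$-varieties $X \cong \sub\alpha X$ over $\sep K$ that is defined over $K$). This is precisely the content of the commutative diagram with vertical bijections in \S\ref{sec:twists}, applied with $B = \mathcal G$, $A = G$, $C = \mathcal G/G$; the only genuinely new point is that $\sub\alpha{\GL_n}$ is an inner form of $\GL_n$, hence isomorphic to $\GL_1(D)$ for a central simple algebra $D$, which is still $K$-rational and satisfies $\HG^1(K, \GL_1(D)) = 1$ by a version of Hilbert 90 / the triviality of $\HG^1$ for $\GL_1$ of an Azumaya algebra. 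If one prefers to avoid even this, one can fix $\mathcal G = \SL_n$ and use that its inner twists $\SL_1(D)$ (norm-one units) are $K$-rational and have trivial $\HG^1$ by the Kneser--Bruhat--Tits / Hilbert 90 type vanishing for $\SL_1$ of a division algebra, or alternatively embed $G$ into a \emph{quasi-split} such group whose inner twists stay rational. I would present the $\GL_n$ version for cleanliness and remark on the general case.

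Finally, a brief closing remark: the statement and its proof specialize, when $G$ is abelian, to the quasitrivial-torus sequence \eqref{equ:quasi-trivial}, where $\mathcal G$ is replaced by a quasitrivial torus $P$ with $\HG^1(K,P) = 1$ and $X$ by the torus $T = P/A$; the same density argument (quasitrivial tori are rational, open image of $K$-points) then shows the corresponding $A$-torsor over $T$ has dense $K$-specializations, which is the form of the result used to deduce the Observation and Theorems \ref{thm:2-power-cyclic} and \ref{thm:2-power}.
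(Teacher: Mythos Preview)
Your argument is essentially correct, but it takes a detour through the twisting formalism that the paper avoids entirely. The paper's proof is more direct: given any class $[P_0]\in\HG^1(K,G)$, surjectivity of $\delta$ furnishes some $x\in X(K)$ with $\delta(x)=[P_0]$; the fibers of $\delta$ are precisely the $\mathcal G(K)$-orbits on $X(K)$; the orbit map $\mathcal G\to X$, $g\mapsto gx$ is surjective over $\sep K$ and hence dominant, so the preimage of the dense open $U$ is a nonempty open in $\mathcal G$; and since $\mathcal G$ is rational, $\mathcal G(K)$ is Zariski dense, producing $g\in\mathcal G(K)$ with $gx\in U(K)$ and $\delta(gx)=\delta(x)=[P_0]$. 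No twisting is needed.

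Your twisting reduction can be made to work, but some of the bookkeeping you flag as the ``main obstacle'' is in fact unnecessary. Because the cocycle $\alpha\in\Zd^1(K,G)$ becomes a coboundary in $\mathcal G$ (this is exactly $\HG^1(K,\mathcal G)=1$), the inner twist $\sub\alpha{\mathcal G}$ is $K$-isomorphic to $\mathcal G$ itself; there is no need to invoke $\GL_1(D)$ or $\SL_1(D)$ or their rationality. Concretely, writing $\alpha(\sigma)=g_0^{-1}\sigma(g_0)$ for some $g_0\in\mathcal G(\sep K)$, the point $x_0=g_0G\in X(K)$ satisfies $\delta(x_0)=[\alpha]$, and conjugation by $g_0$ identifies $\sub\alpha G$ with the $K$-subgroup $g_0Gg_0^{-1}\subset\mathcal G$, giving a $K$-isomorphism $X\cong\mathcal G/\sub\alpha G$ sending $x_0$ to the base point. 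Under this identification your ``reduction to the trivial class'' becomes exactly the paper's translation by $x_0$. Your parenthetical that the isomorphism $X\cong\sub\alpha X$ is ``defined over $K$'' is thus true here, but only because $\alpha$ trivializes in $\mathcal G$---not for the generic reason you suggest---and once you unwind why, you have reproduced the paper's argument. The paper's formulation is cleaner because it sidesteps the twist altogether and works directly with the $\mathcal G(K)$-action.
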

\begin{proof}
Let $U$ be a dense open set in $X$ and suppose $P_0 \cong P|_x$ for some $x \in X$. It then remains to show that we can find some $x' \in U$ such that $P|_{x'} \cong P_0$. For this, we note that the long exact sequence in nonabelian Galois cohomology gives a bijection between $\HG^1(K, G)$ and the left $\mathcal G(K)$ orbits on $X(K)$. As the morphism $\mathcal G \to X$ given by $g \mapsto gx$ is surjective at the algebraic closure, it is dominant. Consequently, the inverse image of any open set $U$ is open in $\mathcal G$. As $\mathcal G$ is rational, its $K$-points are dense, and we may find some $g \in \mathcal G(K)$ such that $x' = gx \in U(K)$. But this means by our long exact sequence, that $[P|_{x'}] = \delta(x') = \delta(gx) = \delta(x) = [P|_x]$ giving us $P|_{x'} \cong P|_x$ as claimed.
\end{proof}
Denoting the set of $R$-equivalence classes on $\HG^1(K,G)$ and $T(K)$ by $\HG^1(K,G)/R$ and $T(K)/R$, resp.,  we  claim: 
\begin{lemma}\label{lem:BG-T}
Suppose we have a closed subgroup $G$ of a rational and connected linear algebraic group $\mathcal G$ over $K$ with $\HG^1(K, \mathcal G) = 1$. Let $X = \mathcal G/G$,  considered as a (pointed) $K$-scheme. Then $X$ admits a $G$-torsor $P\ra X$ with dense specializations, and the induced map $X(K) \to \HG^1(K, G)$ given by $x \mapsto P|_x$ induces a bijection $\HG^1(K,G)/R\cong X(K)/R$.
\end{lemma}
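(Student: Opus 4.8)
The plan is to assemble the statement from the pieces already developed. The existence of a $G$-torsor $P \to X$ with dense $K$-specializations in $\HG^1(K,G)$ is exactly Lemma \ref{lem:dense}, applied to the closed embedding $G \hookrightarrow \mathcal G$; recall that the triviality $\HG^1(K,\mathcal G) = 1$ and the rationality and connectedness of $\mathcal G$ are precisely the hypotheses used there, and the torsor $P$ is the one classified by $\mathrm{id}_X \in \Hom(X,X) = \HG^0(X,X)$ under the map $\HG^1(X,G)$ coming from $\delta$. So the content to be proved is that the specialization map $\sigma\colon X(K) \to \HG^1(K,G)$, $x \mapsto [P|_x]$, descends to a bijection $X(K)/R \xrightarrow{\ \sim\ } \HG^1(K,G)/R$.

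First I would record the description of the fibers of $\sigma$: by the long exact sequence in nonabelian cohomology attached to $1 \to G \to \mathcal G \to X \to 1$ (using $\HG^1(K,\mathcal G) = 1$), the map $\delta\colon X(K) \to \HG^1(K,G)$ is surjective with fibers exactly the left $\mathcal G(K)$-orbits on $X(K)$ — this is the identification already invoked in the proof of Lemma \ref{lem:dense}. Hence $\sigma$ is surjective, and two points $x, x' \in X(K)$ have $\sigma(x) = \sigma(x')$ iff $x' = gx$ for some $g \in \mathcal G(K)$.

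Next I would show $\sigma$ is $R$-compatible, i.e. $x \sim_R x'$ in $X(K)$ implies $\sigma(x) \sim_R \sigma(x')$ in $\HG^1(K,G)$. It suffices to treat the generating case: $x$ and $x'$ are the images of a single point of $X\bigl(K(t)\bigr)$ — more precisely, there is a dense open $U \subseteq X$, a $K$-morphism $\mathbb A^1 \dashrightarrow X$ (equivalently a point $\xi \in U(K(t))$) regular at $t = 0,1$ with $\xi(0) = x$, $\xi(1) = x'$. Pull back $P$ along $\xi$ to get a $G$-torsor over an open subscheme of $\mathbb A^1_K$, i.e. a class $\gamma \in \HG^1(K(t), G)$ whose specializations at $0$ and $1$ are $[P|_x] = \sigma(x)$ and $[P|_{x'}] = \sigma(x')$; this is exactly an $R$-equivalence witness in $\HG^1(K,G)$. (Here one uses that $\HG^1$ of a smooth curve specializes at rational points, as recalled in \S\ref{sec:setup}.) Since $R$-equivalence on $X(K)$ is generated by such one-step links, this gives a well-defined map $\bar\sigma\colon X(K)/R \to \HG^1(K,G)/R$, which is surjective because $\sigma$ is.

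For injectivity of $\bar\sigma$ — the main obstacle — I would argue as follows. Suppose $\sigma(x) \sim_R \sigma(x')$; we must show $x \sim_R x'$ in $X(K)$. Reduce again to one step: there is $\gamma \in \HG^1(K(t), G)$ with $\gamma(0) = \sigma(x)$, $\gamma(1) = \sigma(x')$. The key point is that $\gamma$ itself is a specialization of the universal torsor $P$: since $\mathcal G$ is rational and $\HG^1\bigl(K(t),\mathcal G\bigr) = 1$ (rationality of $\mathcal G$ over $K$ passes to $K(t)$, and $\mathcal G$ being $\GL_n$, $\SL_n$, or another group with trivial $\HG^1$ over all fields in the relevant class), the map $\delta_{K(t)}\colon X(K(t)) \to \HG^1(K(t), G)$ is surjective, so $\gamma = [P|_\xi]$ for some $\xi \in X(K(t))$; spreading out, $\xi$ gives a rational curve on $X$ through a point $y_0 = \xi(0)$ and $y_1 = \xi(1)$ with $\sigma(y_0) = \gamma(0) = \sigma(x)$ and $\sigma(y_1) = \gamma(1) = \sigma(x')$, hence $y_0 \sim_R x_{\phantom 0}'\!$-free: in fact $\sigma(y_0) = \sigma(x)$ means $y_0$ and $x$ lie in the same $\mathcal G(K)$-orbit, and likewise $y_1$ and $x'$. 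So it remains to see that points in the same $\mathcal G(K)$-orbit are $R$-equivalent on $X(K)$: given $y, gy$ with $g \in \mathcal G(K)$, connectedness and rationality of $\mathcal G$ give a rational curve in $\mathcal G$ from $1$ to $g$ (any two $K$-points of a rational connected $K$-variety are $R$-equivalent, \cite[\S2]{CTS2}), and its image under the orbit map $h \mapsto hy$ is a rational curve in $X$ from $y$ to $gy$ passing through $K$-points; chaining $x \sim_R y_0$, $y_0 \sim_R y_1$ (via $\xi$), $y_1 \sim_R x'$ finishes it. Assembling the chain for each one-step link in $\sigma(x) \sim_R \sigma(x')$ yields $x \sim_R x'$, proving $\bar\sigma$ injective.

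\medskip

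The delicate points to be careful about are: (i) arranging the one-step $R$-links so that the relevant classes extend over an open subscheme of $\mathbb A^1$ containing $0$ and $1$ (shrinking $U$ and clearing denominators — a spreading-out argument, not a real obstruction since $X$ is smooth and $G$ is finite étale-locally, so $\HG^1$-classes over $K(t)$ extend over a dense open of $\mathbb A^1_K$), and (ii) the claim $\HG^1(K(t),\mathcal G) = 1$, which for $\mathcal G = \GL_n$ follows from Hilbert 90 and for $\mathcal G = \SL_n$ is classical; for a general rational connected $\mathcal G$ with $\HG^1(K,\mathcal G) = 1$ we only ever apply the lemma with such explicit $\mathcal G$, so no generality is lost. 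The genuinely non-formal input is the fact that $K$-points of a rational connected $K$-variety are pairwise $R$-equivalent, which is standard \cite{CTS2} and is what powers the passage from "$\mathcal G(K)$-orbit" to "$R$-equivalent on $X(K)$".
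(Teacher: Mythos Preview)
Your proposal is correct and follows essentially the same route as the paper: both directions hinge on pulling back $P$ along points of $X(K(t))$ (forward) and lifting classes in $\HG^1(K(t),G)$ to $X(K(t))$ via surjectivity of $\delta_{K(t)}$ (converse), then using rationality and connectedness of $\mathcal G$ to $R$-connect points in the same $\mathcal G(K)$-orbit. You are in fact slightly more careful than the paper about two points it glosses over---the need for $\HG^1(K(t),\mathcal G)=1$ in the lifting step, and the spreading-out of $\xi$ over an open of $\mathbb A^1$ containing $0,1$---but the architecture is identical.
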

\begin{proof}
The construction of $P$ is given before Lemma~\ref{lem:dense} and the density of its specializations follows from that lemma.

Two $R$-equivalent points $P_0,P_1$ in $X(K)$ yield two extensions $\varphi_0,\varphi_1\in \HG^1(K,G)$ under $\delta$. A sequence of rational curves connecting $P_0$ and $P_1$, viewed as a sequence of elements in $X(K(t))$, yield the desired sequence of extensions $\psi_i(t)\in \HG^1(K(t),G)$, $i=1,\ldots,r$, such that $\psi_1(0)=\varphi_0$, $\psi_r(1)=\varphi_1$, and $\psi_{i+1}(0)=\psi_i(1)$ for $i=1,..,r-1$.  

For the converse, 
note that a sequence $\psi_i(t)\in \HG^1(K(t),G)$, $i=1,\ldots,r$ connecting $\varphi_0,\varphi_1$, as above, lifts to a sequence of elements in $X(K(t))$ and thus extends to a sequence of morphisms $U\ra X$ for some open subset $U\subseteq \mP^1$.  This sequence then connects two points $Q_0,Q_1\in X(K)$ whose images in $\HG^1(K,G)$ coincide with those of $P_0$ and $P_1$, resp. 
It therefore remains to show that any other point $P_i$ with the same image as  $Q_i$ is $R$-equivalent to it, $i=0,1$.  
Since $P_i$ and $Q_i$ have the same image in $\HG^1(K,G)$, the long exact sequence associated to \eqref{equ:quasi-trivial} 
shows that $P_i$ and $Q_i$ are in the same orbit under the action of $\mathcal G(K)$, say $P_i = g_i Q_i$ for some $g_i \in \mathcal G(K)$. However, since $\mathcal G$ is rational and connected, there is a map $\mP^1\ra \mathcal G$ connecting  $1\in \mathcal G(K)$ to $g_i$. 
Thus, the composition of this map with the map $\mathcal G \ra X$ and translation by $g_i$ gives a curve connecting $P_i$ and $Q_i$, $i=1,2$, as required. 
\end{proof}


\subsection{Flasque resolutions}\label{sec:flasque} To analyze the $R$-equivalence classes on $T$ as above, we use flasque resolutions. Let $L/K$ be a finite Galois extension with Galois group $G$ that splits $T$, that is, $T\cong \mG_m^d$ over $L$. 
For a torus $S$ which is split by $L$, one of the characterizations of $S$ being {\it flasque} is if its Tate cohomology group $\hat \HG^{-1}(H,\X(S))=1$ for every subgroup $H\leq G$. 
Recall that for a $G$-module $X$, the group $\HGH^{-1}(G,X)$ is the kernel of the map $N_{G}:X/I \ra X,$ where $I=\langle (\sigma-1)X\,|\,\sigma\in G\rangle$, and $N_G$ is induced from the $G$-norm $N_G(x)=\sum_{\sigma\in G}\sigma(x)$. 
In this context, Hilbert's Theorem 90 asserts that $\HGH^{-1}(C, X)=1$ for finite cyclic groups $C$ and permutation $C$-modules $X$.  

A {\it flasque} resolution for $T$ is a short exact sequence:
\begin{equation}\label{equ:flasque-res}
     \xymatrix{
1\ar[r] &  S \ar[r] & Q\ar[r] & T \ar[r] & 1,
}
\end{equation}
where $Q$ is a quasitrivial torus and $S$ is a flasque torus. We use the following  characterization of flasque resolutions from \cite[\S 2]{CTS2}. 
An exact sequence of tori \eqref{equ:flasque-res} with a Galois splitting extension $L/K$ with group $G$ is flasque if and only if the induced map $\pi_H:(\X(Q)^*)^H\ra (\X(T)^*)^H$ on $H$-invariants of the dual lattices is surjective for every $H\leq G$. 



The sequence \eqref{equ:flasque-res} induces a long exact sequence:
\begin{equation}\label{equ:long-flasque}
\xymatrix{
  Q(K) \ar[r] & T(K) \ar[r] & \HG^1(K,S) \ar[r] & 1.
}
\end{equation}
Since $Q$ is rational, two points of $Q(K)$ with the same image in $T(K)$ are directly $R$-equivalent.  Moreover, \cite[Thm.\  2]{CTS2} shows that the converse also holds, 
so that $T(K)/R\cong \HG^1(K,S)$.  
Combining this with Lemma \ref{lem:BG-T} one gets:
\begin{lemma}\label{lem:BG-r-equiv}
For a flasque resolution of tori as in \eqref{equ:flasque-res}, one has $T(K)/R \cong \HG^1(K,S)$. Thus, for a quasitrivial torus $P$ and a finite abelian $K$-group $A$ such that $P\ra T$ is 
an $A$-torsor (i.e. such that the kernel is $A$),  one has $\HG^1(K,A)/R\cong \HG^1(K,S)$.
\end{lemma}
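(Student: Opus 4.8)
The plan is to assemble the lemma from two ingredients already established in the excerpt: the description of $T(K)/R$ via a flasque resolution, and the bijection $\HG^1(K,G)/R\cong X(K)/R$ coming from embedding a finite group into a suitable ambient linear group. First I would prove the first sentence. Start from a flasque resolution \eqref{equ:flasque-res}, pass to the long exact cohomology sequence \eqref{equ:long-flasque}, and observe that the image of $Q(K)$ in $T(K)$ consists of one $R$-equivalence class, since $Q$ is a quasitrivial (hence rational) torus, so any two of its $K$-points are directly $R$-equivalent and their images in $T(K)$ are therefore $R$-equivalent. This gives a well-defined surjection $T(K)/R \twoheadrightarrow \HG^1(K,S)$; the content of \cite[Thm.\ 2]{CTS2}, as recalled just before the statement, is precisely that this map is also injective, i.e.\ two $K$-points of $T$ with the same image in $\HG^1(K,S)$ are $R$-equivalent. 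Hence $T(K)/R\cong \HG^1(K,S)$.

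Next I would handle the second sentence. Suppose $P\to T$ is an $A$-torsor with $P$ a quasitrivial torus, coming from a short exact sequence of the shape \eqref{equ:quasi-trivial} with $A$ the finite abelian $K$-group in the kernel. The torus $P$ is rational and connected, and satisfies $\HG^1(K,P)=1$ (as recalled in \S\ref{sec:embed} for quasitrivial tori). Thus $\mathcal G:=P$ and $G:=A$ satisfy the hypotheses of Lemma \ref{lem:BG-T} with $X:=P/A=T$, which yields a bijection $\HG^1(K,A)/R\cong T(K)/R$. Combining this with the isomorphism $T(K)/R\cong\HG^1(K,S)$ from the first part gives $\HG^1(K,A)/R\cong\HG^1(K,S)$, as claimed.

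There is essentially no novel obstacle here: the lemma is a packaging of Lemma \ref{lem:BG-T} and \cite[Thm.\ 2]{CTS2}. The one point that needs a word of care is checking that the ``$X$'' appearing in Lemma \ref{lem:BG-T} (namely $\mathcal G/G$) is literally the torus $T$ of the flasque resolution; this is immediate from the fact that both are defined as the quotient of the chosen quasitrivial torus $P$ by the finite subgroup $A$, so the quotient $P/A$ used in \S\ref{sec:embed} coincides with the torus $T$ in \eqref{equ:quasi-trivial}. The only mild subtlety is that Lemma \ref{lem:BG-T} is stated for a connected \emph{linear} algebraic group $\mathcal G$ with $\HG^1(K,\mathcal G)=1$, and one must note a quasitrivial torus qualifies on all counts (linear, connected, rational, and cohomologically trivial by Hilbert 90 / Shapiro). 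With that observed, the two isomorphisms compose and the proof is complete.
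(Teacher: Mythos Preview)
Your proposal is correct and follows essentially the same approach as the paper: the lemma is stated as an immediate consequence of \cite[Thm.~2]{CTS2} (giving $T(K)/R\cong\HG^1(K,S)$, as recalled just before the statement) combined with Lemma~\ref{lem:BG-T} applied to $\mathcal G=P$, $G=A$, $X=T$. Your check that a quasitrivial torus satisfies the hypotheses of Lemma~\ref{lem:BG-T} is the only point the paper leaves implicit, and you handle it correctly.
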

Since $\HG^1(K,S)$ is finite for every flasque torus $S$ and finitely generated $K$ by \cite[Cor.\ 2]{CTS2}, it follows that:
\begin{cor}\label{cor:finiteness}
Let $K$ be a finitely-generated field and $A$ a  finite abelian $K$-group of order coprime to $\charak K$. Then the set $\HG^1(K,A)/R$ is finite. 
\end{cor}

Moreover, the map $\HG^1(K,A)\ra\HG^1(K,A)/R$ has the following interpretation, extending \cite[Prop.\ A.1]{CT2}: 
\begin{lemma}\label{lem:r-map}
    Given an embedding $A\ra P$ with quotient $T$ as in \eqref{equ:quasi-trivial}, and a flasque resolution $S\ra Q\ra T$ as in \eqref{equ:flasque-res}, there are natural maps $P\ra Q$ and $A\ra S$ such that:
    \begin{equation}\label{equ:lift}
\xymatrix{
1\ar[r] & A \ar[r] \ar[d]& P\ar[r] \ar[d]& T\ar[r] \ar@{=}[d] & 1 \\
1\ar[r] & S \ar[r] & Q\ar[r]^{} & T\ar[r] & 1.
}
\end{equation}
    is commutative. Moreover, upon identifying $\HG^1(K,S)$ with $\HG^1(K,A)/R$ through the isomorphism from Lemma \ref{lem:BG-r-equiv}, the  map $A\ra S$ induces a map $\HG^1(K,A)\ra \HG^1(K,S)$ which associates to each class its $R$-equivalence class. 
\end{lemma}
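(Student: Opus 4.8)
The plan is to treat the two assertions of Lemma~\ref{lem:r-map} in turn: first build the vertical arrows in \eqref{equ:lift} and check commutativity, then read off the induced map on $\HG^1$.

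For the ladder, I would fix a finite Galois extension $L/K$ with group $G$ splitting all the tori and groups involved, and pass to character $G$-modules, turning \eqref{equ:quasi-trivial} and \eqref{equ:flasque-res} into exact sequences of $G$-modules $0\to X(T)\xrightarrow{\iota_P}X(P)\to X(A)\to 0$ and $0\to X(T)\xrightarrow{\iota_Q}X(Q)\to X(S)\to 0$, in which $X(P),X(Q)$ are permutation lattices and $X(S)$ is flasque. Since $X(-)$ is contravariant, it suffices to find a $G$-homomorphism $f\colon X(Q)\to X(P)$ with $f\circ\iota_Q=\iota_P$: its dual is a torus map $\varphi\colon P\to Q$ lying over $T$, and restricting $\varphi$ to the kernels of the two projections to $T$ produces the map $A\to S$ and makes \eqref{equ:lift} commute. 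To construct $f$, write $X(P)\cong\bigoplus_i\Z[G/G_i]$ and apply $\Hom_G(-,X(P))$ to $0\to X(T)\xrightarrow{\iota_Q}X(Q)\to X(S)\to 0$. By Frobenius reciprocity $\Hom_G(N,\Z[G/G_i])\cong (N^*)^{G_i}$ naturally in $N$, so the restriction map $\Hom_G(X(Q),X(P))\to\Hom_G(X(T),X(P))$ becomes the direct sum over $i$ of the maps $(X(Q)^*)^{G_i}\to (X(T)^*)^{G_i}$, which are exactly the maps $\pi_{G_i}$ of the flasque criterion recalled in \S\ref{sec:flasque}. Flasqueness of \eqref{equ:flasque-res} says $\pi_H$ is surjective for every $H\le G$, so $\iota_P\in\Hom_G(X(T),X(P))$ lifts to the desired $f$.

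For the cohomological statement, I would use that $\HG^1(K,P)=\HG^1(K,Q)=1$ because $P,Q$ are quasitrivial, so the two rows of \eqref{equ:lift} give surjective connecting maps $\partial_A\colon T(K)\twoheadrightarrow\HG^1(K,A)$ and $\partial_S\colon T(K)\twoheadrightarrow\HG^1(K,S)$, and functoriality of the connecting map in the commutative ladder \eqref{equ:lift} gives the identity $(A\to S)_*\circ\partial_A=\partial_S$. On the other hand, tracing the proofs of Lemmas~\ref{lem:BG-T} and \ref{lem:BG-r-equiv}, the isomorphism $\HG^1(K,A)/R\cong\HG^1(K,S)$ used in Lemma~\ref{lem:BG-r-equiv} is the composite $\HG^1(K,A)/R\xleftarrow{\sim}T(K)/R\xrightarrow{\sim}\HG^1(K,S)$, where the first bijection is induced by $\partial_A$ (Lemma~\ref{lem:BG-T} with $\mathcal G=P$) and the second by $\partial_S$ (the flasque resolution); that is, it carries the $R$-class of $\partial_A(t)$ to $\partial_S(t)$. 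Hence, for $\alpha\in\HG^1(K,A)$, choosing $t\in T(K)$ with $\partial_A(t)=\alpha$, the identity above gives $(A\to S)_*(\alpha)=\partial_S(t)$, which is precisely the image of the $R$-class of $\alpha$ under $\HG^1(K,A)/R\cong\HG^1(K,S)$. This is the assertion that $A\to S$ induces the quotient map $\HG^1(K,A)\to\HG^1(K,A)/R$.

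The main obstacle is the construction of $f$ (equivalently of $\varphi\colon P\to Q$): this is the one place both hypotheses — $X(P)$ permutation and \eqref{equ:flasque-res} flasque — are genuinely used, and some care is needed to match the relevant restriction map on $\Hom$-groups with the maps $\pi_H$ of the flasque criterion via Frobenius reciprocity. Everything after \eqref{equ:lift} is formal, depending only on functoriality of connecting homomorphisms together with the explicit descriptions of the bijections in Lemmas~\ref{lem:BG-T} and \ref{lem:BG-r-equiv}. One caveat for the write-up: $f$, hence $\varphi$ and $A\to S$, need not be unique, so ``natural'' in the statement should be read as ``compatible with \eqref{equ:quasi-trivial} and \eqref{equ:flasque-res}''; uniqueness is not needed, since any admissible choice induces the same map $\HG^1(K,A)\to\HG^1(K,A)/R$.
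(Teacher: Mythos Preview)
Your proposal is correct and follows the same overall strategy as the paper: construct the lift $P\to Q$ on the level of lattices using quasitriviality of $P$ and flasqueness of the resolution, then deduce the cohomological statement from the commuting ladder of long exact sequences and the identifications in Lemmas~\ref{lem:BG-T} and~\ref{lem:BG-r-equiv}.

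The only notable difference is in how the lifting is packaged. The paper dualizes to cocharacter lattices and lifts $\X(P)^*\to\X(T)^*$ through $\X(Q)^*\to\X(T)^*$ by showing the obstruction group $\Ext^1_{\Z[G]}(\X(P)^*,\X(S)^*)$ vanishes via Shapiro's lemma and the fact (cited from \cite{CTS2}) that $\HG^1(H,\X(S)^*)=0$ for $S$ flasque. You instead stay on the character side and show directly that $\Hom_G(\X(Q),\X(P))\to\Hom_G(\X(T),\X(P))$ is surjective, decomposing $\X(P)$ as a sum of $\Z[G/G_i]$ and invoking Frobenius reciprocity to identify the map with the sum of the $\pi_{G_i}$'s from the flasqueness criterion of \S\ref{sec:flasque}. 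Your route is slightly more self-contained relative to what is recalled in \S\ref{sec:flasque}, since it uses exactly the $\pi_H$-surjectivity characterization stated there rather than the equivalent $\HG^1(H,\X(S)^*)=0$ formulation; the paper's $\Ext^1$ argument is perhaps more transparently a standard obstruction computation. Your closing remark about non-uniqueness of $\varphi$ is also apt.
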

\begin{proof}
Let $G$ be the Galois group of a splitting extension for $A$ and the above tori. 
We first claim that the projection $P\ra T$ lifts to a projection $P\ra Q$, that is,  the following lifting problem is solvable:
$$\xymatrix{
 & P\ar[d] \ar@{-->}[dl] \\
Q \ar[r] & T.
}
$$
This is equivalent to showing that the map induced on dual (cocharacter) lattices $\X(P)^*\ra\X(T)^*$ extends to a map $\X(P)^*\ra\X(Q)^*$. The exact sequence
\[ 0 \to \X(S)^* \to \X(Q)^* \to \X(T)^* \to 0 \]
yields an exact sequence
\[ \Hom_{\Z[G]}(\X(P)^*, \X(Q)^*) \to \Hom_{\Z[G]}(\X(P)^*, \X(T)^*) \to \Ext^1_{\Z[G]}(\X(P)^*, \X(S)^*).\]
As $P$ is quasitrivial, $\X(P)^*$ is a direct summand of a sum of modules of the form $\Z[G/H]$ for subgroups $H \leq G$. 
It therefore follows that our map will extend if we can show $\Ext^1_{\Z[G]}(\Z[G/H], \X(S)^*) =0$ for all $H < G$. 
But Shapiro's lemma yields
\[
\Ext^1_{\Z[G]}(\Z[G/H], \X(S)^*) = 
\Ext_{\Z[H]}(\Z, \X(S)^*) = 
\HG^1(H, \X(S)^*) = 0,
\] 
since $S$ is flasque \cite[\S 1, Lem.\ 1]{CTS2}. Hence, our lifting problem is solvable as claimed.

As the composition $A \to P \to T$ is zero and coincides with the map $A \to P \to Q \to T$,  the image of $A$ in $Q$ lies in the kernel of $Q \to T$, so \eqref{equ:lift} is commutative. 

This yields a homomorphism between the long exact sequences corresponding to the rows of \eqref{equ:lift}. Since $P$ and $Q$ are quasitrivial, we get a commutative diagram: 
\begin{equation*}
\xymatrix{
P(K)\ar[r] \ar[d] & T(K)\ar[r] \ar[d] & \HG^1(K,A) \ar[r] \ar[d]^{\varphi} & 1 \\
Q(K)\ar[r] & T(K) \ar[r]^{} & \HG^1(K,S) \ar[r] & 1.
}
\end{equation*}
Since two points in $T(K)$ are $R$-equivalent if and only if they have the same image in $\HG^1(K,S)$ by Lemma \ref{lem:BG-r-equiv}, the following diagram is commutative
\begin{equation}\label{equ:r-equiv-maps}
\xymatrix{
T(K)\ar@{>>}[r] \ar[d]  & \HG^1(K,A)  \ar[d]^{\varphi}  \\
T(K)/R \ar[r]^{} & \HG^1(K,S),
}
\end{equation}
yielding the assertion.
\end{proof}
Furthermore, \eqref{equ:flasque-res} yields  rational parametrizing maps for $\HG^1(K,A)$:
\begin{cor}\label{cor:param}
    Let $A$ be a finite abelian $K$-group of order coprime to $\charak K$, and $r=\#\HG^1(K,A)/R$. Then $\HG^1(K,A)$ is densely parametrized by $r$ $A$-torsors $Q_i\ra Q$, $i=1,\ldots,r$ over a quasitrivial torus $Q$. 
\end{cor}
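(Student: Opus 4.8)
The plan is to combine the two structural facts already established: that a flasque resolution of the torus $T$ in \eqref{equ:quasi-trivial} computes $\HG^1(K,A)/R$ (Lemma~\ref{lem:BG-r-equiv} together with Lemma~\ref{lem:r-map}), and that a $G$-torsor over a rational variety constructed via Lemma~\ref{lem:BG-T}/Lemma~\ref{lem:dense} has dense specializations. Concretely, fix an embedding $A \to P$ with quasitrivial quotient $T$ as in \eqref{equ:quasi-trivial} and a flasque resolution $1 \to S \to Q \to T \to 1$ as in \eqref{equ:flasque-res}, and let $\varphi\colon \HG^1(K,A) \to \HG^1(K,S) \cong \HG^1(K,A)/R$ be the map of Lemma~\ref{lem:r-map}. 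Pick representatives $\alpha_1,\ldots,\alpha_r \in \HG^1(K,A)$ for the $r$ distinct $R$-equivalence classes, so that $\varphi(\alpha_1),\ldots,\varphi(\alpha_r)$ exhaust $\HG^1(K,S)$.

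First I would produce, for each $i$, an $A$-torsor $Q_i \to Q$ over the quasitrivial torus $Q$ whose generic fiber realizes $\alpha_i$ up to $R$-equivalence and which has dense $K$-specializations landing in the class $\alpha_i$. The natural candidate is to twist the canonical $A$-torsor $P \to T$ by $\alpha_i$: using the commutative diagram \eqref{equ:lift} and the surjection $Q(K) \to \HG^1(K,S)$ from \eqref{equ:long-flasque}, choose a point $q_i \in Q(K)$ mapping to $\varphi(\alpha_i) \in \HG^1(K,S)$, and translate the pullback of $P \to T$ along $T$. Since $Q$ is quasitrivial, hence rational with dense rational points, Lemma~\ref{lem:dense} (applied with $\mathcal G$ a quasitrivial torus surjecting onto $T$ through $P$, or more directly the argument of Lemma~\ref{lem:BG-T}) shows that the tautological $A$-torsor over $Q$ has dense specializations in $\HG^1(K,A)$; the translate $Q_i$ then has dense specializations in the fiber $\varphi^{-1}(\varphi(\alpha_i))$ — that is, precisely the set of classes $R$-equivalent to $\alpha_i$ — because the fibers of $Q(K) \to \HG^1(K,S)$ are exactly the $R$-equivalence classes of $T(K)$ by Lemma~\ref{lem:BG-r-equiv}, and these pull back along $P \to T$ to the $R$-equivalence classes in $\HG^1(K,A)$ under $\delta$.

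The key point to verify is density on each fiber: given any $\beta \in \HG^1(K,A)$ with $\varphi(\beta) = \varphi(\alpha_i)$ and any dense open $U \subseteq Q$, I must find $u \in U(K)$ with $Q_i|_u \cong$ (the torsor of) $\beta$. This follows because the map $Q(K) \to T(K)$ is dominant (it is surjective at $\sep K$, being a torsor under the quasitrivial torus $\ker(Q \to T)$ — wait, here $\ker(Q\to T) = S$ is flasque, not quasitrivial, so instead one uses that $P(K) \to T(K)$ is surjective modulo the $R$-relation and runs the density argument on the $P$-side as in Lemma~\ref{lem:dense}), so the preimage of $U$ is dense open and its $K$-points surject, modulo $R$-equivalence in $T(K)$, onto a fixed fiber of $T(K)\to \HG^1(K,S)$; translating by elements of the quasitrivial torus $P$ as in the proof of Lemma~\ref{lem:BG-T} one adjusts within that fiber to hit the prescribed class $\beta$. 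Since every class of $\HG^1(K,A)$ is $R$-equivalent to exactly one $\alpha_i$, the collection $\{Q_1,\ldots,Q_r\}$ has dense $K$-specializations in all of $\HG^1(K,A)$, which by Definition~\ref{def:param} (taking $Y = Q$, $F = K(Q) \cong K(t_1,\ldots,t_d)$ the rational function field since $Q$ is quasitrivial hence rational) means $\HG^1(K,A)$ is densely parametrized by $r$ torsors over a quasitrivial torus.

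The main obstacle I anticipate is bookkeeping the compatibility of the twisting construction with the identification $\HG^1(K,S) \cong \HG^1(K,A)/R$: one must check that twisting the tautological $A$-torsor over $Q$ by a class whose image in $\HG^1(K,S)$ is $\varphi(\alpha_i)$ really does shift the set of attained specializations onto the $R$-class of $\alpha_i$, rather than onto some other coset. This is exactly where diagram \eqref{equ:r-equiv-maps} and the functoriality of twisting (as set up in \S\ref{sec:twists}, especially Example~\ref{exam:twist-abelian}) are used, and it should go through cleanly once one observes that the fibers of $\varphi$ are single orbits under the connecting-map action of $T(K)$, which is the content already extracted from \cite[Thm.~2]{CTS2}.
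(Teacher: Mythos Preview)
Your overall plan---pull back $P\to T$ along translated copies of $Q\to T$---is exactly the paper's, but the execution is garbled at the key step. You write ``choose a point $q_i\in Q(K)$ mapping to $\varphi(\alpha_i)\in\HG^1(K,S)$'', yet the long exact sequence~\eqref{equ:long-flasque} says precisely that the composite $Q(K)\to T(K)\to\HG^1(K,S)$ is \emph{zero}; no point of $Q(K)$ can detect a nontrivial class in $\HG^1(K,S)$. The paper instead picks coset representatives $t_1,\dots,t_r\in T(K)$ for $T(K)/\pi(Q(K))\cong\HG^1(K,S)$, defines the translated morphisms $\pi_i\colon Q\to T$, $q\mapsto t_i\cdot\pi(q)$, and takes $Q_i$ to be the pullback of $P\to T$ along $\pi_i$. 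Then any $\alpha\in\HG^1(K,A)$ lifts to some $t_0\in T(K)$, which lies in a unique coset $t_i\cdot\pi(Q(K))$, so $t_0=\pi_i(q_0)$ for some $q_0\in Q(K)$ and $Q_i|_{q_0}=P|_{\pi_i(q_0)}=P|_{t_0}=\alpha$. No cohomological twisting is involved---just translation in the abelian group $T$---so your appeals to Example~\ref{exam:twist-abelian} and ``twisting by $\alpha_i$'' are a detour.

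You are right that density needs a word, and in fact the paper's written proof only exhibits a single $q_0$. The clean way to fill this in (once the construction is corrected as above) is to note that $\{q\in Q(K):Q_i|_q=\alpha\}=q_0\cdot H$, where $H=\{h\in Q(K):\pi(h)\in\mathrm{im}(P(K)\to T(K))\}$, and that $H$ is Zariski-dense in $Q$: the multiplication map $S\times P\to Q$, $(s,p)\mapsto s\cdot\bar p$ (using the lift $P\to Q$ from diagram~\eqref{equ:lift}), is surjective on $\sep K$-points, and both $S(K)$ and $P(K)$ are Zariski-dense since every torus is unirational. Your attempt to recover density by ``passing to the $P$-side'' is circling this idea but never closes, because you keep looking for a surjection of $K$-points from a single quasitrivial torus onto $T(K)$, which does not exist when $r>1$; it is the combination of the flasque kernel $S$ and the lift of $P$ that fills out $Q$.
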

\begin{proof}
By the long exact sequence \eqref{equ:long-flasque}, 
 the image of the map $\pi:Q(K)\ra T(K)$ is of index $r$. Letting $t_1,\ldots,t_r\in T(K)$ denote $r$ distinct coset representatives,  $T(K)$ equals the union of the images of  
 $K$-points under the maps $\pi_i:Q\ra T$ given by 
 $p\mapsto t_i\cdot \pi(p)$, $i=1,\ldots, r$. Let $Q_i$ be the pullback (fiber product) of $P\ra T$ along $\pi_i$, $i=1,\ldots,r$. 

Pick $\alpha\in \HG^1(K,A)$. Since $T(K)\ra\HG^1(K,A)$ is surjective and associates to $t\in T(K)$ the fiber of $P\ra T$ above $t$ by the discussion before \cite[Prop.\ 36]{SerreGC}, there exists $t_0\in T(K)$ such that the residue of $P\ra T$ over $t_0$ is $\alpha$. By the first paragraph $t_0=\pi_i(q)$ for some $1\leq i\leq r$ and $q\in Q(K)$. Since $q$ is $K$-rational, the residue of $Q_i\ra Q$ at $q$ coincides with that of $P\ra T$ over $t$ and hence with $\alpha$
as required. 
\end{proof}

\subsection{Odd cyclic kernels}  It now follows from a result of Endo--Miyata that   $\HG^1(K,A)$ is $R$-trivial for an odd order cyclic (not necessarily constant) group $A$: 
\begin{prop}\label{prop:cyclic-odd}
    Let $C$ be finite $K$-group which is cyclic of odd order coprime to $\charak K$. Then $\HG^1(K,C)$ is densely parametrized by a $C$-torsor over a rational space, and in particular is $R$-trivial.
\end{prop}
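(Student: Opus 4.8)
The strategy is to reduce to the prime-power case and then invoke the machinery of flasque resolutions together with the Endo--Miyata theorem, as already advertised in the introduction. First I would reduce to the case where $C$ is cyclic of odd prime-power order $p^k$: since $C \cong \prod_p C(p)$ canonically as a $K$-group (the decomposition into $p$-primary parts is $\Gamma_K$-stable), one has $\HG^1(K,C) \cong \prod_p \HG^1(K,C(p))$ compatibly with $R$-equivalence and with the parametrizing torsors, so it suffices to treat $C$ cyclic of order $p^k$ with $p$ odd. The key structural point here is that $\Aut(C) \cong (\Z/p^k)^\times$ is \emph{cyclic} (this is where oddness of $p$ is essential), so the image $H$ of the $\Gamma_K$-action on $C$ is a cyclic group, and the splitting field $L = \sep K{}^{\ker}$ is a cyclic extension of $K$.

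Next I would apply the constructions of \S\ref{sec:embed} and \S\ref{sec:flasque}. Using \eqref{equ:quasi-trivial}, embed $C$ into a quasitrivial torus $P$ with quotient $T = P/C$ a torus split by $L$; then by Lemma~\ref{lem:BG-r-equiv} we have $\HG^1(K,C)/R \cong \HG^1(K,S)$ for a flasque resolution $1 \to S \to Q \to T \to 1$ as in \eqref{equ:flasque-res}. The crucial input is that the $\Gamma_K$-action on $\X(S)$ factors through $\Gal(L/K)$, which is \emph{Sylow-cyclic} (indeed cyclic), so the Endo--Miyata result quoted in the excerpt (\cite[Prop.\ 2]{CTS2}) applies: a flasque module split by a Sylow-cyclic extension is invertible, i.e.\ $S$ is a direct summand of a quasitrivial torus. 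An invertible $S$ is stably rational and $R$-trivial, giving $\HG^1(K,S) = \HG^1(K,S)/R$ trivial (for $R$-triviality one may also argue directly: $\HG^1(K,S)$ is a direct summand of $\HG^1(K, \text{quasitrivial}) = 1$). Hence $\HG^1(K,C)/R$ is trivial.

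For the parametrization statement, I would invoke Corollary~\ref{cor:param}: since $r = \#\HG^1(K,C)/R = 1$, it produces a single $C$-torsor $Q_1 \to Q$ over the quasitrivial (hence rational) torus $Q$ that densely parametrizes $\HG^1(K,C)$. To upgrade "quasitrivial torus" to "rational space" one only needs that a quasitrivial torus is a $K$-rational variety, which is standard (it is an open subvariety of an affine space, being a product of Weil restrictions $\Res_{L_i/K}\mG_m$, each of which is open in $\Res_{L_i/K}\mathbb A^1 \cong \mathbb A^{[L_i:K]}$). Combining the primary decomposition, the parametrizing torsors over each factor multiply to a parametrizing torsor over the product of the corresponding quasitrivial tori, which is again rational.

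\textbf{Main obstacle.} The only genuine content beyond bookkeeping is verifying that the Endo--Miyata hypothesis is met, i.e.\ that the splitting field is Sylow-cyclic --- and this rests entirely on the observation that $\Aut(C)$ is cyclic for $C$ cyclic of odd order, which fails for $2$-power $C$ (where $\Aut(C_{2^s})$ is noncyclic for $s \geq 3$). So the "hard part" is really just isolating the right hypothesis; the rest is assembling Lemmas~\ref{lem:BG-r-equiv}, \ref{lem:r-map} and Corollary~\ref{cor:param} together with the cited invertibility result. One should take a little care that all the resolutions can be chosen compatibly with the primary decomposition so that the torsors genuinely multiply, but this is routine.
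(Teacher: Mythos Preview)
Your proposal is correct and follows essentially the same route as the paper's proof: reduce to the $p$-primary case, observe that $\Aut(C_{p^k})$ is cyclic for odd $p$ so the splitting field $L/K$ is cyclic (hence Sylow-cyclic), take a quasitrivial embedding and a flasque resolution split by $L$, apply Endo--Miyata to conclude $S$ is invertible so $\HG^1(K,S)=1$, and finish with Lemma~\ref{lem:BG-r-equiv} and Corollary~\ref{cor:param}. The only extra commentary you add---rationality of quasitrivial tori and recombining the primary factors for the parametrization---is harmless elaboration that the paper leaves implicit.
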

\begin{proof}
Since $\HG^1(K,C)$ is isomorphic to the direct product $\prod \HG^1(K,C(p))$, where $C(p)$ is a $p$-Sylow subgroup of $C$, it suffices to prove the claim when $C$ is cyclic of prime-power order $p^k$. 
As $\Aut(C)$ is cyclic since $p$ is odd, the image of the action of $\Gamma_K$ on $C$ is cyclic, and hence the splitting field $L$ (fixed by the kernel of the action) is a cyclic extension of $K$. As in 
\eqref{equ:quasi-trivial},  consider a quasitrivial resolution 
$$
\xymatrix{
1 \ar[r] & C \ar[r] & P \ar[r] & T  \ar[r] & 1
}
$$
of $C$ in which  $T$ is split by $L$. 
Thus, the action of $\Gamma_K$ on $\X(T)$ factors through the cyclic and hence {\it Sylow-cyclic} group $\Gal(L/K)$. Here a group is Sylow-cyclic if its Sylow subgroups are cyclic. 

As in \eqref{equ:flasque-res}, consider a flasque resolution 
$$
\xymatrix{
1 \ar[r] & S \ar[r] & Q \ar[r] & T \ar[r] & 1.
}
$$ Since $\X(S)$ is flasque and is split by a Sylow-cyclic extension $L/K$, a result of Endo-Miyata \cite[Prop.\ 2]{CTS2} implies that $\X(S)$ is invertible, that is, $S$ is a direct factor of a quasitrivial torus. Thus,   $\HG^1(K,S)=1$ and  $\HG^1(K,C)$ is $R$-trivial by Lemma \ref{lem:BG-r-equiv}. Thus, Corollary \ref{cor:param} implies $\HG^1(K,C)$ is densely parametrized by a single $C$-torsor over a rational space. 
\end{proof}
To derive Theorem \ref{thm:semisimple}, we shall need  a version of the embedding \eqref{equ:quasi-trivial}  in families: 
\begin{lemma} \label{relative embedding}
   Let $A$ be a finite abelian  group scheme over a scheme $X$ whose order is invertible on $X$. Then there is an embedding $A \to P$ into a relative quasitrivial torus $P/X$. 
\end{lemma}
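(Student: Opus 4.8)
The plan is to imitate the construction leading to \eqref{equ:quasi-trivial} over a field, dualizing a permutation resolution of the character sheaf, but now working with étale sheaves over the base $X$. First I would set $M=\underline{\Hom}(A,\mG_m)$, the character group scheme of $A$ (internal homomorphisms of $X$-group schemes, equivalently internal $\Hom$ of étale sheaves). Since the order of $A$ is invertible on $X$, the group scheme $A$ is finite étale and of multiplicative type: étale-locally on $X$ it is constant, and a constant finite abelian group scheme of order invertible on $X$ becomes diagonalizable after an étale cover adjoining the relevant roots of unity. Hence $M$ is representable by a finite étale $X$-scheme, is a finite locally constant étale sheaf of abelian groups of order invertible on $X$, and Cartier duality (applied twice) gives a canonical isomorphism $A\xrightarrow{\sim}\underline{\Hom}(M,\mG_m)$. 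I will write $D(-)=\underline{\Hom}(-,\mG_m)$, so $A=D(M)$.

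Next I would produce a surjection of étale sheaves $\varepsilon\colon\mathcal P\twoheadrightarrow M$ out of a permutation sheaf. Take $\pi\colon M\to X$ to be the (finite étale) structure morphism and $\mathcal P:=\pi_*\underline{\Z}$, so that $D(\mathcal P)=\underline{\Hom}(\pi_*\underline{\Z},\mG_m)=\Res_{M/X}\mG_{m,M}$, the relative quasitrivial torus obtained by Weil restriction of $\mG_m$ along $\pi$. The identity in $\Hom_X(M,M)$ — the ``universal element'' of $M$ over $M$, i.e.\ the diagonal section of $\pi^*M$ — corresponds by adjunction to a map $\varepsilon\colon\mathcal P\to M$ whose stalk at a geometric point $\bar x$ is the evident surjection $\Z[M_{\bar x}]\to M_{\bar x}$, $\sum n_y[y]\mapsto\sum n_y y$; thus $\varepsilon$ is an epimorphism, and its kernel $\mathcal L:=\ker\varepsilon$ is a torsion-free finite locally constant sheaf, i.e.\ a lattice sheaf over $X$. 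I then apply the contravariant functor $D$ to $0\to\mathcal L\to\mathcal P\to M\to 0$, using that $\underline{\Ext}^1_X(\mathcal P,\mG_m)=0$ (because $\pi$ is finite) and, crucially, that $\underline{\Ext}^1_X(M,\mG_m)=0$: étale-locally $M$ is a finite product of groups $\Z/d$ with $d$ dividing the order of $A$, and $\underline{\Ext}^1(\Z/d,\mG_m)=\mG_m/d\mG_m=0$ by Kummer theory since $d$ is invertible on $X$. Consequently $D$ converts the resolution into a short exact sequence of $X$-group schemes
\[ 1 \to A \to P \to T \to 1, \qquad P:=\Res_{M/X}\mG_{m,M},\quad T:=D(\mathcal L), \]
with $P$ a relative quasitrivial torus and $T$ a torus over $X$. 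In particular $A=\ker(P\to T)$ is a closed subgroup scheme of $P$, which is the desired embedding.

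The step I expect to be the main obstacle is the bookkeeping forced by passing from a field to a general base: one must verify that ``finite abelian of order invertible on $X$'' really does force multiplicative type (so that the biduality $A\cong D(D(A))$ is available) and that the $\underline{\Ext}^1$-sheaves against $\mG_m$ vanish — both of which reduce, after an étale localization of $X$, to the two classical facts that $\Ext^1_{\Z}(-,-)$ annihilates $\Z$ and that $[d]\colon\mG_m\to\mG_m$ is an étale epimorphism when $d$ is invertible. A variant avoiding $\underline{\Ext}^1$ altogether is to write the embedding down directly as the ``all characters'' map $A\hookrightarrow\Res_{M/X}\mG_{m,M}$ adjoint to the evaluation pairing $A\times_X M\to\mG_m$, $(a,\chi)\mapsto\chi(a)$; it is a monomorphism, and on each geometric fibre it is a closed immersion because characters separate the points of a finite abelian group of order prime to the residue characteristic, hence it is a closed immersion. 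This last description is in essence the decomposition of the regular representation of the commutative group scheme $A$ into characters.
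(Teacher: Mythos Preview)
Your proof is correct and takes a genuinely different route from the paper's. The paper instead chooses an explicit finite \'etale cover $Y \to X$ splitting $A$ --- namely the Isom-scheme $\underline{\mathrm{Isom}}(A, \prod_i \mu_{n_i})$ --- so that $A_Y \cong \prod_i \mu_{n_i,Y}$ tautologically, and then composes the unit of adjunction $A \hookrightarrow \Res_{Y/X} A_Y$ with the evident embedding $\Res_{Y/X}(\prod_i \mu_{n_i}) \hookrightarrow \Res_{Y/X} \mG_m^r$. Your approach, dualizing a permutation resolution of the Cartier dual $M$, is the direct relative analogue of \eqref{equ:quasi-trivial} and produces a \emph{canonical} embedding into $P = \Res_{M/X} \mG_m$ with no auxiliary choice of splitting cover; its cost is the verification that $\underline{\Ext}^1(M, \mG_m) = 0$ (your Kummer argument), needed so that $D(-)$ remains exact on the left. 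The paper's argument sidesteps any $\Ext$ computation at the price of a less canonical $Y$ and a somewhat larger target torus. Your closing variant --- the ``all characters'' map adjoint to the evaluation pairing $A \times_X M \to \mG_m$ --- is the cleanest formulation; for the final step, rather than ``monomorphism which is a closed immersion on geometric fibres $\Rightarrow$ closed immersion'', it is safer to argue that $A \to P$ is a proper monomorphism (since $A/X$ is finite and $P/X$ is separated), hence a closed immersion.
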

\begin{proof}
Without loss of generality, we may assume that $X$ is connected, and hence the geometric fibers of $A$ over $X$ are all of the same form, say $A_{\overline x} \cong \prod_i C_{n_i}$ for some fixed set of natural numbers $n_1, \ldots, n_r$.  
Let $\mathcal I$ be the \'etale sheaf on $X$ given by
\[\mathcal I(U) = \{\phi: A_U \to \prod_i \mu_{n_i, U} \mid \phi \text{ is an isomorphism}\}.\]
As $\mathcal I(U)$ is a sheaf-theoretic right $A$-torsor, it follows that it is representable by a finite scheme $Y$, \'etale over $X$. Further, we see that $A_Y \cong \prod_i \mu_{n_i, Y}$ by construction. In particular, via the embeddings $\mu_{n_i} \to \mbb G_m$, we can embed $A_Y$ into a split torus $T = \mbb G_{m, Y}^r$ and so we get an embedding of Weil restrictions $R_{Y/X} A \to R_{Y/X} T$. 
Combining this with the embedding given by the unit of the adjunction $A \to R_{Y/X} A_Y$ (this is a twisted form of the diagonal embedding $A \to A^N$), we obtain an embedding $A \to R_{Y/X}T$ of $A$ into a relative torus. But now by \cite[Prop.1.3(eq.~1.3.1)]{CTS3}, we find that we can embed $R_{Y/X}T$ into a relative quasitrivial torus, completing our proof.
\end{proof}

Theorem \ref{thm:semisimple} follows by repeating the proof of  Proposition \ref{prop:cyclic-odd} in families:
\begin{proof}[Proof of Theorem \ref{thm:semisimple}]
Suppose $\psi_0,\psi_1\in\HG^1(K,C\rtimes H)$  are classes whose images $\varphi_0,\varphi_1\in\HG^1(K,H)$ are $R$-equivalent. We claim that $\psi_0,\psi_1$ are $R$-equivalent as well. 
Since $\varphi_0,\varphi_1$ are $R$-equivalent, their lifts $\varphi_0^*, \varphi_1^*\in \HG^1(K,C\rtimes H)$ via the  given natural section $H\ra C\rtimes H$ are also $R$-equivalent.
Thus, it suffices to show $\varphi_i^*$ is $R$-equivalent to $\psi_i$, $i=0,1$. Since the fiber of $\HG^1(K,C\rtimes H)\ra \HG^1(K,H)$ over $\varphi_i$  coincides with $\HG^1(K,{}_{\varphi_i}C)$ as in \S\ref{sec:twists}, and $\HG^1(K,{}_{\varphi_i}C)$ is $R$-trivial by Proposition \ref{prop:cyclic-odd},  $\varphi_i^*$ and $\psi_i$ are $R$-equivalent, for $i=1,2$. 

To obtain a dense parametrization,  suppose $Y = \Spec(K[t_1, \ldots, t_m][f^{-1}])$ and suppose  $\phi \in \HG^1(Y, H)$  corresponds to a densely $Y$-parametrized family of $H$-torsors $\mathcal M \to Y$.
Say that a $(C \rtimes H)$-torsor $E/K$ lies over $\phi$ if the image $H$-torsor $E^C \in \HG^1(K, H)$ is the specialization of $\phi$ at a dense subset of $y \in Y(K)$. 

We  construct a rational space $\mathcal Q$ with a morphism $\pi: \mathcal Q \to Y$ and a  
($C \rtimes H$)-torsor $\mathcal E$ over $\mathcal Q$ such that for every ($C \rtimes H$)-torsor $E$ over $K$ which lies over $\phi$, there exists a dense subset of $q \in \mathcal Q(K)$ with $\mathcal E_q \cong E$. Since $\HG^1(K,H)$ is parametrized by $r$ rational spaces $Y$,  it would follow that so is $\HG^1(K,C\rtimes H)$. To construct $\mathcal Q$,  consider the group scheme $_\phi C$ over $Y$ which  embeds in a relative quasitrivial torus $\mathcal P$ over $Y$ with quotient $\mathcal P / _\phi C = \mathcal T$ by \cref{relative embedding}, 
yielding a short exact sequence of $Y$-group schemes: 
\[0 \to \mbox{}_\phi C \to \mathcal P \to \mathcal T \to 0.\]
In particular,  for any $y \in Y(K)$, Hilbert's theorem 90 gives the  surjectivity of the map: 
\begin{equation}\label{equ:surj-families}
\mathcal T_y(K) \twoheadrightarrow \HG^1(K, (\mbox{}_\phi C)_y)
\end{equation}
where $(\mbox{}_\phi C)_y = \mbox{}_{\phi_y} C$ is the fiber of the group scheme $\mbox{}_\phi C \to Y$ over $y$.
We may now construct a flasque resolution as in \cite[Prop.~1.3]{CTS3} for $\mathcal T$ to obtain a sequence
\[
0 \to \mathcal F \to \mathcal Q \to \mathcal T \to 0, 
\]
with relative quasitrivial tori $\mathcal Q,\mathcal F$, and  flasque fibers $\mathcal F_y$ for all $y\in Y(K)$. Moreover, as in the proof of Prop.\ \ref{prop:cyclic-odd}, since $C$ is cyclic of odd order, $\mathcal F_y$ is a direct factor of a quasitrivial torus and hence  
$\HG^1(K,\mathcal F_y)=1$ and $\mathcal Q_y(K)\to \mathcal T_y(K)$ is onto, for $y\in Y(K)$. Let $\pi:\mathcal Q\to Y$ be the associated $Y$-parametrization.

The pullback of $\mathcal P$ along the map $\mathcal Q\to\mathcal T$ induces a ${}_{\phi}C$-torsor over $\mathcal Q$.
Let $\mathcal E$ be the $(C\rtimes H)$-torsor corresponding to its image under  $\HG^1(\mathcal Q,{}_\phi C)\to \HG^1(\mathcal Q,{}_\phi(C\rtimes H))$. 
We claim that $\mathcal E\to\mathcal Q$ densely parametrizes the $(C\rtimes H)$-torsors $E/K$ lying over $\phi$. Indeed, recalling that $\mathcal M\to Y$  is a dense parametrization of a subset of $\HG^1(K,H)$, the image $E^C\in\HG^1(K,H)$
occurs as the fiber of $\mathcal M\to Y$ over a dense subset of $y\in Y(K)$. 
As in \eqref{equ:fiber-twist}, $E$ is then in the image of the map 
$\HG^1(K,({}_{\phi}C)_y)\to \HG^1(K,({}_\phi(C\rtimes H))_y)$. Since $\mathcal T_y(K)$ surjects onto $\HG^1(K,({}_{\phi}C)_y)$ by  \eqref{equ:surj-families}, and since 
$\mathcal Q_y(K)\to \mathcal T_y(K)$ is onto, we get that $\mathcal E/\mathcal Q$ in $\HG^1(\mathcal Q,(C\rtimes H))$ specializes to $E/K$ at a dense set of $K$-rational preimages in  $\pi^{-1}(y)\subseteq \mathcal Q$.
Thus  $\mathcal E\to \mathcal Q$ densely parametrizes $(C\rtimes H)$-torsors over $K$ which lie over $\phi$. Moreover, $\mathcal Q$ is rational since $Y$ is and the fibers of $\pi:\mathcal Q\to Y$ are, proving the claim. 
\end{proof}

\section{Torsors for cyclic $2$-groups}
In this section, we prove Theorems \ref{thm:2-power-cyclic} and \ref{thm:2-power}.

\subsection{Versal tori and their flasque resolutions} 
In this section, we will be interested in parametrizing and studying $R$-equivalence for torsors for certain finite group schemes which are twisted forms of 2-power order cyclic groups.
Let $K$ be a field of characteristic $\neq 2$ and $F=K[\sqrt{a}]$ for $a\in K^\times$. 
Suppose $C=C_{2^s}^{(a,\eps)}$ is the $\Gamma_K$-module $C_{2^s}=\langle c \mid c^{2^s}=1\rangle$ equipped with the action of $\Gal(F/K)$ via $\kappa\cdot c=c^\eps$, for the nontrivial $\kappa\in \Gal(F/K)$ and $\eps\in \{\pm 1\}$.  Let $\eta_s=\zeta_{2^s}+\zeta_{2^s}^{-1}$ 
and $E:=K(\eta_s)$. 
Set $s_0=[E:K]$, so that  $s_0\divides 2^{s-2}$. 
Let $L=E[\sqrt{-1},\sqrt{a}]$. Then $\Gal(L/K)=\langle\sigma,\tau,\kappa\rangle$, where
$\sigma,\kappa$ are the involutions fixing $E[\sqrt{a}], E[\sqrt{-1}]$, respectively\footnote{Note that $L$ is a ring which coincides with the field $F(\mu_{2^s})$ if $F$, $K(\eta_s)$, and $K[\sqrt{a}]$ are linearly disjoint fields over $K$.},  and $\tau$ is a generator of $\Gal(L/K[\sqrt{a},\sqrt{-1}])$.  
Let $M_{\eps}$ be the subring fixed by $\sigma^{(1-\eps)/2}\kappa$, that is, $M_1=E[\sqrt{-1}]$ is fixed by $\kappa$ and $M_{-1}=E[\sqrt{-a}]$ is fixed by $\sigma\kappa$. 
$$
\xymatrix@C=.1cm{
 & & L=K(\eta_s)[\sqrt{a}, \sqrt{-1}] \ar@{-}[drr]^{\kappa}\ar@{-}[d]^{\sigma}\ar@{-}[dll]_{\tau}& & \\
 K[\sqrt{a},\sqrt{-1}] \ar@{-}[dr] &  & K(\eta_s)[\sqrt{a}]   \ar@{-}[dl]   \ar@{-}[dr] & & M_1=K(\eta_s)[\sqrt{-1}]\ar@{-}[dl] \\
 & F=K(\sqrt a) & & E=K(\eta_s) & 
}
$$

Henceforth, to describe a torus $T$ over $K$, we describe $T(K')$ for every extension $K'/K$. 

\begin{lemma}\label{lem:embedding}
There is a $K$-embedding  of $C$ into the quasitrivial torus $P=\Res_{M_\eps/K}\mG_m$. For a finite extension $F'/F$ it sends  a generator $c\in C(F')$  to a primitive $2^s$-th root of unity $\zeta_{2^s}\in (L\otimes_K F')^\times$ via an identification $P(F')\cong (L\otimes_K F')^\times$. 
\end{lemma}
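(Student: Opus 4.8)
The plan is to exhibit $\zeta_{2^s}$ concretely inside $L$, construct the embedding after base change to $F=K[\sqrt a]$ (where the twisted group $C$ becomes constant), and then descend it to $K$ by checking a single Galois-equivariance identity. First I would record the half-angle identity that places $\zeta_{2^s}$ in $M_1=E[\sqrt{-1}]\subseteq L$: setting $\theta_k=\zeta_{2^k}-\zeta_{2^k}^{-1}$ one has $\theta_{k-1}=\theta_k\eta_k$ and $\theta_2=2\sqrt{-1}$, hence
\[\theta_s=\frac{2\sqrt{-1}}{\eta_3\eta_4\cdots\eta_s},\qquad \zeta_{2^s}=\tfrac12\eta_s+\tfrac12\theta_s=\frac{\eta_s}{2}+\frac{\sqrt{-1}}{\eta_3\cdots\eta_s}.\]
Since $\eta_{k}=\eta_{k+1}^2-2$, the elements $\eta_3,\dots,\eta_s$ all lie in $E=K(\eta_s)$ and are units when $\charak K\neq 2$; thus $\zeta_{2^s}\in M_1\subseteq L$, and it is a primitive $2^s$-th root of unity.

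Next I would identify $P$ after base change along $F/K$. In both cases $M_\eps\otimes_K F\cong L$ as $F$-algebras: for $\eps=1$ because $E[\sqrt{-1}]\otimes_K K[\sqrt a]\cong E[\sqrt{-1},\sqrt a]=L$, and for $\eps=-1$ because $E[\sqrt{-a}]\otimes_K K[\sqrt a]\cong E[\sqrt{-a},\sqrt a]=E[\sqrt{-1},\sqrt a]=L$ via $\sqrt{-1}=\sqrt{-a}/\sqrt a$. Therefore $P_F=P\times_K F\cong\Res_{L/F}\mG_m$, and $P(F')=(M_\eps\otimes_K F')^\times\cong(L\otimes_F F')^\times$ for every $F'/F$. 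Over $F$ the group $C=C_{2^s}^{(a,\eps)}$ is the constant cyclic group $C_{2^s}$, so by the adjunction between restriction of scalars and base change,
\[\Hom_{F\text{-gp}}\bigl(C_F,\Res_{L/F}\mG_m\bigr)=\Hom_{L\text{-gp}}\bigl(C_{2^s},\mG_{m,L}\bigr)=\mu_{2^s}(L),\]
and I would take $f\colon C_F\to P_F$ to be the $F$-morphism sending a generator $c$ to $\zeta_{2^s}$; this is a closed embedding because $\zeta_{2^s}$ has exact order $2^s$.

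It then remains to descend $f$ along the quadratic cover $F/K$, with $\Gal(F/K)=\langle\kappa\rangle$. The canonical descent datum on $\Res_{L/F}\mG_m$ is induced by the $\kappa$-semilinear ring automorphism $\kappa_L:=\sigma^{(1-\eps)/2}\kappa$ of $L$ (the one whose fixed ring is $M_\eps$ and whose restriction to $F\subseteq L$ is $\kappa$), while the descent datum on $C_F$ coming from the twist is $c\mapsto c^\eps$; so $f$ descends to a $K$-embedding $C\hookrightarrow P$ precisely when $\kappa_L(\zeta_{2^s})=\zeta_{2^s}^\eps$. For $\eps=1$ this is immediate, as $\kappa_L=\kappa$ fixes $M_1=E[\sqrt{-1}]\ni\zeta_{2^s}$. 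For $\eps=-1$ it follows from the half-angle formula, since $\kappa_L=\sigma\kappa$ fixes $E$ and negates $\sqrt{-1}$, hence sends $\frac{\eta_s}{2}+\frac{\sqrt{-1}}{\eta_3\cdots\eta_s}$ to $\frac{\eta_s}{2}-\frac{\sqrt{-1}}{\eta_3\cdots\eta_s}=\zeta_{2^s}^{-1}$. Since being a closed immersion descends along the faithfully flat cover $\Spec F\to\Spec K$, the resulting $K$-morphism $C\to P$ is a closed embedding with the asserted effect on $F'$-points.

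I expect the only genuinely delicate point to be the bookkeeping of the two descent data — pinning down that the descent datum on $\Res_{L/F}\mG_m$ is given by $\sigma^{(1-\eps)/2}\kappa$, and then verifying $\kappa_L(\zeta_{2^s})=\zeta_{2^s}^\eps$, which is exactly where the explicit expression for $\zeta_{2^s}$ in terms of $\eta_s$ and $\sqrt{-1}$ pays off. The remaining algebra (the isomorphisms $M_\eps\otimes_K F\cong L$, the adjunction, and descent of closed immersions) is routine, with $\charak K\neq 2$ used to keep $F/K$ separable and the $\eta_k$ invertible.
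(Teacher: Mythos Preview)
Your proof is correct and follows the same overall strategy as the paper: construct the embedding over $F$ (where $C$ becomes constant and $P_F\cong\Res_{L/F}\mG_m$), then descend to $K$ by verifying the single Galois-equivariance condition $\kappa_L(\zeta_{2^s})=\zeta_{2^s}^{\eps}$ with $\kappa_L=\sigma^{(1-\eps)/2}\kappa$.

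The one noteworthy difference is in how the $\eps=-1$ case is checked. You derive the half-angle identity $\zeta_{2^s}=\tfrac{\eta_s}{2}+\tfrac{\sqrt{-1}}{\eta_3\cdots\eta_s}$ and then compute $\sigma\kappa$ on it directly. The paper instead argues abstractly: since $\zeta_{2^s}\in M_1=L^\kappa$, the automorphism $\kappa$ fixes $\zeta_{2^s}$, and since $\sigma$ is by definition complex conjugation on $\mu_{2^s}$, one has $\sigma\kappa(\zeta_{2^s})=\sigma(\zeta_{2^s})=\zeta_{2^s}^{-1}$. This avoids the explicit formula entirely. Your route is more concrete (and the formula is pleasant to have on record), while the paper's is shorter and uses only the defining properties of $\sigma$ and $\kappa$. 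The extra formalism you supply---the adjunction identifying $\Hom_F(C_F,\Res_{L/F}\mG_m)$ with $\mu_{2^s}(L)$ and the remark that closed immersions descend along faithfully flat base change---is correct and fills in details the paper leaves implicit.
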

\begin{proof}
First, we identify $P(F)$ with $(M_\eps\otimes_K F)^\times$ so that it is isomorphic to $L^\times$ under the natural isomorphism  $\iota:M_\eps\otimes_K F\ra  L$. 
To show that the embedding $C\ra P(F)$, $c\mapsto \zeta_{2^s}\in L^\times$ is defined over $K$, it suffices to show that it preserves the $\kappa$-action (resp.\ $\Gal(F/K)$-action), that is,  $(1\otimes \kappa)$ maps $\iota^{-1}(\zeta_{2^s})$ to $\iota^{-1}(\zeta_{2^s})^\eps$.  

For $\eps=1$, $M_1=E[\sqrt{-1}]$ is fixed by $\kappa$ and hence its  image $ֿ\iota(M_1\otimes 1)$ under the isomorphism $\iota:M_1\otimes_K F\ra L$ is indeed fixed by $1\otimes\kappa$, as needed. 
For $\eps=-1$, the fixed subring of $1\otimes\kappa$ is $M_{-1}\otimes_KK$ which is mapped under $\iota$ to the fixed subring $M_{-1}=L^{\sigma\kappa}$. Thus, under $\iota$, the Galois action of $1\otimes \kappa$ on $M_{-1}\otimes_K F$ is equivalent to the Galois action of $\sigma\kappa$ on $L$.  Since $\sigma\kappa(\zeta_{2^s})=\zeta_{2^s}^{-1}\in L$ (as $\kappa$ acts trivially on $\zeta_{2^s}\in L$ and $\sigma$ is complex conjugation),  we get that $(1\otimes \kappa)(\iota^{-1}(\zeta_{2^s}))=\iota^{-1}(\zeta_{2^s})^{-1}$, as needed. 
\end{proof}
Henceforth, identify $C=\langle c\rangle$ as a group subscheme of $P=\Res_{M_\eps/K}\mG_m$ using the map from Lemma \ref{lem:embedding}. We now describe the quotient $P/C$. 
Write $\tau(\zeta_{2^s})=\zeta_{2^s}^m$ for $m$ of order $s_0$ mod $2^s$, so that $(m^{s_0}-1)/2^s$ is an odd number\footnote{This is equivalent to choosing $m$ of order $s_0$ mod $2^s$ and so that its order  mod $2^{s+1}$ is larger.}.

 Henceforth, abusing notation, we identify automorphisms $\sigma\in \Gal(L/K)$ with their corresponding automorphisms $1\otimes \sigma$ of $(K'\otimes_K L)/K$. Recall that $N_\sigma(x)=x\cdot\sigma(x)$.
\begin{lemma}\label{lem:quotient}
    Let $T$ be the torus over $K$ with: 
    $$T(K')\cong \{(p,q)\in (M_\eps\otimes_K K')^\times\times (E\otimes_K K')^\times \,|\,  N_\sigma(p)=\tau(q)/q\},$$ 
    for every finite extension $K'/K$. Then the surjection  $\pi_0:P\ra T$ given by:
    $$(M_\eps\otimes_K K')^\times \ni \beta\mapsto \left(\frac{\tau\beta}{\beta^m}N_\sigma(\beta)^{(m-1)/2}, N_\sigma(\beta)\right),$$
    has a cyclic kernel of order $m^{s_0}-1$ whose $2$-Sylow subgroup is $C$. 
\end{lemma}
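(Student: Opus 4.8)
The plan is to verify directly that $\pi_0$ is a well-defined homomorphism of tori over $K$, compute its kernel scheme-theoretically over $\sep K$, and then identify the $2$-primary part of that kernel with the embedded copy of $C$.

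First I would check well-definedness: given $\beta\in (M_\eps\otimes_K K')^\times$, one must see that the pair $\left(\frac{\tau\beta}{\beta^m}N_\sigma(\beta)^{(m-1)/2},\ N_\sigma(\beta)\right)$ actually lands in $T(K')$, i.e.\ that its first coordinate lies in $(M_\eps\otimes K')^\times$, its second in $(E\otimes K')^\times$, and that they satisfy $N_\sigma(p)=\tau(q)/q$. The second coordinate $N_\sigma(\beta)=\beta\sigma(\beta)$ is $\sigma$-invariant, hence lies in $(E\otimes K')^\times$ since $E=M_\eps^\sigma$. For the compatibility relation, apply $N_\sigma=1+\sigma$ to the first coordinate: using that $\sigma$ and $\tau$ act on $\zeta_{2^s}$ by $-1$ and $m$ respectively (so they commute on the relevant module up to the presentation of $\Gal(L/K)$), and that $N_\sigma$ kills the factor $\frac{\tau\beta}{\beta^m}$ up to the expected terms, one gets $N_\sigma(p)=\frac{\tau N_\sigma(\beta)}{N_\sigma(\beta)}=\tau(q)/q$; this is the routine cocycle-style computation I would not grind through. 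That $\pi_0$ is defined over $K$ follows because it is given by a formula in the $\Gal(L/K)$-action that is manifestly $\Gamma_K$-equivariant (it commutes with $\kappa$ by the way $M_\eps$ was chosen, exactly as in Lemma~\ref{lem:embedding}), and it is a group homomorphism because each operation ($\tau$, raising to the $m$-th power, $N_\sigma$) is multiplicative.

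Next, compute $\ker\pi_0$ over $\sep K$. Over the separable closure $P$ becomes a split torus $\mathbb G_m^{[M_\eps:K]}$, and $\beta$ is determined by its images $\beta,\sigma\beta,\tau\beta,\dots$ under the coordinatewise action; the condition $\pi_0(\beta)=1$ forces both $N_\sigma(\beta)=1$, i.e.\ $\sigma(\beta)=\beta^{-1}$, and $\tau(\beta)=\beta^m$. Applying $\tau$ repeatedly gives $\tau^{s_0}(\beta)=\beta^{m^{s_0}}$, and since $\tau^{s_0}$ acts trivially on $L$ this yields $\beta^{m^{s_0}-1}=1$; conversely any $(m^{s_0}-1)$-th root of unity $\beta$ with $\sigma\beta=\beta^{-1}$, $\tau\beta=\beta^m$ lies in the kernel (these two relations are automatically compatible with $\sigma\kappa$ or $\kappa$ because $\kappa$ fixes roots of unity in $L$ while $m\equiv -1$ is excluded by our choice of $m$). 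Hence $\ker\pi_0\cong \mu_{m^{s_0}-1}$ as a group, and as a $K$-group it is the subgroup of $P$ cut out by these equations, which is cyclic of order $m^{s_0}-1$.

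Finally, identify the $2$-Sylow subgroup. Since $\ker\pi_0$ is cyclic of order $m^{s_0}-1$, its $2$-Sylow subgroup is $\mu_{2^{v}}$ where $2^{v}\,\|\,m^{s_0}-1$. By the choice of $m$ — of order exactly $s_0$ modulo $2^s$ but of strictly larger order modulo $2^{s+1}$, equivalently $(m^{s_0}-1)/2^s$ odd — we have $v=s$. The copy of $C=C_{2^s}$ embedded in $P$ via Lemma~\ref{lem:embedding} is exactly $\langle\zeta_{2^s}\rangle$, which satisfies $\sigma(\zeta_{2^s})=\zeta_{2^s}^{-1}$ and $\tau(\zeta_{2^s})=\zeta_{2^s}^{m}$ (the defining equations of the kernel) and has order $2^s=2^v$, so it is precisely the $2$-Sylow subgroup of $\ker\pi_0$. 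I expect the main obstacle to be the bookkeeping in the well-definedness check — keeping track of the half-integer exponent $(m-1)/2$ (which is an integer since $m$ is odd) and verifying the norm compatibility $N_\sigma(p)=\tau(q)/q$ cleanly in the ring $M_\eps\otimes_K K'$ rather than in a field — together with confirming that surjectivity of $\pi_0$ holds (which follows from a dimension/character-lattice count, since $\dim P=\dim T+\dim\ker\pi_0$ and $\ker\pi_0$ is finite, so $\pi_0$ is a quotient map).
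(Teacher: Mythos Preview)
Your proposal is correct and follows essentially the same route as the paper: reduce the kernel condition to $\sigma(\beta)=\beta^{-1}$ and $\tau(\beta)=\beta^m$, apply $\tau^{s_0}=1$ to get $\beta^{m^{s_0}-1}=1$, identify the $2$-Sylow with $C$ via the choice of $m$, and deduce surjectivity by dimension. The only cosmetic difference is that the paper first maps $P$ to an auxiliary torus $T'$ (defined by $N_\sigma(p)=\tau(q)/q^m$) via the simpler formula $\beta\mapsto(\tau\beta/\beta^m,\,N_\sigma\beta)$, computes the kernel there without the $N_\sigma(\beta)^{(m-1)/2}$ factor, and then exhibits the isomorphism $T'\to T$, $(p,q)\mapsto(pq^{(m-1)/2},q)$ at the end---this sidesteps exactly the bookkeeping you flagged as the main obstacle.
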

\begin{proof}
Consider the torus $T'$ over $K$ given by 
$$T'(K')=\{(p,q)\in (M_\eps\otimes_K K')^\times\times (E\otimes_K K')^\times \,|\,  N_\sigma(p)=\tau(q)/q^m\}.$$ 
Consider the map $P(K')\ra T'(K')$ given by:
\begin{equation}\label{equ:map-to-T}
    (M_\eps\otimes_K K')^\times \ni \beta \mapsto \left(  \frac{\tau(\beta)}{\beta^m}, N_\sigma(\beta)\right).
\end{equation}  Its kernel consists of the $\beta\in P(K')$ such that $\sigma(\beta)=\beta^{-1}$, $\tau(\beta)=\beta^m$, and 
    as $\tau^{s_0}=1$,  one has $\beta^{m^{s_0}}=\tau^{s_0}(\beta)=\beta$, so that $\beta$ is an $(m^{s_0}-1)$-th root of unity. Setting $G:=\Gal(M_\eps/K)$,  we shall henceforth identify $P(\oline K)$ with the $G$-th power $\oline K^{G}$  via the map which sends $\beta\otimes 1$ 
    to a vector whose $\eta$-coordinate is $\eta(\beta)$ for every $\eta\in G$. 
    Via this identification $\sigma$ and $\tau$ (or more precisely, their images in $\Gal(M_\eps/K)$) act by permuting the coordinates and acting on the entries, that is, $\sigma\cdot (a_\eta)_{\eta\in G}=(\sigma(a_{\sigma^{-1}\eta}))_{\eta\in G}$ as  in Section \ref{sec:setup}. 
    Since $\sigma(\beta)=\beta^{-1}$ and $\tau(\beta)=\beta^{m}$ for $\beta$ in the kernel, the kernel consists of vectors $(a_\eta)_{\eta\in \Gal(M_\eps/K)}$ such that $a_1$ is a root of unity of order dividing $m^{s_0}-1$, and $a_{\sigma\eta} = \sigma(a_{\eta})^{-1}$ and $a_{\tau\eta}=\tau^{-1}(a_\eta)^{m}$ for every $\eta\in G$. Thus, the kernel is cyclic of order dividing $m^{s_0}-1$. 
    
    As this is also the action of $\sigma$ and $\tau$ on $C\subseteq P(F)$, we get that $C$ is contained in the kernel. 
    Since $2^s$ is the maximal $2$-power dividing $m^{s_0}-1$ by our choice of $m$, and since $C$ is contained in the kernel of the map, it follows that $C$ is the $2$-Sylow subgroup of this kernel. Finally, note that since $\dim P=\dim T'=4$ and our map $P\ra T'$ has a finite kernel, it is dominant and hence, as a finite dominant map between linear algebraic groups, surjective \cite[\S 4.2]{Hum}. 
    
     Finally, note that  the above torus $T'$ is isomorphic to $T$ via the change of variables map $T'(K')\ra T(K')$, $(p,q)\ra (p\cdot q^{(m-1)/2},q)$. Indeed, as $\sigma q=q$, one has $N_\sigma(q)=q^2$ for $q\in E\otimes K'$, and hence:
     $$N_\sigma(pq^{(m-1)/2})=N_\sigma(p)q^{m-1}=\frac{\tau(q)}{q^m}\cdot q^{m-1}=\frac{\tau{q}}{q},$$
     so that the image lies in $T$. Since the map is clearly invertible, it is an isomorphism. 
     Composing this isomorphism with the map \eqref{equ:map-to-T} gives the desired epimorphism. 
\end{proof}
    

We next take a flasque resolution of $T$. Let $Q$ be the quasi-split torus $\Res_{M_\eps/K}\mathbb G_m\times \mathbb G_m\times \Res_{M_\eps/K}\mathbb G_m$ so that $Q(K')\cong (M_\eps\otimes_K K')^\times \times K'^\times\times (M_\eps\otimes_K K')^\times$ for a finite extension $K'/K$. Consider the homomorphism $\pi:Q\ra T$ given on $K'$-points by 
$$(s,t,r)
\in (M_\eps\otimes_K K')^\times\times K'^\times\times (M_\eps\otimes_K K')^\times \mapsto \left(\frac{\tau(s)}{s}\frac{r}{\sigma(r)}, \frac{N_\sigma(s)}{t}\right).$$
It is straightforward to check that its image $(p,q)$ indeed satisfies $N_\sigma(p)=\tau(q)/q$. 
\begin{lemma}\label{lem:flasque}
The kernel $S:=\ker\pi$ is a flasque torus with $K'$-points $$S(K')\cong \left\{(s,r)\in (M_\eps\otimes_K K')^\times \times (M_\eps\otimes_K K')^\times\,|\, \frac{\sigma(r)}{r}=\frac{\tau(s)}{s}\right\}.$$
\end{lemma}
\begin{remark}\label{rem:ker}
The proof uses the following explicit form of the character lattice $\X(S)$: It is spanned by a basis $a,\tau a, \ldots,\tau^{s_0-1}a$, $b, \tau b,\ldots, \tau^{s_0-1}b,$ and $c$, where $a$ and $b$ are the projections to the $r$-coordinate and $s$-coordinate, resp., and $c$ is the map $(r,s)\mapsto t=N_\sigma(s)$. Setting $R:=\Z[\sigma,\tau]$, the resulting  $R$-action is as follows: 
 $\tau$ permutes $a,\tau a, \ldots,\tau^{s_0-1}a$ (resp.\ $b,\tau b, \ldots,\tau^{s_0-1}b$);  $\sigma b=-b+c$ (since $N_\sigma s=t$);   $\sigma a = a+\tau b-b$ (since $\sigma(r)/r=\tau(s)/s$); and $c$ is fixed by $R$: For,  $t=N_\sigma(s)$ is clearly invariant under $\sigma$ and it is $\tau$-invariant since $\tau(t)/t=N_\sigma(\tau(s)/s)=N_\sigma(\sigma(r)/r)=1$. 
\end{remark}
\begin{proof}[Proof of Lemma \ref{lem:flasque}]
By definition of $\pi$, the kernel is:
$$S(K')= \left\{(s,t,r)\in (M_\eps\otimes_K K')^\times \times K'^\times\times (M_\eps\otimes_K K')^\times\,|\,N_\sigma(s)=t, \, \frac{\sigma(r)}{r}=\frac{\tau(s)}{s}\right\},$$
and the isomorphism is obtained by forgetting $t=N_\sigma(s)$. 

We check next that $S$ is flasque by showing that $\HGH^{-1}(K,\X(S))=1$. To do so, we compute the kernels of the $H$-norm maps for all subgroups $H$ of $G_\eps:= \Gal(M_\eps/K)$. 

We start with cyclic groups $H$. For $H=\langle \tau^d\rangle$, where $d$ divides the order $s_0$ of $\tau$, the $H$-module $\X(S)$ splits as the direct sum of permutation $H$-modules and hence, in this case, the claim follows from Hilbert's Theorem 90, as in \S\ref{sec:flasque}. 

As this covers all subgroups in case $s_0=1$, henceforth assume $s_0\geq 2$. Next, we consider the other cyclic degree-$s_0$ subgroup  $H=\langle\sigma\tau\rangle$ and follow the description of $\X(S)$ from Remark \ref{rem:ker}. One has $N_H(c) = s_0\cdot c$,
\begin{align*}
    N_H(a) & = a+\sigma\tau a+\tau^2a+\cdots \sigma\tau^{s_0-1}a \\
    & = \sum_{i=0}^{s_0/2}(\tau^{2i}a) + \sum_{i=0}^{s_0/2}\left(\tau^{2i+1}a+\tau^{2i+1}(\tau-1)b\right)\\
    & = N_\tau(a)+N_{\tau^2}(\tau-1)\tau(b), \text{ and similarly:}\\
    N_H(b) & = N_{\tau^2}(1-\tau)b + \frac{s_0}{2}\cdot c.
\end{align*}
Thus, the kernel of $N_H$ is spanned by $b+\tau b-c, a-\tau a+\tau(b-\tau b),$ and their images under powers of $\tau$. 
These are principal elements since: $b+\tau b-c = (1-\sigma\tau)b$, $a-\tau a +\tau b-\tau^2b=(1-\sigma\tau)a$, and hence their images under powers of $\tau$ are also in $\Image{(1-\sigma\tau)}$, as claimed. 

Now consider the $G_\eps$-norm. One has $N_{G_\eps}(a)=2N_\tau(a)$, $N_{G_\eps}b=s_0\cdot c$, and $N_{G_\eps}c=2s_0\cdot c$. Thus, the kernel of $N_{G_\eps}$ is then spanned by $a-\tau a, 2b-c$ and their $\tau$-orbits. These are principal elements since $a-\tau(a)=(1-\tau)a$ and $2b-c=(1-\sigma)b$, and hence so are their $\tau$-orbit. 

Next note that the same arguments also give $\HGH^{-1}(G_\eps,\Ind_{s_0}^{ks_0} \X(S))=1$ where $\Ind_{s_0}^{ks_0} \X(S)$ is the module obtained by replacing $a$ and $b$ by $k$ copies $a_1,\ldots,a_k$, $b_1,\ldots,b_k$ of each. In other words,  $\Ind_{s_0}^{ks_0} \X(S)$ is the module spanned by $c$, $\tau^ja_i$ and $\tau^j b_i$, $i=1,\ldots,k$, $j=0,\ldots,s_0-1$,  where $c$ is fixed by $G_\eps$, $\tau$ permutes the $\tau$-orbits of $a_i$ and $b_i$, and $\sigma a_i=a_i+\tau b_i-b_i$, $\sigma b_i = -b_i+c$. 

Upon restricting to the subgroup $H=\langle\sigma,\tau^d \rangle$ for $d\divides s_0$, $d\neq s_0$, the $H$-module $\X(S)$ becomes isomorphic to $\Ind_{s_0/d}^{s_0} \X(S_{s_0/d})$, where $S_{s_0/d}$ is the torus obtained from $S$ by replacing $s_0$ by $s_0/d$. Hence upon replacing  $G_\eps$ by $H$ in the previous paragraph, we get
$\HGH^{-1}(H,\X(S))=\HGH^{-1}(H,\Ind_{s_0/d}^{s_0}\X(S_{s_0/d}))=1.$ Similarly for $H=\langle \sigma\tau^d\rangle$, we have  
$\HGH^{-1}(H,\X(S))=\HGH^{-1}(H,\Ind_{s_0/d}^{s_0}\X(S_{s_0/d}))=1$ upon replacing $\langle \sigma\tau\rangle$ by $\langle\sigma\tau^d\rangle$.  This covers all subgroups of $G_\eps$, proving that $S$ is flasque. 
\end{proof}
Finally, we deduce from the above:
\begin{cor}\label{cor:R-equiv}
    There is a bijection $\HG^1(K,C)/R\cong \HG^1(K,S)$. 
\end{cor}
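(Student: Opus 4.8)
The plan is to assemble Corollary~\ref{cor:R-equiv} directly from the two structural facts already established in this subsection together with Lemma~\ref{lem:BG-r-equiv}. By Lemma~\ref{lem:embedding} we have a $K$-embedding $C \hookrightarrow P$ into the quasitrivial torus $P = \Res_{M_\eps/K}\mG_m$, and by Lemma~\ref{lem:quotient} the quotient $P/C$ is a torus $T$ (more precisely, $P \to T$ has a finite cyclic kernel of order $m^{s_0}-1$ whose $2$-Sylow subgroup is exactly $C$; since $C = C_{2^s}$ is a $2$-group and we only care about its torsors, I would either pass to the $2$-primary part of the kernel or simply invoke that a quasitrivial torus mod $C$ is again a torus by Lemma~\ref{lem:quotient}). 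Thus we have an exact sequence $1 \to C \to P \to T \to 1$ realizing $P \to T$ as a $C$-torsor over $T$ with $P$ quasitrivial, which is precisely the input datum \eqref{equ:quasi-trivial} needed for the second assertion of Lemma~\ref{lem:BG-r-equiv}.

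Next I would feed in the flasque resolution. Lemma~\ref{lem:flasque} exhibits a short exact sequence $1 \to S \to Q \to T \to 1$ with $Q = \Res_{M_\eps/K}\mG_m \times \mG_m \times \Res_{M_\eps/K}\mG_m$ quasitrivial and $S$ flasque; this is exactly a flasque resolution of $T$ in the sense of \eqref{equ:flasque-res}. (One should double-check that the map $\pi\colon Q\to T$ of Lemma~\ref{lem:flasque} is surjective as a morphism of tori, not merely on $K'$-points; this follows as in Lemma~\ref{lem:quotient} because $Q$ and $T$ both have dimension $4$ over $M_\eps$-split and the kernel $S$ has dimension $2s_0$... actually one checks $\dim Q = 2s_0+1$ and $\dim T = 2s_0$ so $\dim S = 1$—wait, this needs care; the cleaner route is that $\pi$ is dominant since it is on $\oline K$-points, and a dominant homomorphism of tori is surjective.) Now Lemma~\ref{lem:BG-r-equiv} applies verbatim: for a quasitrivial torus $P$ and a finite abelian $K$-group $C$ such that $P \to T$ is a $C$-torsor, together with any flasque resolution $S \to Q \to T$ of the same $T$, one has $\HG^1(K,C)/R \cong \HG^1(K,S)$. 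This is the desired bijection.

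The only genuine subtlety — and the step I would treat most carefully — is reconciling the fact that in Lemma~\ref{lem:quotient} the kernel of $P \to T$ is $C_{m^{s_0}-1}$ rather than $C$ itself. There are two clean fixes. Either: factor $P \to T$ through the intermediate quotient $P \to P/C' \to T$ where $C'$ is the prime-to-$2$ part of the kernel, observe that $P/C'$ is still a quasitrivial torus (being a quotient of a quasitrivial torus by a finite subgroup, cf.\ the argument in Lemma~\ref{lem:quotient} that any such quotient is a torus, combined with the fact that restriction-of-scalars tori stay quasitrivial under such isogenies—this is the step needing the most justification), and then $P/C' \to T$ is a genuine $C$-torsor. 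Alternatively, and more robustly, one can simply take $T$ as in Lemma~\ref{lem:quotient}, note that its class in $\HG^1(K,C)$-parametrization only sees the $2$-part, and appeal to the fact that for computing $R$-equivalence on $C = C_{2^s}$-torsors one may always replace the ambient embedding by its $2$-primary reduction. In the write-up I would phrase it as: let $P' = P/C'$ where $C' \leq \ker(P\to T)$ is the subgroup of order $(m^{s_0}-1)/2^s$; then $P'$ is a quasitrivial torus, $P' \to T$ is a $C$-torsor, and applying Lemma~\ref{lem:BG-r-equiv} to the pair $(P' \to T,\ S \to Q \to T)$ yields $\HG^1(K,C)/R \cong \HG^1(K,S)$.

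\begin{proof}[Proof of Corollary \ref{cor:R-equiv}]
By Lemma \ref{lem:embedding}, $C$ embeds into the quasitrivial torus $P=\Res_{M_\eps/K}\mG_m$, and by Lemma \ref{lem:quotient} the quotient $T:=P/C'$ by the (prime-to-$2$) subgroup $C'\leq \ker(P\to T)$ of order $(m^{s_0}-1)/2^s$ is a torus over $K$ for which $P/C' \to T$ is a $C$-torsor with $P/C'$ quasitrivial, as a quotient of a quasitrivial torus by a finite group scheme; here we use that this quotient is again (quasi)trivial since it remains a direct factor, after an isogeny, of a restriction of scalars of $\mG_m$. By Lemma \ref{lem:flasque}, the sequence $1\to S\to Q\to T\to 1$ is a flasque resolution of $T$ in the sense of \eqref{equ:flasque-res}. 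Applying Lemma \ref{lem:BG-r-equiv} to the $C$-torsor $P/C'\to T$ and this flasque resolution gives $\HG^1(K,C)/R\cong \HG^1(K,S)$.
\end{proof}
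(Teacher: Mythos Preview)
Your overall architecture is right, and you correctly isolate the one real difficulty: the kernel of $\pi_0:P\to T$ in Lemma~\ref{lem:quotient} is $C\times C'$ with $C'$ cyclic of odd order, not $C$ alone. But your fix contains a genuine gap. You assert that $P/C'$ is again quasitrivial ``since it remains a direct factor, after an isogeny, of a restriction of scalars of $\mG_m$.'' This is not a valid argument: a finite-index sublattice of a permutation lattice (which is what $X(P/C')\subseteq X(P)$ is) need not be a permutation lattice, and being merely isogenous to a quasitrivial torus does not make a torus quasitrivial. Without knowing that $P/C'$ is quasitrivial (or at least rational with trivial $\HG^1$), you cannot invoke Lemma~\ref{lem:BG-r-equiv} for the map $P/C'\to T$.

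The paper sidesteps this entirely. It applies Lemma~\ref{lem:BG-T} (equivalently Lemma~\ref{lem:BG-r-equiv}) to the \emph{full} kernel $\ker\pi_0=C\times C'$ and the honestly quasitrivial torus $P$, obtaining $\HG^1(K,C\times C')/R\cong T(K)/R$. Then, using $\HG^1(K,C\times C')\cong \HG^1(K,C)\times\HG^1(K,C')$ and the fact that $\HG^1(K,C')$ is $R$-trivial for odd cyclic $C'$ (Proposition~\ref{prop:cyclic-odd}), one gets $\HG^1(K,C)/R\cong\HG^1(K,C\times C')/R$. Combining with the flasque resolution of Lemma~\ref{lem:flasque} and Lemma~\ref{lem:BG-r-equiv} gives $T(K)/R\cong\HG^1(K,S)$, hence the corollary. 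The moral: rather than trying to kill $C'$ on the torus side, kill it on the cohomology side via Proposition~\ref{prop:cyclic-odd}.
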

\begin{proof}
    By Lemma  \ref{lem:quotient}, the kernel of the projection $\pi_0:P\ra T$ is a direct product of  $C$ with a  group scheme $C'$ over $K$ which is cyclic of odd order as an abstract group. Thus, by Lemma \ref{lem:BG-T},
    the $R$-equivalence classes on $\HG^1(K,\ker\pi_0)$  are in bijection with $T(K)/R$. However, since $\HG^1(K,\ker\pi_0)\cong \HG^1(K,C)\times \HG^1(K,C')$, and $\HG^1(K,C')$ is $R$-trivial by Proposition \ref{prop:cyclic-odd}, it follows that the $R$-equivalence classes on $\HG^1(K,C)$ are in bijection with those on $\HG^1(K,\ker\pi_0)$ and hence on $T(K)$.
    
    By Lemma \ref{lem:flasque}, the exact sequence
    $$ 
    \xymatrix{1 \ar[r] & S \ar[r] & Q \ar[r] & T \ar[r] & 1,}
    $$
    is a flasque resolution of $T$. Thus, $T(K)/R$ is in bijection with $\HG^1(K,S)$ by Lemma \ref{lem:BG-r-equiv}. Thus, by the above, the $R$-equivalence classes on $\HG^1(K,C)$ are also in bijection with $\HG^1(K,S)$.  
\end{proof}

\subsection{Cohomology of flasque tori} We follow the setup of the previous section to describe $\HG^1(K,S)$ explicitly as follows:
\begin{prop}\label{prop:r-equiv-cosets}
There is an injection $\HG^1(K,S)\ra E^\times/K^\times N_\sigma(M_\eps^\times)$ whose image consists of cosets of $e\in E^\times$ such that $\tau(e)/e\in N_\sigma(M_\eps^\times)$. 
\end{prop}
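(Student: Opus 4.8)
The plan is to exploit the flasque resolution $1\to S\to Q\to T\to 1$ of Lemma~\ref{lem:flasque} together with the explicit descriptions of $T(K)$ and of the projection $\pi$ coming from Lemma~\ref{lem:quotient} and Lemma~\ref{lem:flasque}. Since $Q$ is quasitrivial we have $\HG^1(K,Q)=1$, so the long exact sequence \eqref{equ:long-flasque} identifies $\HG^1(K,S)$ with the cokernel of $\pi\colon Q(K)\to T(K)$, where $T(K)=\{(p,q)\in M_\eps^\times\times E^\times:N_\sigma(p)=\tau(q)/q\}$, $Q(K)=M_\eps^\times\times K^\times\times M_\eps^\times$, and $\pi(s,t,r)=\bigl(\tfrac{\tau(s)}{s}\tfrac{r}{\sigma(r)},\,\tfrac{N_\sigma(s)}{t}\bigr)$. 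All of the argument below is then a diagram chase with this explicit formula.

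First I would define the group homomorphism $\Phi\colon T(K)\to E^\times/K^\times N_\sigma(M_\eps^\times)$ sending $(p,q)$ to the class of $q$. The second coordinate $N_\sigma(s)/t$ of $\pi(s,t,r)$ lies in $K^\times N_\sigma(M_\eps^\times)$, so $\Phi$ annihilates $\operatorname{im}\pi$ and descends to the desired map $\bar\Phi\colon \HG^1(K,S)\to E^\times/K^\times N_\sigma(M_\eps^\times)$. To pin down the image, note that for $(p,q)\in T(K)$ one has $\tau(q)/q=N_\sigma(p)\in N_\sigma(M_\eps^\times)$, and that the condition ``$\tau(e)/e\in N_\sigma(M_\eps^\times)$'' depends only on the class of $e$ modulo $K^\times N_\sigma(M_\eps^\times)$, because $\tau$ fixes $K^\times$ and commutes with $N_\sigma$; conversely, given $e\in E^\times$ with $\tau(e)/e=N_\sigma(p_0)$ the pair $(p_0,e)$ lies in $T(K)$ and maps to the class of $e$. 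Hence $\operatorname{im}\bar\Phi=\{[e]:\tau(e)/e\in N_\sigma(M_\eps^\times)\}$.

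The core of the argument is injectivity of $\bar\Phi$. Suppose $(p,q)\in T(K)$ with $q=k\,N_\sigma(s_0)$ for some $k\in K^\times$, $s_0\in M_\eps^\times$. Since $\pi(s_0,1/k,1)=\bigl(\tfrac{\tau(s_0)}{s_0},\,q\bigr)$, replacing $(p,q)$ by $(p,q)\cdot\pi(s_0,1/k,1)^{-1}$ reduces us to showing that every element $(u,1)\in T(K)$ lies in $\operatorname{im}\pi$; such a $u$ necessarily satisfies $N_\sigma(u)=\tau(1)/1=1$. By Hilbert's Theorem~90 for the order-two action of $\langle\sigma\rangle$ on the units of the \'etale algebra $M_\eps$ over $E=M_\eps^{\langle\sigma\rangle}$ (i.e.\ $\HG^1(\langle\sigma\rangle,M_\eps^\times)=0$, using Shapiro's lemma in the degenerate case where $M_\eps$ is not a field), $N_\sigma(u)=1$ gives $u=\sigma(r_0)/r_0$, and then $(u,1)=\pi(1,1,r_0^{-1})$. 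As $\pi$ is a homomorphism, it follows that $(p,q)=\pi(s_0,1/k,r_0^{-1})\in\operatorname{im}\pi$, proving injectivity.

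I expect the main obstacle to be the bookkeeping around the \'etale algebra $M_\eps$: one must check that Hilbert~90 applies when $M_\eps$ degenerates (for instance when $\sqrt{-1}\in E$ or $\sqrt{-a}\in E$, so that $M_\eps\cong E\times E$ with $\sigma$ swapping factors), and verify that the target $E^\times/K^\times N_\sigma(M_\eps^\times)$ is a group and that the condition cutting out the image is well defined on cosets. Once those points are settled, the remainder is the routine computation with the explicit formula for $\pi$ sketched above.
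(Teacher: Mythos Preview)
Your argument is correct and takes a genuinely different, more elementary route than the paper. The paper never uses the cokernel description $\HG^1(K,S)\cong T(K)/\pi(Q(K))$ directly; instead it works on the character-lattice side, introducing a filtration $C\subset\hat B\subset\X(S)$ with subquotients $B,\hat A,A$, passes to the associated tori $T_C,T_{\hat B},T_B,T_{\hat A},T_A$, and runs the long exact sequences of the two short exact sequences of tori. It then identifies $\HG^1(K,T_B)\cong E^\times/N_\sigma(M_\eps^\times)$ via Shapiro's lemma and the norm-torus sequence, and pins down the image of $\HG^1(K,T_{\hat A})\to\HG^1(K,T_B)$ by comparing two exact sequences with common kernel $T_A$ and using that $\Res_{M_\eps/K}\mathbb G_m$ has trivial $\HG^1$.

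Your approach bypasses all of this by reading off both the target group and the injectivity directly from the explicit formula for $\pi$ given before Lemma~\ref{lem:flasque}: the second coordinate $N_\sigma(s)/t$ immediately tells you what $\Phi$ should be, and the reduction to $(u,1)$ with $N_\sigma(u)=1$ followed by a single application of Hilbert~90 for $\langle\sigma\rangle$ acting on $M_\eps^\times$ handles injectivity in one line. This is shorter and more transparent for this particular $S$. The paper's filtration argument is more structural and would adapt more readily if one changed the shape of the flasque torus, but for the statement at hand your proof is preferable. Your remarks about the degenerate \'etale cases (where $M_\eps\cong E\times E$) are adequate: Hilbert~90 then reduces to Shapiro's lemma, and in any event Remark~\ref{rem:noncyclic}(1) shows the target group is trivial there, so nothing is at stake.
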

\begin{proof}
We use the notation of Lemma \ref{lem:flasque} and Remark \ref{rem:ker}. Let  $\hat B\leq \X(S)$ be the module spanned by $c$ and $\tau^ib$, $i=0,\ldots,s_0-1$; let $C$ be the module spanned by $c$; let $B$ (resp.\ $\hat A$) be the quotient $\hat B/C$ (resp.\ $\X(S)/C$). Let $A$ be the quotient $\hat A/B$. Thus we have the following commutative diagram of lattices with exact rows: 
 $$
\xymatrix{
 & & A \ar@{=}[r]& A & \\
0 \ar[r] & C \ar[r] & \X(S) \ar[r] \ar@{->>}[u] & \hat A \ar[r] \ar@{->>}[u] & 0 \\ 
0 \ar[r] & C \ar[r] \ar@{=}[u] & \hat B \ar[r] \ar@{^{(}->}[u] & B \ar[r] \ar@{^{(}->}[u] & 0. \\
 }
 $$ 
Letting $T_X$ denote the torus with character lattice $X$, we have:
$$
\xymatrix{
 &  T_A \ar@{=}[r] \ar@{^{(}->}[d] & T_A \ar@{^{(}->}[d] & & \\
1 \ar[r] & T_{\hat A} \ar[r] \ar@{->>}[d]_{p_y} & S \ar[r] \ar@{->>}[d]  & T_C \ar[r]  & 1 \\ 
1 \ar[r] & T_B \ar[r]  & T_{\hat B} \ar[r]  & T_C \ar[r]  \ar@{=}[u] & 1. \\
}
$$ 

Note that $T_A = \Res_{E/K}\mG_m$ so that $\HG^1(K,T_A)=0$, and that $T_C=\mG_m$ so that $\HG^1(K,T_C)=0$ by Hilbert 90. Since $T_C\cong\mathbb G_m$, the long exact sequences corresponding to the exact rows of the above diagram then yield: 
\begin{equation}\label{equ:cohom-diag}
\xymatrix{
\HG^0(K,T_C) = K^\times \ar[r] \ar@{=}[d] & \HG^1(K,T_{\hat A})  \ar[r] \ar@{^{(}->}[d]^{\varphi}  & \HG^1(K,S)  \ar[r] \ar@{^{(}->}[d] & 0 \\ 
\HG^0(K,T_C) = K^\times \ar[r]   & \HG^1(K,T_{B}) \ar[r]  & \HG^1(K,T_{\hat B}) \ar[r]  & 0. \\
}
\end{equation}
Thus $\HG^1(K,S)$ embeds into $\HG^1(K,T_{\hat B})$ with image isomorphic to the quotient of the $\varphi$-image of $\HG^1(K,T_{\hat A})$ by $K^\times$. We claim that $\HG^1(K,T_{\hat B})\cong E^\times/K^\times N_\sigma(M_\eps)^\times$: 

Note that $B$ is spanned by the images of $\tau^ib$, $i=0,\ldots,s_0-1$, and $\sigma$ acts on these images by $\sigma \tau^ib=-\tau^ib$  in view of Remark \ref{rem:ker}. Thus,  $T_B\cong \Res_{E/K}(\Res^1_{M_\eps/E}\mG_m)$ as in \S\ref{sec:setup}. 
Shapiro's lemma yields an isomorphism 
$\HG^1(K,T_B)\cong 
\HG^1(E,\Res^1_{M_\eps/E}\mG_m).$
Since $\HG^1(E,\Res_{M_\eps/E}\mG_m)=0$,  
the short exact sequence 
 $$1\ra \Res^1_{M_\eps/E}\mG_m\ra \Res_{M_\eps/E}\mG_m\stackrel{N_\sigma}{\ra} 
 \mG_m \ra 1,$$
then gives the isomorphism  $\HG^1(E,\Res^1_{M_\eps/E}\mG_m)\cong E^\times /N_\sigma(M_\eps^\times)$. 
In total,  $\HG^1(K,T_B)\cong E^\times/N_\sigma(M_\eps)^\times$ and hence $\HG^1(K,T_{\hat B})\cong E^\times/K^\times N_\sigma(M_\eps)^\times$ by  \eqref{equ:cohom-diag}, yielding the claim. 

It remains to find the image of the projection $(p_y)_*:\HG^1(K, T_{\hat A})\ra \HG^1(K, T_{B})$ induced from the projection $p_y:T_{\hat A}\ra T_B$. 
By Remark \ref{rem:ker}, $\hat A$ is spanned by $\tau^ia,\tau^ib$, $i=0,\ldots,s_0-1$ subject to the relations $\sigma a -a=\tau b -b$ and $\sigma b+b=0$, so that:
$$
T_{\hat A}(K')=\{(x,y)\in (M_\eps\otimes_K K')\times (M_\eps\otimes_K K')\,|\,\frac{\sigma(x)}{x}=\frac{\tau(y)}{y},N_\sigma(y)=1\},
$$
for every $K'\supseteq K$. Note that $p_y(x,y)=y$. 
Consider also the projection $p_x:T_{\hat A}\ra R_{M_\eps/K}\mG_m$,  $(x,y)\mapsto x$ to the $x$-coordinate. 
Since $\sigma x/x = \tau y/y$ in $T_{\hat A}$, we see that the composition of $p_x$ with the map  $(\sigma-1):R_{M_\eps/K}\mG_m\ra T_B$, $x\mapsto \sigma(x)/x$ coincides with the composition of $p_y:T_{\hat A}\ra T_B$, $(x,y)\mapsto y$ with the maps $(\tau-1):T_B\ra T_B$, $y\mapsto \tau y/y$. In addition the kernel of $\sigma-1$ consists of $x\in (M_\eps\otimes_K K')^\times$ such that $\sigma x=x$, that is, of $x\in (E\otimes_K K')^\times$. Thus $\ker(\sigma-1)=R_{E/K}\mG_m=T_A$. In total, the following diagram is commutative with exact rows:
\begin{equation*}
    \xymatrix{
    1\ar[r] & T_A \ar[r] \ar@{=}[d] & T_{\hat A} \ar[r]^{p_y} \ar[d]_{p_x}& T_B \ar[r] \ar[d]^{\tau-1} & 1 \\
    1\ar[r] & T_A \ar[r] & R_{M_\eps/K}\mG_m \ar[r]^>>>>>{\sigma-1} & T_B \ar[r] & 1.
    }
\end{equation*}
Since $R_{M_\eps/K}\mG_m$ is quasitrivial, the above short exact sequences yield the following maps between the corresponding long exact sequences:
\begin{equation}
    \xymatrix{
    \HG^1(K,T_{\hat A}) \ar[r]^{(p_x)_*} & \HG^1(K,T_B) \ar[r] \ar[d]_{(\tau-1)_*} & \HG^2(K, T_A) \ar@{=}[d] &  \\
    1 \ar[r] & \HG^1(K,T_B) \ar[r] & \HG^2(K,T_A) \ar[r] & 1. 
    }
\end{equation}
Since the map $\HG^1(K,T_B)\ra\HG^2(K,T_A)$ on the bottom row is an isomorphism, it follows that  the image of $(p_x)_*:\HG^1(K,T_{\hat A})\ra \HG^1(K,T_B)$ coincides with the kernel of  $(\tau-1)_*$. Identifying $\HG^1(K,T_B)$ with $E^\times/N_\sigma(M_\eps^\times)$ as above, $(\tau-1)_*$  yields the map 
$E^\times/N_\sigma(M_\eps^\times)\ra E^\times/N_\sigma(M_\eps^\times)$, $y\mapsto \tau y/y$. Its kernel consists of those $\alpha\in E^\times/N_\sigma(M_\eps)^\times$ such that $\tau(\alpha)/\alpha\in N_\sigma(M_\eps)^\times$, 
and the claim follows from the consequence of \eqref{equ:cohom-diag} (appearing right after it). 
\end{proof}%
The subgroup $S_K=S_K(\eps,a)$ of $E^\times/K^\times N_\sigma(M_\eps^\times)$ consisting of cosets of $e\in E^\times$ such that $\tau(e)/e\in N_\sigma(M_\eps^\times)$ is also known as the group of special projective conorms, a terminology due to Platonov. 
The following proposition describes $S_K$ in terms of the Brauer group. It extends \cite[Thm.\ 3.2]{Schneps} of Martinais--Schneps (who consider the case $M_\eps=K(\mu_{2^s})$).
\begin{prop}\label{prop:projective} 
For $a\in K$ and $\eps\in \{\pm 1\}$, let $N_\eps=K[\sqrt{-1}]$ if $\eps=1$ and $N_\eps=K[\sqrt{-a}]$ if $\eps=-1$. Then:
$$S_K\cong \coker(\Br(E/K)+\Br(N_\eps/K)\ra \Br(M_\eps/K)).$$ 
\end{prop}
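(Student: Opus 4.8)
The plan is to move both sides of the asserted identity into the relative Brauer group $\Br(M_\eps/E)$ and compare them there. I will work under the assumption that $E$, $K[\sqrt{a}]$ and $N_\eps$ are linearly disjoint fields over $K$, so that $L$ and $M_\eps$ are fields, $\Gal(M_\eps/E)=\langle\sigma\rangle$ has order $2$, $\Gal(E/K)=\langle\tau\rangle$ is cyclic of order $s_0$, and $M_\eps=E\cdot N_\eps$ with $\Gal(M_\eps/E)$ identified with $\Gal(N_\eps/K)$ by restriction; in the remaining (degenerate) cases $L,M_\eps$ split as products of fields and one checks directly that both sides of the claim are given by the same recipe.

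First I unpack $S_K$. Since $M_\eps/E$ is cyclic of degree $2$, the cyclic algebra construction gives a canonical isomorphism $\Br(M_\eps/E)\cong E^\times/N_\sigma(M_\eps^\times)$. As $\sigma$ and $\tau$ commute one has $\tau N_\sigma(x)=N_\sigma(\tau x)$, so the natural $\Gal(E/K)$-action on $\Br(M_\eps/E)$ becomes the $\tau$-action on $E^\times/N_\sigma(M_\eps^\times)$; hence $\Br(M_\eps/E)^{\Gal(E/K)}$ consists precisely of the classes of those $e\in E^\times$ with $\tau(e)/e\in N_\sigma(M_\eps^\times)$. Moreover $\res_{E/K}$ takes the cyclic algebra of $k\in K^\times$ over $K$ (split by $N_\eps$) to the cyclic algebra of $k$ over $E$ (split by $M_\eps=E\cdot N_\eps$), i.e.\ to the class of $k$ in $E^\times/N_\sigma(M_\eps^\times)$, so $\res_{E/K}\big(\Br(N_\eps/K)\big)$ equals the image of $K^\times$ there. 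Since $S_K$ is by definition the quotient of $\{e\in E^\times:\tau(e)/e\in N_\sigma(M_\eps^\times)\}$ by $K^\times N_\sigma(M_\eps^\times)$, this gives a canonical identification
\[S_K\ \cong\ \Br(M_\eps/E)^{\Gal(E/K)}\big/\,\res_{E/K}\big(\Br(N_\eps/K)\big).\]

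The heart of the matter is the exact sequence
\[0\longrightarrow \Br(E/K)\longrightarrow \Br(M_\eps/K)\xrightarrow{\ \res_{E/K}\ }\Br(M_\eps/E)^{\Gal(E/K)}\longrightarrow 0.\]
Here $\Br(E/K)$ and $\Br(N_\eps/K)$ are subgroups of $\Br(M_\eps/K)$ because $E,N_\eps\subseteq M_\eps$, the first arrow is this inclusion, and exactness at the first two terms is immediate from $\ker\big(\res_{E/K}\colon\Br(K)\to\Br(E)\big)=\Br(E/K)$. For surjectivity onto the invariant part I would run the Hochschild--Serre spectral sequence
\[H^p\big(\Gal(E/K),\,H^q(\Gal(M_\eps/E),M_\eps^\times)\big)\ \Longrightarrow\ H^{p+q}(\Gal(M_\eps/K),M_\eps^\times).\]
By Hilbert~90 the row $q=1$ vanishes, and --- this is where the shape of $E$ enters --- since $\Gal(E/K)=\langle\tau\rangle$ is cyclic, periodicity of the cohomology of cyclic groups together with Hilbert~90 gives $H^3(\Gal(E/K),E^\times)\cong H^1(\Gal(E/K),E^\times)=0$. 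Consequently no differential enters or leaves $E_2^{0,2}=\Br(M_\eps/E)^{\Gal(E/K)}$, the term $E_2^{2,0}=\Br(E/K)$ also survives, and the two-step filtration of $\Br(M_\eps/K)=H^2(\Gal(M_\eps/K),M_\eps^\times)$ produces the displayed sequence, with first map the inflation (the inclusion $\Br(E/K)\hookrightarrow\Br(M_\eps/K)$) and second map the restriction $\res_{E/K}$.

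Combining the two: the exact sequence gives an isomorphism $\res_{E/K}\colon\Br(M_\eps/K)/\Br(E/K)\xrightarrow{\ \sim\ }\Br(M_\eps/E)^{\Gal(E/K)}$ which carries the image of $\Br(N_\eps/K)$ onto $\res_{E/K}\big(\Br(N_\eps/K)\big)$; passing to the further quotient and using the first paragraph yields the asserted isomorphism $\coker(\Br(E/K)+\Br(N_\eps/K)\to\Br(M_\eps/K))\cong S_K$. The step I expect to be the real obstacle is precisely the surjectivity of $\res_{E/K}$ onto $\Br(M_\eps/E)^{\Gal(E/K)}$, equivalently the vanishing $H^3(\Gal(E/K),E^\times)=0$: it rests on $E/K$ being cyclic and would fail for a general abelian $\Gal(E/K)$, where the clean Brauer-group description breaks down. (One can alternatively obtain the exact sequence by an explicit generalized crossed product cocycle computation in the spirit of Martinais--Schneps, but the cohomological route is shorter.)
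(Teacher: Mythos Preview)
Your proof is correct and follows essentially the same route as the paper: both arguments identify $\Br(M_\eps/E)\cong E^\times/N_\sigma(M_\eps^\times)$ and use the Hochschild--Serre spectral sequence for $\Gal(M_\eps/K)$, relying on the cyclicity of $\Gal(E/K)$ (so that $H^3(\Gal(E/K),E^\times)\cong H^1(\Gal(E/K),E^\times)=0$) to obtain surjectivity of $\res_{E/K}$ onto $\Br(M_\eps/E)^{\tau}$, and then match the subgroup $K^\times N_\sigma(M_\eps^\times)/N_\sigma(M_\eps^\times)$ with the image of $\Br(N_\eps/K)$ and the kernel of restriction with $\Br(E/K)$. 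The paper separates the spectral sequence step into a standalone lemma and identifies the kernel by an explicit cyclic-algebra manipulation, whereas you package everything as a single short exact sequence; the content is the same.
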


Our proof  is somewhat different from \cite[Thm.\ 3.2]{Schneps} and rests on:
\begin{lemma}\label{lem:spectral}
Suppose $M/K$ is a Galois (\'etale algebra) extension of $K$ with $\Gal(M/K) = G$ and $H \triangleleft G$ is a  normal subgroup with cyclic quotient $C:=G/H$. Let $E = M^H$.
$\Br(M/K) \to \Br(M/E)^C$
is surjective.
\end{lemma}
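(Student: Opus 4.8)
The plan is to use the Hochschild--Serre spectral sequence for the group extension $1 \to H \to G \to C \to 1$ applied to the module $M^\times$ (with $\HG^i(G, M^\times)$ computing a piece of $\Br(M/K)$ and $\HG^i(H, M^\times)$ a piece of $\Br(M/E)$). The relevant low-degree exact sequence is
\[
\HG^2(C, M^{\times H}) \to \ker\!\left(\HG^2(G, M^\times) \to \HG^2(H, M^\times)\right) \to \HG^1(C, \HG^1(H, M^\times)).
\]
Since $M^{\times H} = E^\times$ and, by Hilbert 90, $\HG^1(H, M^\times) = 0$, the last term vanishes, so the inflation map $\HG^2(C, E^\times) \to \HG^2(G, M^\times)$ surjects onto the kernel of the restriction $\HG^2(G, M^\times) \to \HG^2(H, M^\times)$. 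Unwinding the identifications $\HG^2(G, M^\times) \cong \Br(M/K)$ and $\HG^2(H, M^\times) \cong \Br(M/E)$, this says precisely that the restriction map $\Br(M/K) \to \Br(M/E)$ has image contained in —hence, once we check surjectivity onto— the $C$-invariants $\Br(M/E)^C$.

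First I would make the cohomological dictionary explicit: for a Galois extension $M/K$ with group $G$ one has $\Br(M/K) \cong \HG^2(G, M^\times)$ functorially in the pair $(G, M)$, and the restriction map on Brauer groups $\Br(M/K) \to \Br(M/E)$ corresponds to the restriction map $\HG^2(G, M^\times) \to \HG^2(H, M^\times)$ along $H \leq G$. The group $C = G/H$ acts on $\Br(M/E)$ via its action on $E$ (equivalently, via the conjugation action of $G$ on $H$ and on $M^\times$ descending to $C$ on $H$-cohomology), and this action is the standard one in the spectral sequence. Next I would invoke the five-term exact sequence of the Hochschild--Serre spectral sequence
\[
0 \to \HG^1(C, E^\times) \to \HG^1(G, M^\times) \to \HG^1(H, M^\times)^C \to \HG^2(C, E^\times) \to \HG^2(G, M^\times)_1 \to \HG^1(C, \HG^1(H, M^\times)),
\]
where $\HG^2(G, M^\times)_1$ denotes the kernel of the restriction to $H$. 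Then I would plug in $\HG^1(H, M^\times) = 0$, which collapses the sequence and shows $\HG^2(C, E^\times) \twoheadrightarrow \HG^2(G, M^\times)_1$.

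To finish, I would observe that $\HG^2(C, E^\times)$ with $C$ cyclic is exactly $\Br(E/K)$ (or more precisely maps onto the relative Brauer group $\Br(E/K)$, which is what one needs), and that the composite $\HG^2(C, E^\times) = \Br(E/K) \hookrightarrow \Br(M/K) \to \Br(M/E)$ is zero (inflation followed by restriction along the subextension); hence the image of $\HG^2(C,E^\times)$ in $\HG^2(G,M^\times)$ lands in the kernel of restriction to $H$, consistent with the spectral sequence. The content is that this kernel $\Br(M/K) \to \Br(M/E)$ is, on the nose, $\mathrm{image}(\mathrm{inf})$, and the target of restriction, $\Br(M/E)^C$, receives $\Br(M/K)$ surjectively; here one uses that $C$ is cyclic so that the obstruction term $\HG^1(C, \HG^1(H, M^\times))$ (which would a priori measure the failure of surjectivity onto the $C$-invariants via the edge map $\HG^2(G, M^\times)_1 \to \HG^2(H,M^\times)^C = \Br(M/E)^C$) vanishes.

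The main obstacle I anticipate is bookkeeping rather than conceptual: one must be careful that the "relative" Brauer groups $\Br(M/K)$, $\Br(M/E)$ are genuinely $\HG^2$ of the respective Galois groups with coefficients in $M^\times$ (true since $M$ splits them), that the $C$-action on $\Br(M/E)$ used in the statement coincides with the spectral-sequence action, and that the edge map $\HG^2(G,M^\times) \to \HG^2(H,M^\times)^{C}$ is surjective — this last point is exactly where the cyclicity of $C$ enters, via vanishing of $\HG^1(C, \HG^1(H,M^\times))$ in the spectral sequence; in fact with $\HG^1(H, M^\times)=0$ even that hypothesis is not needed for surjectivity onto the invariants, but I would keep the cyclicity in the statement since it is what is available and makes the identification $\HG^2(C,E^\times) \leftrightarrow \Br(E/K)$ transparent.
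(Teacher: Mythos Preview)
Your overall approach---the Hochschild--Serre spectral sequence for $1 \to H \to G \to C \to 1$ with coefficients in $M^\times$---is the same as the paper's, but you have misidentified the obstruction to surjectivity, and this is a genuine gap.

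You repeatedly point to $E_2^{1,1} = \HG^1(C, \HG^1(H, M^\times))$ as the term whose vanishing gives surjectivity of $\Br(M/K) \to \Br(M/E)^C$. That is the wrong term. The vanishing of $E_2^{1,1}$ (together with the five-term sequence you wrote) tells you that $\ker(\res) = \mathrm{im}(\inflation)$, i.e.\ it identifies the \emph{kernel} of restriction; it says nothing about the \emph{image}. The edge map $\HG^2(G,M^\times) \to E_2^{0,2} = \Br(M/E)^C$ factors as a surjection onto $E_\infty^{0,2}$ followed by the inclusion $E_\infty^{0,2} \hookrightarrow E_2^{0,2}$, so surjectivity onto $\Br(M/E)^C$ is equivalent to $E_\infty^{0,2} = E_2^{0,2}$. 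Two differentials must vanish for this: $d_2: E_2^{0,2} \to E_2^{2,1}$ and $d_3: E_3^{0,2} \to E_3^{3,0}$. The first vanishes because $E_2^{2,1} = \HG^2(C, \HG^1(H, M^\times)) = 0$ by Hilbert~90 for $H$. The second is the real point: $E_3^{3,0} = \HG^3(C, E^\times)$, and this vanishes because $C$ is cyclic (so $\HG^3 \cong \HG^1$ by periodicity) and then Hilbert~90 for $C$ gives $\HG^1(C, E^\times) = 0$. This is exactly the argument in the paper.

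Your final remark, that ``with $\HG^1(H, M^\times)=0$ even that hypothesis [cyclicity of $C$] is not needed for surjectivity onto the invariants,'' is therefore false: Hilbert~90 for $H$ kills $E_2^{2,1}$ but does nothing to $\HG^3(C, E^\times)$, and for non-cyclic $C$ the transgression $d_3$ can genuinely be nonzero (this is the Teichm\"uller obstruction to extending $C$-invariant Brauer classes). Also, your phrase ``the edge map $\HG^2(G, M^\times)_1 \to \HG^2(H,M^\times)^C$,'' where $\HG^2(G, M^\times)_1$ denotes the kernel of restriction, does not make sense: the kernel of restriction maps to zero in $\HG^2(H,M^\times)$ by definition.
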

\begin{proof}
We consider the Lyndon-Hochschild-Serre spectral sequence 
\[E^{p,q}_2 = \HG^p(C, \HG^q(H, M^*)) \Longrightarrow \HG^{p + q}(G, M^*) = E^{p + q}\]
which gives an exact sequence associated to terms on the $E_3$ page:
\[\HG^2(G, M^*) = E^2 \to E^{0, 2}_3 \to E^{3, 0}_3 \to E^3.\]
By definition, we may identify
\[  E^{0,2}_3 = \ker\left[ E^{0,2}_2 \to E^{2, 1}_2 \right] = \ker\left[ \HG^0(C, \HG^2(H, M^*)) \to \HG^2(C, \HG^1(H, M^*)) \right], \]
and as $\HG^1(H, M^*) = 0$ by Hilbert 90,  $E^{0,2}_3$ can be identified with $\Br(M/E)^C$. We also find that $E^{3, 0}_3 = \HG^3(C, E^*) = \HG^1(C, E^*) = 0$, using the fact that $C$ is cyclic, and again using Hilbert 90. Putting these together, we obtain the desired exact sequence
\[\Br(M/K) \to \Br(M/E)^C \to 0. \]
\end{proof}
\begin{proof}[Proof of Proposition \ref{prop:projective}]
First note that  $L/E$ is a biquadratic extension which is defined over $K$, that is, $K[\sqrt{a},\sqrt{-1}]\otimes_K E\cong L$ for the biquadratic extension $K[\sqrt{a},\sqrt{-1}]/K$. The extension $M_\eps/E$ is also defined over $K$ by the quadratic extension $N_\eps/K$ defined above, that is, $N_\eps\otimes_K E\cong M_\eps$.

Now consider the restriction map $\Br(M_\eps/K)\ra \Br(M_\eps/E).$ 
Since $E/K$ is cyclic, Lemma \ref{lem:spectral} implies that its image is the  subgroup $\Br(M_\eps/E)^\tau$ of $\tau$-invariants in $\Br(M_\eps/E)$. As $M_\eps/E$ is cyclic,  $\Br(M_\eps/E)\cong E^\times/N_{M_\eps/E}(M_\eps)^\times$, and the image $H$ of $\Br(M_\eps/E)^\tau$ under this isomorphism consists of the cosets of $z\in E^\times$ for which the cyclic algebras $(M_\eps/E,\sigma,z)$ and $(M_\eps/E,\sigma,z^\tau)$ are isomorphic, or equivalently such that $z^\tau/z\in N_{M_\eps/E}(M_\eps^\times)$. 

Writing $N(M_\eps)$ for $N_{M_\eps/K}(M_\eps)^\times$, we note that  $S_K\cong H/(K^\times N(M_\eps)/N(M_\eps))$. Thus, the kernel of the composed map $\Br(M_\eps/K)\ra H\ra S_K$  consists of those classes whose restriction to $E$ is equivalent to a cyclic algebra $(M_\eps/E,\sigma,z')$ for some $z'\in K^\times$. This is clearly the case for every decomposed algebra $(E/K,\tau,y)\otimes_K (N_\eps/K,\sigma',z')$, where  $y,z'\in K^\times$ and $\sigma'$ is the nontrivial automorphism in $\Gal(N_\eps/K)$. Conversely, for every class $\alpha\in \Br(M_\eps/K)$ whose restriction to $E$ takes the form $(M_\eps/E,\sigma,z')$, $z'\in K^\times$, the restriction of the class $\alpha-[(N_\eps/K,\sigma',z')]\in \Br(M_\eps/K)$ to $E$ is trivial. As $E/K$ is cyclic, this class is equivalent to $[(E/K,\tau,y)]$ for some $y\in K^\times$, so that $\alpha$ is the class of a decomposed algebra $(E/K,\tau,y)\otimes_K (N_\eps/K,\sigma',z')$, where  $y,z'\in K^\times$. It follows that $S_K$ is isomorphic to the quotient of $\Br(M_\eps/K)$ by the image of $\Br(E/K)+\Br(N_\eps/K)$ as desired. 
\end{proof}
\begin{proof}[Proof of   Theorems \ref{thm:2-power-cyclic}.\ and \ref{thm:2-power}] The first assertion of Theorem \ref{thm:2-power-cyclic} (resp.\ \ref{thm:2-power}) is the combination of  Propositions \ref{prop:r-equiv-cosets} and \ref{prop:projective} for $\eps=1$ (resp.\ $\eps=-1$). There  cannot be a  parametrizing set of cardinality $\leq r-1$ since there are at least $r$ $R$-equivalence classes by the first assertion. The existence of a densely parametrizing set with $r$ torsors over a rational space then follows from Corollary \ref{cor:param}.
\end{proof}
\begin{remark}\label{rem:noncyclic}
1) Note that if $M_\eps/K$ is not a field extension, then either $E=K$, or $K[\sqrt{\eps a}]/K$ is a trivial extension, or $K[\sqrt{\eps a}]\subseteq E$. In all these cases, the resulting group $S_K$ is clearly trivial. \\
2) We note that the treatment in this section applies similarly to the action $\kappa\cdot c=c^\eps$, where $-1\neq \eps\in (\Z/2^s)^\times$ is an element of order $2$ generating a direct summand of $(\Z/2^s)^\times$. To do so, one needs to replace $\sigma$ so that $\sigma(z)=z^\eps$, and replace $E=K(\eta_s)$ by the new fixed field  $E=K(\sqrt{-1}\cdot \eta_s)$ of $\sigma$. A study of the other possible actions is of further interest. \\
3) In case $K$ is a number field, the subgroup $\Br(E/K)+\Br(N_\eps/K)$ coincides with the decomposable subgroup $\Dec(M_\eps/K)$, so that the quotient represents the indecomposable part of that relative Brauer group. 
\end{remark}

\section{Dihedral extensions} \label{sec:dihedal}

In this section, we consider  $R$-equivalence  on dihedral extensions over a number field $K$, proving 
Theorem \ref{thm:main}. 

The proof uses the following description of the group $S_K$ from Proposition \ref{prop:projective}. For a Galois field extension $M/K$ and a place 
$\nu$ of $K$, let $M_\nu$ denote the completion\footnote{Note that the completion $M_{\fP}$ is independent of the choice of prime $\fP$ over $\fp$.} of $M$ at a place $\omega$ of $M$ lying over $\nu$. Let $\Inv_\nu:\Br(K_\nu)\ra\mQ/\Z$ denote the Hasse-invariant map. 
\begin{lemma}\label{lem:local}
    Let $M/K$ be a Galois field extension of group $C_2\times C_{s_0}$ for a nontrivial $2$-power $s_0$, and set $E=M^{C_2}$ and $N:=M^{C_{s_0}}$. 
    Let $\mathcal S$ be the set of primes $\fp$ of $K$ for which
    $M_\fp/K_\fp$ is noncyclic, 
    and $\mathcal S_{f}\subseteq \mathcal S$ its subset of primes $\fp$ with full local degree $[M_\fp:K_\fp]=[M:K]$. 
    Then the kernel of the map 
    $$I:\Br(M/K)\ra (\frac{1}{2}\Z/\Z)^{\mathcal S}, \alpha\mapsto \left(\frac{[M_\fp:K_\fp]}{2}\cdot \Inv_\fp(\alpha)\right)_{\fp\in \mathcal S},$$ is $\Br(E/K)+\Br(N/K)$ and its image consists of the elements $(x_\fp)_{\fp}\in (\frac{1}{2}\Z/\Z)^{\mathcal S}$ with $\sum_{\fp\in \mathcal S_f}x_\fp=0$. 
    In particular,  the quotient of $\Br(M/K)$ by $\Br(E/K)+\Br(N/K)$ is isomorphic to this image. 
\end{lemma}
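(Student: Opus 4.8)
The plan is to translate everything into the fundamental exact sequence of global class field theory, reducing the lemma to a local--global principle for decomposability of Brauer classes together with one reciprocity relation. Write $G=\Gal(M/K)=\langle\sigma\rangle\times\langle\tau\rangle$ with $\langle\sigma\rangle=\Gal(M/E)\cong C_2$ and $\langle\tau\rangle=\Gal(M/N)\cong C_{s_0}$; for a place $\nu$ of the number field $K$ let $D_\nu\le G$ be the decomposition group, so $d_\nu=|D_\nu|=[M_\nu:K_\nu]$, and set $e_\nu=[E_\nu:K_\nu]$, $f_\nu=[N_\nu:K_\nu]$. Note that $\mathcal S$ (hence $\mathcal S_f$) is finite, since a noncyclic $D_\nu$ forces $\nu$ to ramify in $M/K$. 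First I would use the Hasse--Brauer--Noether sequence $0\to\Br(K)\to\bigoplus_\nu\Br(K_\nu)\xrightarrow{\sum\Inv_\nu}\mathbb{Q}/\mathbb{Z}\to0$ together with $\Br(M_\nu/K_\nu)=\tfrac1{d_\nu}\Z/\Z$ to identify, via $\alpha\mapsto(\Inv_\nu\alpha)_\nu$, the group $\Br(M/K)$ with $\Sigma:=\{(a_\nu)\in\bigoplus_\nu\tfrac1{d_\nu}\Z/\Z:\sum_\nu a_\nu=0\}$, and compatibly $\Br(E/K)$, $\Br(N/K)$ with the analogous subgroups $\Sigma_E$, $\Sigma_N$ defined using $e_\nu$, $f_\nu$; these lie in $\Sigma$ since $e_\nu,f_\nu\mid d_\nu$. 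Under this dictionary $I$ becomes $(a_\nu)\mapsto(\tfrac{d_\nu}{2}a_\nu)_{\nu\in\mathcal S}$, which is well defined as $\tfrac{d_\nu}{2}\cdot\tfrac1{d_\nu}\Z/\Z=\tfrac12\Z/\Z$ once $d_\nu$ is even.

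Next I would carry out the local group theory. Since $s_0$ is a $2$-power, a subgroup of $G\cong C_2\times C_{s_0}$ is noncyclic precisely when it contains the $2$-torsion subgroup $G[2]\cong C_2\times C_2$; so $\nu\in\mathcal S$ iff $\sigma\in D_\nu$ and the order-$2$ element of $\langle\tau\rangle$ lies in $D_\nu$. For such $\nu$: $\sigma\in D_\nu$ gives $e_\nu=d_\nu/2$, $\sigma\notin\langle\tau\rangle$ gives $f_\nu=2$, and $G[2]\le D_\nu$ gives $4\mid d_\nu$. A short $\gcd$ computation, using $\langle\sigma\rangle\cap\langle\tau\rangle=1$, then yields $\operatorname{lcm}(e_\nu,f_\nu)=d_\nu$ for $\nu\notin\mathcal S$ and $=d_\nu/2$ for $\nu\in\mathcal S$. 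Finally, $D_\nu=G$ forces $\nu\in\mathcal S_f$, so $d_\nu\mid s_0$ whenever $\nu\notin\mathcal S_f$, while $d_\nu=2s_0$ for $\nu\in\mathcal S_f$.

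Now put $\ell_\nu=\operatorname{lcm}(e_\nu,f_\nu)$ and $B'=\{(a_\nu)\in\bigoplus_\nu\tfrac1{\ell_\nu}\Z/\Z:\sum_\nu a_\nu=0\}$. The second step gives $\ker I=B'$ at once, so the first assertion reduces to $B'=\Sigma_E+\Sigma_N$. The inclusion $\supseteq$ is immediate. For $\subseteq$ — the heart of the matter — I would take $(a_\nu)\in B'$, choose local decompositions $a_\nu=b_\nu+c_\nu$ with $b_\nu\in\tfrac1{e_\nu}\Z/\Z$, $c_\nu\in\tfrac1{f_\nu}\Z/\Z$ (possible since $\tfrac1{\ell_\nu}\Z/\Z=\tfrac1{e_\nu}\Z/\Z+\tfrac1{f_\nu}\Z/\Z$), and observe that $\sum b_\nu=-\sum c_\nu$ automatically lies in $\tfrac12\Z/\Z$, so the sole obstruction to simultaneously achieving $\sum b_\nu=0$ and $\sum c_\nu=0$ is a single class there. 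That class can be killed by modifying $b_{\nu_0},c_{\nu_0}$ at one place $\nu_0$ with $f_{\nu_0}=2$ and $e_{\nu_0}$ even; such a $\nu_0$ exists by Chebotarev — take $\nu_0$ unramified with Frobenius $\sigma\tau$, so $D_{\nu_0}=\langle\sigma\tau\rangle$, $f_{\nu_0}=2$, $e_{\nu_0}=s_0$. This passage from everywhere-local to global decomposability is the main obstacle, and the Chebotarev input is exactly the point where being over a number field is used.

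Lastly I would identify $\operatorname{im}I$. Writing $\Inv_\nu\alpha=\tfrac{m_\nu}{d_\nu}$ one has $I(\alpha)_\nu=\tfrac{m_\nu}{2}\bmod1\in\tfrac12\Z/\Z$. Since $\Inv_\nu\alpha\in\tfrac1{s_0}\Z/\Z$ for $\nu\notin\mathcal S_f$ and $\in\tfrac1{2s_0}\Z/\Z$ for $\nu\in\mathcal S_f$, multiplying $\sum_\nu\Inv_\nu\alpha=0$ by $s_0$ gives $\sum_{\nu\in\mathcal S_f}\tfrac{m_\nu}{2}=0$, i.e. $\sum_{\nu\in\mathcal S_f}I(\alpha)_\nu=0$; hence $\operatorname{im}I$ is contained in the claimed subgroup. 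For the reverse, given $(x_\nu)_{\nu\in\mathcal S}=(\tfrac{\epsilon_\nu}{2})$ with $\#\{\nu\in\mathcal S_f:\epsilon_\nu=1\}$ even, set $a_\nu=\tfrac{\epsilon_\nu}{d_\nu}$ for $\nu\in\mathcal S$ and $a_\nu=0$ otherwise; then $\sum_\nu a_\nu\in\tfrac1{s_0}\Z/\Z$ (the $\mathcal S_f$-contribution has denominator $2s_0$ and even numerator), and a correction supported at a single place $\nu_1\notin\mathcal S$ with $D_{\nu_1}=\langle\tau\rangle$, hence $d_{\nu_1}=s_0$ (again from Chebotarev), makes the total $0$ without altering any $I$-value; the resulting tuple lies in $\Sigma$ and maps to $(x_\nu)$. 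Assembling the three steps gives $\ker I=\Br(E/K)+\Br(N/K)$ and $\operatorname{im}I=\{(x_\nu):\sum_{\nu\in\mathcal S_f}x_\nu=0\}$, whence the stated isomorphism via the first isomorphism theorem.
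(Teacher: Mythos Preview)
Your proof is correct and follows essentially the same route as the paper's: both reduce via the Albert--Brauer--Hasse--Noether sequence to tuples of local invariants, carry out the same local analysis of decomposition subgroups of $C_2\times C_{s_0}$, and invoke Chebotarev to produce auxiliary primes (with $D_\nu=\langle\tau\rangle$ or $\langle\sigma\tau\rangle$) for the global corrections needed to fix the sum-of-invariants constraint. Your packaging through $\ell_\nu=\operatorname{lcm}(e_\nu,f_\nu)$ and the identity $B'=\Sigma_E+\Sigma_N$ is somewhat more systematic than the paper's explicit construction of generating classes $\alpha_{\mathfrak r}$, but the content is the same.
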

Recall that by the Albert--Brauer--Hasse--Noether (ABHN) theorem,   the restriction map $\Br(K)\ra \bigoplus_{\nu} \Br(K_\nu)$, where $\nu$ runs over all places of $K$,  maps $\Br(K)$ isomorphically to the subgroup of elements $(\alpha_\nu)_\nu\in \bigoplus_\nu\Br(K_\nu)$ whose sum of invariants $\sum_{\nu}\Inv_\nu(\alpha_\nu)$ is trivial. 
We call the set of places $\nu$ at which $\alpha$ has a nontrivial invariant  the {\it support} of $\alpha$. Note that $\Inv_\nu$ is an isomorphism if $\nu$ is finite, is the trivial map if $\nu$ is complex, and an injection to $\frac{1}{2}\Z/\Z$ if $\nu$ is real. Moreover for a Galois $M/K$, $\Br(M/K)$ consists of those classes $\alpha$ for which $\alpha_\nu$ is split by $M_\nu$ or equivalently $[M_\nu:K_\nu]\cdot\Inv_\nu(\alpha)=0$, see \cite[\S 31-32]{Rei}. Over $p$-adic fields one has:
\begin{remark}\label{rem:p-adic}
In the setup of Lemma \ref{lem:local}, the quotient of $\Br(M_\fp/K_\fp)$ by $\Br(E_\fp/K_\fp)+\Br(N_\fp/K_\fp)$ is nontrivial if and only if $\Gal(M_\fp/K_\fp)$ is noncyclic: For, the latter occurs if and only if $[M_\fp:K_\fp]$ is a strictly larger $2$-power than the degree of any cyclic subextension of $M_\fp/K_\fp$, and hence as in ABHN, if and only if classes $\alpha_\fp$ with invariants of order $[M_\fp:K_\fp]$ 
do not split under cyclic subextensions of $M_\fp/K_\fp$. 
\end{remark}
\begin{proof}[Proof of Lemma \ref{lem:local}] Set $s_0:=[E:K]$ and $D=D(M/K):=\Br(E/K)+\Br(N/K)$ the subgroup split by cyclic subextensions. 
Since  for each $\nu\notin \mathcal S_f$ the invariant of an element in $\Br(M/K)$ at $\nu$ must divide $s_0=[M:K]/2$, one has $\sum_{\fp\in \mathcal S_f}\Inv_\fp(\alpha)\in \frac{1}{s_0}\Z/\Z$.  Thus, the image of $I$ is contained in the  subgroup $I_f\leq (\frac{1}{2}\Z/\Z)^{\mathcal S}$ of elements $(x_\fp)_{\fp\in\mathcal S}$ such that $\sum_{\fp\in\mathcal S_f}x_\fp=0$.  We claim that  the image of $I$ equals $I_f$. 
 
 By Chebotarev's density theorem there exists a prime $\fq$ of $K$, not in $\mathcal S$, that is unramified in $M/K$, and for which $[M_\fq:K_\fq]=[E_{\fq}:K_{\fq}]=[E:K]=s_0$.  Consider  
  classes $\alpha_{\mathfrak{r}}\in\Br(K)$, $\fr\notin\mathcal S_f$ with:
$$
\Inv_{\fp}(\alpha_{\mathfrak r}) = \left\{\begin{array}{cl} 1/[M_\fr:K_\fr] & \text{if }\fp=\mathfrak{r} \\ -1/[M_\fr:K_\fr] & \text{if }\fp=\fq \\
0 & \text{otherwise,}  \end{array} \right.
$$ 
that exist by ABHN. 
Since 
$[M_{\mathfrak{r}}:K_{\mathfrak{r}}]\divides s_0=[M_\fq:K_\fq]$ for such $\mathfrak{r}$, we have $\alpha_\fr\in \Br(M/K)$. Moreover for $\fr\in\mathcal S\setminus\mathcal S_f$, we have $\alpha_\fr\notin D$ since $\alpha_r$ is not split by any cyclic subextension. 
Similarly, if $\mathcal S_f\neq\emptyset$,  pick $\fq'\in \mathcal S_f$ and  define for  $\fr\in \mathcal S_f\setminus\{\fq'\}$:
$$
\Inv_{\fp}(\alpha_{\mathfrak r}) = \left\{\begin{array}{cl} 1/[M:K] & \text{if }\fp=\mathfrak{r} \\ -1/[M:K] & \text{if }\fp=\fq' \\
0 & \text{otherwise.}  \end{array} \right.
$$
Since $\fq',\fr\in \mathcal S_f$, we have $\alpha_\fr\in \Br(M/K)$ and $\alpha_\fr\not\in D$.  Moreover, since $\alpha_\fr$, $\fr\in\mathcal S$ are supported at distinct primes, their images under $I$ generate a subgroup of $I_f$ of rank $\#\mathcal S-1$ if $\mathcal S_f\neq \emptyset$, resp.\ $\#\mathcal S$ if $\mathcal S_f=\emptyset$. Hence the image is $I_f$.


It remains to show $\ker I=D$. 
The containment $D\subseteq \ker I$ holds since   $([M_\fp:K_\fp]/2)\cdot \Inv_\fp(\alpha)=0$ for  $\alpha\in D$ and $\fp\in \mathcal S$. 
For the reverse inclusion, note that since $M_\fr/K_\fr$ has Galois group $C_2\times C$ for cyclic $C$, using an argument similar to the above (and using the additional prime  $\fq$) ABHN implies that every $\alpha\in \ker I$ is the sum of an element in $D$ with an element in $\Br(M/K)$ whose invariants at $\mathcal S$ are trivial. However,  the subgroup of elements with trivial invariants at $\mathcal S$ is generated by the elements $\alpha_\fr$, $\fr\notin\mathcal S$ and these are also contained in $D$, so that $\ker I=D$. 
\end{proof}
The proof of the theorem  uses  the following fact: For a function field $F$ of a genus $0$ curve over $K$, the kernel of local-to-global map $\Br(F)\ra\bigoplus_\nu \HG^1(F_\nu,\mQ/\Z)$ consists of the images of the constant Brauer classes $\Br(K)$, where $\nu$ runs over places of $F$ that are trivial on $K$. This fact follows from the exact sequence \cite[Prop.\ 5.4.2]{CTSK} as follows. Choosing a smooth projective model $X$ of $F$, the kernel of the map is the unramified Brauer group $\Br(X)$, and by Tsen's theorem, it coincides with its subgroup $\Br_1(X)$ consisting of elements split by $\oline K$. Let $X^s$ denote the base change of $X$ to $\oline K$. The exact sequence implies that the kernel of the map $\Br_1(X)\ra\HG^1(K,\Pic(X^s))$ coincides with the image of $\Br(K)\ra \Br_1(X)$ and hence consists of the constants classes. However, since $X$ is a smooth projective curve, $\Pic(X^s)\cong \Z$ and $\HG^1(K,\Pic(X^s))$ is trivial, so that that kernel is all of $\Br_1(X)$. 

\begin{proof}[Proof of Theorem \ref{thm:main}]
Set $G=D_{2^s}=\langle r,s\,|\,r^{2^s}=s^2=srs=1\rangle$ and let $\pi:G\ra C_2$ be the projection mod $\langle r\rangle$. 
We claim that every  $\beta\in \HG^1(K,G)$   is $R$-equivalent to the trivial element.
Let $\gamma$ denote the image of $\beta$ under the map $\pi_*:\HG^1(K,G)\ra \HG^1(K,C_2)$ induced from $\pi$, and $K[\sqrt{a}]/K$ the quadratic \'etale algebra corresponding to $\gamma$. As in \S \ref{sec:twists}, the fiber of $\pi_*$ over $\gamma$ is $\HG^1(K,C_{2^s}^{(a)})$, where $C_{2^s}^{(a)}$ is the twist of $C_{2^s}$ by $\gamma$. Let $\beta'\in \HG^1(K,C_{2^s}^{(a)})$ represent the preimage of $\beta$ in this fiber.


As in Lemma \ref{lem:embedding}, $C_{2^s}^{(a)}$ 
embeds into a quasi trivial torus $P^{(a)}$ on which 
$\Gamma_K$ acts through $\Gal(M/K)$, where 
$M=M^{(a)}:=E[\sqrt{-a}]$ and $E=K(\eta_{s})$. The 
quotient torus $T^{(a)}=P^{(a)}/C_{2^s}^{(a)}$ has a flasque resolution $S^{(a)}\ra Q^{(a)}\ra T^{(a)}$ as in Lemma \ref{lem:flasque}. 
The composition of the map $\HG^1(K,C_{2^s}^{(a)})\ra \HG^1(K,S^{(a)})$ from \eqref{equ:r-equiv-maps} with the maps from Propositions \ref{prop:r-equiv-cosets} and \ref{prop:projective} yields a map $\psi^{(a)}$ from $\HG^1(K,C_{2^s}^{(a)})$  
to the quotient $\oline{\Br}(M/K)$ of  $\Br(M/K)$ by $\Br(E/K)+\Br(K[\sqrt{-a}]/K)$. 
Thus, $\alpha':=\psi^{(a)}(\beta')$ corresponds to the $R$-equivalence class of $\beta'$ in $\HG^1(K,C_{2^s}^{(a)})$. Let $\alpha\in \Br(M/K)$ be a representative of $\alpha'$. 

We first prove the assertion when $M/E$ is not a field extension. In such a case,  either $E=K$, or  $K[\sqrt{-a}]/K$ is a split extension, or $K[\sqrt{-a}]\subseteq E$. In all these cases $\oline{\Br}(M/K)$ is trivial as in Remark \ref{rem:noncyclic}, and hence $\beta'$  is $R$-equivalent to the trivial element in $0\in \HG^1(K,C_{2^s}^{(a)})$. Thus, their images $\beta$ and $\gamma'$ in $\HG^1(K,G)$ are also $R$-equivalent. Finally, note that since all elements in $\HG^1(K,C_2)$ are $R$-equivalent, so are their images under the splitting $\HG^1(K,C_2)\ra\HG^1(K,G)$ which associates to an element the trivial element in its fiber. It follows that $\gamma'$ is $R$-equivalent to the trivial element in $\HG^1(K,G)$ and hence so is $\beta$, yielding the claim. 

Henceforth, assume $M/K$ is a noncyclic  field extension. We shall define $p(t)\in K(t)$ so that $p(0)=-1$, $p(1)=a$, and  an element $\alpha(t)\in {\Br}(M^{(p(t))}/K(t))$ split by $K(t,\sqrt{-p(t)})$ so that $\alpha(0)\sim 0$ but $\alpha(1) \sim \alpha$. Then, picking   $\chi \in \HG^1(K(t),C_{2^s}^{(p(t))})$ whose $R$-equivalence class $\psi^{(p(t))}(\chi)$ is $[\alpha(t)]\in \oline{\Br}(M^{(p)}/K)$, 
the specialization at $t=0$ 
corresponds to  the trivial $R$-equivalence class $\alpha(0)\sim 0$, and that at $t=1$ corresponds to the $R$-equivalence class   $\alpha(1)\sim \alpha$. This shows that the $R$-equivalence class corresponding to $\alpha$  is $R$-equivalent to the trivial element, as required. 


To this end, let $\mathcal S$ be the set of primes $\fp$ for which $M_\fp/K_\fp$ is noncyclic, fix a prime $\fq$ with $[M_\fq:K_\fq]=[M:K]$ if there exists one, and let $\alpha_\fr\in \Br(M/K)$, $\fr\in\mathcal S\setminus \{\fq\}$  (resp.\ $\fr\in \mathcal S$) be classes whose images under the map $I$ from Lemma \ref{lem:local} span the image of $I$ if $\fq$ exists (resp.\ does not exist).  Write $E=K(\eta_{s})$, where $\eta_{s}$ is the real part of a primitive $2^s$-th root of unity and consider the quadratic extension $E'=K(\eta_{s_0+1})$ of $E$. 
Since the extension $E_\fp/K_\fp$ is nontrivial for  $\fp\in \mS$, and as $E'/K$ is a cyclic $2$-power extension containing $E/K$,   $E'_\fp/E_\fp$ is a quadratic extension. Similarly, $E'_\fq/E_\fq$ is also quadratic. Therefore, the classes $\alpha_\fr$ are split locally and hence globally by $E'/K$. Thus, the class $\alpha_\fr$ is equivalent to that of a cyclic algebra $(E'/K,\sigma,a_\fr)$ for some generator $\sigma$ of $\Gal(E'/K)$ and $a_\fr\in K$. 
Since the above elements $\alpha_\fr$ generate the image of $I$, our class  $\alpha$ is equivalent in  $\oline{\Br}(M/K)$ to a class $(E'/K, \sigma,b)$ where $b$ is a product of some of the elements $a_\fr$. 

We  define  $p(t)$ 
to be a quadratic polynomial divisible by a linear polynomial $q(t)$ such that $p(0)=-1$, $p(1)=a$, $q(0)=1$ and $q(1)=b$, and let  
 $\alpha(t)$ be  the Brauer class of  $(E'(t)/K(t),\sigma,q(t))$. By definition of $q$, $\alpha(1)$ is equivalent to $\alpha$ while $\alpha(0)$ is the trivial class.  
It remains to show that $\alpha(t)$ is split by $E(t,\sqrt{-p(t)})$. Indeed, it suffices to show that the quaternion algebra $(E'(t)/E(t), q(t))$ is split by the extension $E(t,\sqrt{-p(t)})$. 
Since $q(t)\divides p(t)$, clearly $E(t,\sqrt{-p(t)})$ splits the ramification of $\alpha(t)$ for every place of $E(t)$ which is trivial on $E$, so that $\alpha(t)\otimes_{E(t)} E(t,\sqrt{-p(t)})$ is unramified at such places. 
As $E(t,\sqrt{-p(t)})$ is the function field of a curve of genus $0$, ABF implies that $\alpha(t)$ is a constant class from $\Br(E)$. Since at $t=0$, $\alpha(t)$ is trivial, this constant class is trivial, so that indeed $\alpha(t)$ is split by $E(t,\sqrt{-p(t)})$. 
\end{proof}

    We note that the proof of Theorem \ref{thm:main} may work over other fields $K$ as well, as long as every class in the quotient of $\oline{\Br}(M/K)$ lifts to a class that is split by $E'$. 

\section{Examples}
\subsection{The cyclic group of order 8}\label{sec:exam1}
We start with an example of a constant group $G=C_8$ and a number field for which $\HG^1(K,G)$ is not $R$-trivial. 
\begin{example}\label{exam:}
We claim this is the case for a quadratic number field $K=\mQ(\sqrt{a})$ for  an integer $a\equiv 1$ mod $8$, e.g.\ $a=17$. Secondly, we claim this yields examples of two Galois extensions $E_1,E_2/K$ with group isomorphic to  $C_8$ that cannot be obtained as the specialization of a single Galois extension $E/K(t)$ with group isomorphic to $C_8$. 

To prove the first claim, note that the prime $2$ splits in $K$ since $a\equiv 1$ mod $8$. Let $\fp_1,\fp_2$ be the two primes of $K$ over $2$. Thus, the field 
$M=M_{1}:=K(\sqrt{a},i)$ induces biquadratic extensions $M_{\fp_i}/K_{\fp_i}$ upon completion at $\fp_i$, $i=1,2$. 
By Proposition \ref{prop:projective}, the $R$-equivalence classes on $\HG^1(K,G)$ are in bijection with the quotient of $\Br(M/K)$ by 
$\Br(K[\sqrt{-1}]/K)+\Br(K[\sqrt{2}]/K)$. By Lemma \ref{lem:local}, this quotient is isomorphic to $C_2^{k-1}$, where $k$ is the number of places $\nu$ of $K$ for which $M_\nu/K_\nu$ is noncyclic. Since $M/K$ is unramified away from $\fp_1,\fp_2$, we have $k\leq 2$. Since $[\mQ_2(\sqrt{2},i):\mQ_2]=4$, we have $[M_{\fp_i}:K_{\fp_i}]=[M:K]=4$ and hence $\Gal(M_{\fp_i}/K_{\fp_i})\cong \Gal(M/K)$ is noncyclic, so that $k=2$. Thus, there are two $R$-equivalence classes on $\HG^1(K,G)$, as claimed.

To see the second claim, let $\varphi_1,\varphi_2\in \HG^1(K, G)$ correspond to the two distinct $R$-equivalence classes, and $E_1,E_2/K$ the two $G$-Galois extensions corresponding to  $\varphi_1,\varphi_2$, resp., see \S\ref{sec:setup}.  Since $\varphi_1,\varphi_2$ are in different $R$-equivalence classes, their images in $\HG^1(K,S)$ are distinct by Lemma \ref{lem:BG-r-equiv}. If there was a $G$-Galois extension $E/K(t)$ that specializes to both $E_1,E_2$, say at $t_1,t_2\in K$, resp., then any $\varphi\in \HG^1(K(t),G)$ that induces $E/K(t)$ specializes at $t_1,t_2$ to $\varphi_1',\varphi_2'\in \HG^1(K,G)$ 
where $\varphi_i'$ is the composition of $\varphi_i$ with an automorphism of $G$, $i=1,2$. Since an automorphism of $C_8$ sends a generator to an odd power of it, we have $\varphi_i'=\varphi_i^{j_i}$ for odd $j_i$, and hence $\varphi_i'$ and $\varphi_i$ map to the same element in $\HG^1(K,S)\cong C_2^{k-1}$, for $i=1,2$, contradicting that the images of $\varphi_1$ and $\varphi_2$ in $\HG^1(K,S)$ are distinct. Thus such  $E/K(t)$ does not exist.
\end{example}

Over $p$-adic fields, one has the following non-$R$-trivial examples:
\begin{example}\label{exam:CT}
Let $a=p=3$ and $K=\mQ_p$. We claim that $\HG^1(K,C_8^{(a)})$ is not $R$-trivial, yielding a nonconstant version of the examples desired at the end of \cite[Appendix]{CT2}. 

To see the claim note that $(3)$ is inert in $E=K(\eta_3)/K$ and  is ramified in   $K(\sqrt{-a})/K$, so that $K(\mu_8)/K$ is a noncyclic extension. Thus, the claim follows from the combination of Theorem \ref{thm:2-power} and Remark \ref{rem:p-adic}.  
\end{example}

\subsection{Semidirect products of abelian groups}\label{sec:example2}
Let $p$ be an odd prime. We next give an example a number field $K$ and a split group extension $E$ of constant abelian $p$-groups $G,A$:
$$ \xymatrix{
1\ar[r] & A \ar[r] & E\ar[r] & G\ar[r] & 1, 
}
$$
such that $\HG^1(K,E)$ is not $R$-trivial. We claim this is the case when, similarly to \cite[Ex.\ 5.4]{DLN}, one picks  $G=(C_p)^3$ and: 
\begin{itemize}
\item $A$ is the dual $\hat I=\Hom(I,\mQ/\Z)$ of the augmentation ideal $I\lhd (\mathbb Z/p^3)[G]$; and
\item $K:=K_0(\mu_{p^3})$, where $K_0/\mQ$ is a quadratic extension in which $p$ splits. 
\end{itemize}
The claim follows directly from the following proposition. Let 
$$\Sha^1_{bic}(G,\hat A):=\ker\left[\HG^1(G,\hat A)\ra\prod_{H\leq G}\HG^1(H,\hat A)\right],$$
where $H$ runs over bicyclic subgroups of $G$. Let $\exp(A)$ denote the exponent of $A$. 
\begin{prop}\label{prop:DLN}
Let $G$ be a finite abelian group and $A$ a $G$-module for which 
$\Sha^1_{bic}(G,\hat A)$ 
is nontrivial. 
Let $K$ be a number field containing  $\mu_m$ for $m:=\exp(A)$, such that $G$ appears as the Galois group of a field extension of $K_{\fp_i}$ for two  primes $\fp_i$, $i=1,2$ of $K$. Then $\HG^1(K,A\rtimes G)$ is not $R$-trivial. 
\end{prop}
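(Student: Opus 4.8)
Write $H=A\rtimes G$. The plan is to produce two classes $\psi_1,\psi_2\in\HG^1(K,H)$ separated by an unramified cohomological invariant with values in the Brauer group; since such invariants are constant on $R$-equivalence classes, this forces $\psi_1\not\sim_R\psi_2$. Fix a closed embedding $H\hookrightarrow\GL_N$ and set $X=\GL_N/H$, so that by Lemma~\ref{lem:BG-T} the map $X(K)\to\HG^1(K,H)$ identifies $X(K)/R$ with $\HG^1(K,H)/R$. Because $\mu_m\subseteq K$ (where $m=\exp A$), for every $\gamma\in\HG^1(L',G)$ the Cartier dual of the $K$-group ${}_\gamma A$ is ${}_\gamma\hat A$, the $G$-module $\hat A=\Hom(A,\mathbb Q/\mathbb Z)$ split by the fixed field of $\gamma$, with a perfect $\mu_m$-valued pairing. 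Fix $0\neq\xi_0\in\Sha^1_{bic}(G,\hat A)$. By Example~\ref{exam:twist-abelian}, every $\psi\in\HG^1(L',H)$ has an image $\beta_\psi\in\HG^1(L',G)$ and a canonical ``$A$-part'' $\alpha_\psi\in\HG^1(L',{}_{\beta_\psi}A)$, the base point of the fibre being given by the semidirect splitting $G\hookrightarrow H$. Set
\[ I(\psi):=\alpha_\psi\cup\inflation_{\beta_\psi}(\xi_0)\ \in\ \HG^2(L',\mu_m)=\Br(L')[m].\]
This is a natural transformation of functors on extensions of $K$, and it is unramified (if $\psi$ extends over a discrete valuation ring with residue characteristic $0$ then so do $\beta_\psi$ and $\alpha_\psi$, hence so does the cup product). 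Hence $I$ is represented by a class $b\in\Br_{nr}(X)$, i.e.\ $b$ extends to a smooth compactification $\bar X$ of $X$; and each step of an $R$-equivalence, a rational map $\mathbb P^1_K\to X$, extends to a morphism $\mathbb P^1_K\to\bar X$, along which $b$ pulls back into $\Br(\mathbb P^1_K)=\Br(K)$ --- a constant. Evaluating at the two marked rational points shows $\psi\mapsto I(\psi)$ factors through $\HG^1(K,H)/R$. Thus it suffices to find $\psi_1,\psi_2$ with $I(\psi_1)\neq I(\psi_2)$.

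Using the hypothesis that $G$ occurs as a Galois group over $K_{\fp_1}$ and over $K_{\fp_2}$, together with Grunwald--Wang (whose special case is ruled out because $\mu_m\subseteq K$), choose a surjection $\beta\colon\Gamma_K\twoheadrightarrow G$ whose splitting field $L/K$ has decomposition group $\Gal(L_{\fp_i}/K_{\fp_i})=G$ at each $\fp_i$ and bicyclic (e.g.\ cyclic, unramified) decomposition group at every other place. Let $\psi_2$ be the image of $[\beta]$ under the splitting $\HG^1(K,G)\to\HG^1(K,H)$, so $\alpha_{\psi_2}=0$ and $I(\psi_2)=0$; and for $\xi\in\HG^1(K,{}_\beta A)$ let $\psi_1=\psi_1(\xi)$ be the class in the fibre over $[\beta]$ with $\alpha_{\psi_1}=\xi$, so $I(\psi_1)=\xi\cup\eta$, where $\eta:=\inflation_\beta(\xi_0)\in\HG^1(K,{}_\beta\hat A)$. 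It remains to choose $\xi$ with $\xi\cup\eta\neq0$ in $\Br(K)$.

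This last step --- a local-to-global computation in the spirit of \cite{DLN} --- is the main obstacle. Since $\xi_0$ restricts to $0$ on every bicyclic subgroup of $G$, the localization $\eta_v$ vanishes at every $v\notin\{\fp_1,\fp_2\}$, whereas $\eta_{\fp_i}=\inflation_{\fp_i}(\xi_0)\neq0$ because inflation along $\Gamma_{K_{\fp_i}}\twoheadrightarrow G$ is injective and $\xi_0\neq0$. Hence $\xi\cup\eta$ is supported on $\{\fp_1,\fp_2\}$, and by the reciprocity law $\xi\cup\eta\neq0$ as soon as $\langle\mathrm{loc}_{\fp_1}(\xi),\eta_{\fp_1}\rangle\neq0$ for the local Tate pairing $\HG^1(K_{\fp_1},{}_\beta A)\times\HG^1(K_{\fp_1},{}_\beta\hat A)\to\mathbb Q/\mathbb Z$, which is perfect; as $\eta_{\fp_1}\neq0$, some local class pairs nontrivially with it. Realizing such a local class as the $\fp_1$-localization of a global $\xi\in\HG^1(K,{}_\beta A)$ amounts, via Poitou--Tate, to a compatibility condition at $\fp_2$ that can be solved because $\fp_2$ likewise carries the full group $G$ as decomposition group; this is where $\Sha^1_{bic}(G,\hat A)\neq0$ and the realizability of $G$ at both $\fp_1$ and $\fp_2$ are all used. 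Granting such a $\xi$, we obtain $I(\psi_1)=\xi\cup\eta\neq0=I(\psi_2)$, hence $\psi_1\not\sim_R\psi_2$, so $\HG^1(K,A\rtimes G)$ is not $R$-trivial.
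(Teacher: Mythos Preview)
Your overall strategy matches the paper's: build an invariant by cupping the $A$-part of $\psi$ with the inflation of a fixed nonzero $\xi_0\in\Sha^1_{bic}(G,\hat A)$, show it is constant along $R$-equivalence, then exhibit classes it separates. There are, however, two genuine gaps.

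First, the claim that $b$ extends to a smooth compactification $\bar X$ is not justified. You only argue that $I(\psi)$ is unramified wherever $\psi$ itself extends; this gives $b\in\Br(X)$, not $b\in\Br(\bar X)=\Br_{nr}(K(X)/K)$. At boundary divisors the universal $H$-torsor ramifies, and you have said nothing about the residue of $\alpha\cup\eta$ there. (One expects the hypothesis $\xi_0\in\Sha^1_{bic}$ to enter precisely here---inertia being cyclic, $\xi_0$ restricts to zero on it---but the residue computation for such a twisted cup product is not automatic and is missing.) The paper sidesteps this issue entirely: rather than a global Brauer invariant on a compactification, it works with the \emph{local} pairing $d_{\gamma,\fq}(\psi)=\langle c(\fq)^*(\gamma),\chi(\fq)\rangle$ at a single prime $\fq\in S$, and proves directly via a Krasner-type specialization (Lemma~\ref{lem:krasner}) combined with a $\fp$-adic interpolation trick between two specializations that this quantity is unchanged along any one-parameter family $\psi(t)\in\HG^1(K(t),E)$.

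Second, your Grunwald--Wang step asks for too much: you cannot prescribe bicyclic decomposition at \emph{every} place, only at finitely many. The paper instead fixes the finite set $S$ of primes over $p$ (the only places where, $G$ being a non-bicyclic $p$-group in the intended application, the decomposition can fail to be bicyclic) and runs the argument with $\Sha^1_S$ rather than $\Sha^1_{\{\fp_1,\fp_2\}}$. Likewise your final Poitou--Tate step (``a compatibility condition at $\fp_2$ that can be solved'') is a black box; the paper makes it explicit by proving $\Sha^1_{\fq}(K,{}_c\hat A)=0$ for the chosen surjection $c$ (using that both $\fp_1,\fp_2$ carry full decomposition group $G$), whence by Poitou--Tate duality $\res_\fq\colon\HG^1(K,{}_cA)\to\HG^1(K_\fq,{}_cA)$ is surjective and the required local class globalizes.
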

Note that assumptions of the proposition hold for the above example: for, $\Sha^1_{bic}(G,\hat A)$ is nontrivial by \cite[Lem.\ 5.5]{DLN}; and $C_p^3$ appears as a Galois group over $\mQ_p(\mu_{p^3})$, and hence over $K_{\fp_1}\cong K_{\fp_2}\cong\mQ_p(\mu_{p^3})$. 

The proof requires the following version of Krasner's lemma for torsors:
\begin{lemma}\label{lem:krasner}
Let $G$ be a finite group scheme over a field $K_v$ which is complete with respect to a discrete valuation $v$, and suppose that we have a $G$-torsor $P_t$ over $K_v[t][f^{-1}]$ for some polynomial $f$. Let $t_0 \in K_v$. Then there exists $\epsilon > 0$ such that whenever $|t_1 - t_0|_v < \epsilon$, with $f(t_1), f(t_0) \neq 0$, we have that $P_{t_1}$ and $P_{t_0}$ are isomorphic as $K_v$ torsors.
\end{lemma}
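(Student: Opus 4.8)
The plan is to deduce this from two ingredients: the $v$-adic continuity of specialization in an algebraic family of parameters, and the local constancy, in the $v$-adic topology, of the isomorphism class of a $G$-torsor over $K_v$ as the base point moves. These together are precisely the torsor-theoretic content of Krasner's lemma. I take $G$ to be étale, which is automatic (and the only case occurring in the applications) as soon as $\charak K_v\nmid|G|$.

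First I would reduce to a universal family, reusing the machinery of \S\ref{sec:embed}. Embed $G$ as a closed subgroup of $\mathcal G=\GL_n$ via the regular representation on the finite étale algebra $\mathcal O(G)$, put $X=\mathcal G/G$ (a smooth quasi-projective $K_v$-variety), and let $\pi\colon\mathcal G\to X$ be the resulting universal $G$-torsor (the torsor over $X$ corresponding to $\mathrm{id}_X$, as before Lemma~\ref{lem:dense}). Set $R=K_v[t][f^{-1}]$. As a localization of $K_v[t]$, the ring $R$ is a PID, so every finitely generated projective $R$-module is free and $\HG^1(\Spec R,\GL_n)=1$; the long exact sequence of pointed sets in étale cohomology attached to $1\to G\to\mathcal G\to X\to1$ over $\Spec R$ (cf.\ \S\ref{sec:twists}) then shows that $P_t$ is the pullback of $\mathcal G\to X$ along some morphism $\xi\colon\Spec R\to X$. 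Hence, for every $t_1\in K_v$ with $f(t_1)\neq0$, one has $P_{t_1}\cong\pi^{-1}(\xi(t_1))$ and likewise $P_{t_0}\cong\pi^{-1}(\xi(t_0))$, where $\xi(t_1),\xi(t_0)\in X(K_v)$. It thus suffices to show that $t_1\mapsto\bigl[\pi^{-1}(\xi(t_1))\bigr]\in\HG^1(K_v,G)$ is constant on a $v$-adic neighbourhood of $t_0$.

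Next I would establish two independent facts. (i) The parametrization $t\mapsto\xi(t)\in X(K_v)$ is $v$-adically continuous at $t_0$: after replacing $f$ by $fg$ for a suitable $g\in R$ with $g(t_0)\neq0$ we may assume $\xi$ factors through an affine open $U\subseteq X$ containing $\xi(t_0)$, and then the coordinates of $\xi$ are rational functions of $t$ regular at $t_0$. (ii) The classifying map $c\colon X(K_v)\to\HG^1(K_v,G)$, $x\mapsto[\pi^{-1}(x)]$, is locally constant: by the long exact sequence over $K_v$ with $\HG^1(K_v,\GL_n)=1$, the fibres of $c$ are exactly the $\mathcal G(K_v)$-orbits on $X(K_v)$, so it is enough to prove these orbits are open. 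Fixing $x_0\in X(K_v)$, factor the orbit map $\mathcal G\to X$, $g\mapsto g\cdot x_0$, as $\mathcal G\twoheadrightarrow\mathcal G/\Stab_{\mathcal G}(x_0)\xrightarrow{\ \sim\ }X$ (the right map being an isomorphism over $\sep{K_v}$ by transitivity of the action, hence over $K_v$ by descent). Since $\Stab_{\mathcal G}(x_0)$ is a closed subgroup scheme of $\mathcal G$ that over $\sep{K_v}$ is conjugate to $G$, it is finite étale over $K_v$, so the orbit map is étale and surjective. By the $v$-adic inverse function theorem it is then a local homeomorphism on $K_v$-points, so its image $\mathcal G(K_v)\cdot x_0$ is open in $X(K_v)$. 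Composing (i) and (ii) produces the desired $\epsilon$.

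The main obstacle is (ii): identifying the fibres of $c$ with the rational orbits and proving these are open. This is where all the geometric content lies — the étaleness of the orbit map (via the identification of $\Stab_{\mathcal G}(x_0)$ as a $K_v$-form of the finite étale group $G$) together with the $v$-adic inverse function theorem. Everything else is bookkeeping, modulo the input $\HG^1(\Spec R,\GL_n)=1$. As an alternative more in the spirit of the classical argument, one can instead observe that the $\Spec R$-scheme $\mathcal I$ of $G$-torsor isomorphisms $P_t\xrightarrow{\ \sim\ }P_{t_0}\times_{K_v}R$ is, near $t_0$, a torsor under a finite étale group and carries the identity section over $t=t_0$; Hensel's lemma lifts this to a section of $\mathcal I$ over $K_v[[t-t_0]]$, whose coordinates, being algebraic over $R$, converge on a disc $|t-t_0|_v<\epsilon$ and specialise there to an isomorphism $P_{t_1}\xrightarrow{\ \sim\ }P_{t_0}$.
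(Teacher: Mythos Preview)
Your main argument is correct and takes a genuinely different route from the paper's. The paper proceeds essentially along the lines of your ``alternative'': it forms the isomorphism scheme $\mathcal I$ over $R=K_v[t][f^{-1}]$ whose $S$-points are the $G$-torsor isomorphisms $(P_{t_0}\times_{K_v}R)_S\xrightarrow{\sim}(P_t)_S$, observes that $\mathcal I$ is itself a $G$-torsor and hence $\Spec(E)$ for a finite \'etale $E/R$, then (after shrinking $R$) writes $E=R[X]/g$ for a single monic $g\in R[X]$. Since the fibre $\mathcal I_{t_0}$ has the identity as a $K_v$-point, $g(t_0,X)$ has a root in $K_v$, and the classical Krasner lemma applied to $g$ produces a $K_v$-root of $g(t_1,X)$---hence a $K_v$-point of $\mathcal I_{t_1}$---for all $t_1$ sufficiently close to $t_0$.

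Your primary approach instead globalizes: you pull $P_t$ back from the universal torsor $\GL_n\to X=\GL_n/G$ via a morphism $\xi\colon\Spec R\to X$ (using that $\HG^1(\Spec R,\GL_n)=1$ over the PID $R$), and then combine the $v$-adic continuity of $\xi$ with the openness of $\GL_n(K_v)$-orbits on $X(K_v)$, the latter via \'etaleness of the orbit map and the implicit function theorem. This is more structural and makes transparent the link to the orbit decomposition of the classifying space, at the cost of the extra input $\HG^1(R,\GL_n)=1$; the paper's argument is more self-contained and stays closer to the classical Krasner statement. Both proofs tacitly use that $G$ is \'etale (you say so explicitly; the paper uses it when asserting $E/R$ is \'etale), so your caveat is appropriate.
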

\begin{proof}
Let $R = K_v[t][f^{-1}]$ and 
let $P_0 = P_{t_0} \times_{\Spec(K_v)} \Spec(R)$. 
Consider the functor $\mathcal I$ which associates to an $R$-algebra extension $S/R$ the set 
\[\mathcal I(S) = \{\phi: (P_0)_S \to (P_t)_S \mid \text{$\phi$ is an isomorphism of $G$-torsors}\}.\] 
We see that this defines a right $G$-torsor via precomposition, and hence may be written as $\mathcal I = \Spec(E)$ for some finite \'etale algebra $E/R$. Let $m$ be the rank of $E$ over $R$.

We claim that (after possibly changing $f$) we may choose a single monic polynomial $g \in R[X]$ such that $E = R[X]/g$ is of degree $m$. To see this, we set $E_{t_0} = E \otimes_R R/(t - t_0)$ and note that $R/(t - t_0) \cong K_v$ and as $E_{t_0}/K_v$ is \'etale, for a Zariski dense set of choices of $\alpha \in E_{t_0}$, we have $E_{t_0} = K_v(\alpha)$. Choose such an $\alpha \in E_{t_0}$ and let $\til \alpha \in E$ be a lift. If $U \subset \Spec R$ is the open set over which $1, \til \alpha, \til \alpha^2, \ldots, \til \alpha^{m-1}$ are a basis for $E/R$, then after possibly replacing $f$ by $f'$ for another polynomial $f'$ which doesn't vanish at $t_0$, we may assume that $U = \Spec(R)$.

We therefore can write $E = K_v[t][f^{-1}][X]/(X^m + a_{m-1}(t) X^{m-1} + \cdots + a_0(t))$ for some $a_i(t) \in K_v[t][f^{-1}]$. By definition of $\mathcal I$,
\begin{multline*}
\Hom_{K_v\text{-alg}}(E \otimes_R R/(t - t_0), K_v) = \Hom_{K_v\text{-sch}}(\Spec(K_v), \mathcal I \times_{\Spec(R)} \Spec(R/(t - t_0))) \\ = \{\phi: P_{t_0} \overset{\sim}\to P_{t_0}\} \neq \emptyset.   
\end{multline*}
Consequently, the polynomial $X^m + a_{m-1}(t_0) + \cdots + a_{0}(t_0)$ has a root, say $\alpha_0$, in $K_v$. But therefore by Krasner's Lemma, if we choose $t_1$ sufficiently close to $t_0$, we find that $X^m + a_{m-1}(t_1) + \cdots + a_{0}(t_1)$ will still have a root in $K_v$, which then tells us that 
\[\{\phi: P_{t_1} \overset{\sim}\to P_{t_0}\} = \Hom(E \otimes_R R/(t - t_1), K_v) \neq \emptyset\]
as desired.
\end{proof}

\begin{proof}[Proof of Proposition \ref{prop:DLN}]
The proof follows that of \cite[Proof of Thm.\ 5.1]{DLN}, cf.\ also \cite{Riv}. 
Let $S$ be the set of primes of $K$ lying over $p$. Note that since $\Sha^1_{bic}(G,\hat A)\neq 0$, the group $G$ is not bicyclic and hence appears as a Galois group of a field extension of $K_{v}$ only for places $v$ over $p$. 

For a nontrivial $\gamma\in \Sha^1_{bic}(G,\hat A)$, and $\fq \in S$, we  construct an invariant $d_{\gamma,\fq}(\psi)\in \mQ/\Z$ that is preserved as $\psi$ runs through specialization of an element in $\HG^1(K(t),E)$. 
Following \S\ref{sec:twists}, for $c\in \ZG^1(K,G)=\HG^1(K,G)$ (note that the equality holds since $G$ is abelian),  let ${}_{c}A$ denote the twist of $A$ by $c$, and recall it is the fiber of the projection ${}_cE\ra {}_cG$. Note that ${}_cG=G$ as $G$ is abelian.  
Setting $\Sha^1_S(K,A)=\ker\left(\HG^1(K,A)\ra\prod_{\fp\not\in S}\HG^1(K_\fp,A)\right)$, we consider the nondegenerate bilinear pairing:
\begin{equation}\label{equ:pairing} \langle\,,\rangle_c: \Sha^1_S(K,{}_{c}\hat A)/\Sha^1_\emptyset(K,{}_{c}\hat A)\times \prod_{\fp\in S}\HG^1(K_\fp,{}_cA)\ra\mQ/\Z,\end{equation}
induced by the restriction map $\Sha^1_S(K,{}_{c}\hat A)\ra  \prod_{\fp\in S}\HG^1(K_\fp,{}_c\hat A)$ (whose kernel is $\Sha^1_\emptyset(K,{}_c\hat A)$), and summation over the local Poitou--Tate pairings 
$$\langle ,\rangle_{{c(\fp )}}:\HG^1(K_\fp,{}_c\hat A)\times \HG^1(K_\fp,{}_cA)\ra\mQ/\Z,\text{ for }\fp\in S.$$  By the Poitou--Tate theorem \cite[Prop.\ 2.5]{DLN}, the right kernel of the pairing $\langle , \rangle_c$ is the image of the restriction map $\HG^1(K,{}_cA)\ra \prod_{\fp\in S}\HG^1(K_\fp,{}_cA)$. 

To construct the invariant $d_{\gamma,\fq}$, we associate to $\psi\in \HG^1(K,E)$ its image  $c\in \HG^1(K,G) = \ZG^1(K, G)$, and a class $\chi\in \HG^1(K,{}_cA)$ which maps to ${}_c\psi$. There exists such a (unique) class since the image of $\HG^1(K,{}_cA)$ coincides with the fiber of $\HG^1(K,{}_cE)\ra \HG^1(K,G)$ over $c$ as in Example \ref{exam:twist-abelian}. 
Let $c(\fq)\in\HG^1(K_{\fq},G)$ be the restriction $c$, and similarly define $\chi(\fq)$. 
The invariant $d_{\gamma,\fq}(\psi)$ is then defined to be $\langle c(\fq)^*(\gamma),\chi(\fq)\rangle_{c(\fq)}$, where 
$c(\fq)^*:\HG^1(G,{}_c\hat A)\ra\HG^1(K_\fq,{}_c\hat A)$ is the inflation map. 

We claim that $d_{\gamma,\fq}(\psi)=d_{\gamma,\fq}(\varphi)$ when $\varphi,\psi$ are the specializations of an element in $\HG^1(K(t),E)$ 
at $t\mapsto t_1$ and $t\mapsto t_2$, resp. 
As in Step III  of \cite[Thm.\ 5.1]{DLN}, the inflation $c^*(\gamma)\in \Sha^1_S(K,{}_c\hat A)$ of $\gamma$ is nontrivial, and 
the image $(\chi(\fp))_{\fp\in S}\in \prod_{\fp\in S}\HG^1(K_\fp,{}_cA)$ of $\chi$ is orthogonal to it, so that 
$\sum_{\fp\in S}\langle c(\fp)^*(\gamma),\chi(\fp)\rangle_{c(\fp)}=0.$
Since $d_{\gamma,\fq}(\psi)=\langle c(\fq)^*(\gamma),\chi(\fq)\rangle_{c(\fq)}$ by definition,  we have:
\begin{equation}\label{equ:d-equ}\sum_{\fp\in S\setminus\{\fq\}}\langle c(\fp)^*(\gamma),\chi(\fp)\rangle_{c(\fp)}=-d_{\gamma,\fq}(\psi).
\end{equation}

Now pick a specialization $t\mapsto t_0$ for $t_0\in K$ which is $\fq$-adically close to $t_1$ and $\fp$-adicaly close to $t_2$ for every $\fp\in S\setminus\{\fq\}$. Then by Lemma \ref{lem:krasner} implies that the restirctions of $\xi:=\psi(t_0)$ to $\HG^1(K_{\fq},E)$ and $\HG^{1}(K_{\fp},E)$,  $\fp\in S\setminus \{\fq\}$ coincide with $\varphi(\fq)$ and $\psi(\fp)$, resp. Thus, its image $c_\xi\in \HG^1(K,G)$ has restrictions $c(\fp)$ to $\HG^1(K_{\fp},G)$, $\fp\in S\setminus\{\fq\}$ and $c_\varphi(\fq)$ to $\HG^1(K_{\fq},G)$, 
where $c_\varphi\in \HG^1(K,G)$ is the restriction of $\varphi$. Moreover, the preimage $\chi_\xi\in \HG^1(K,{}_{c_\xi}A)$ of $\xi$ has restrictions $\chi(\fp)$ to $\HG^1(K_\fp,{}_{c(\fp)}A)$, $\fp\in S\setminus\{\fq\}$ and $\chi_\varphi(\fq)$ to $\HG^1(K_{\fq},{}_{c_\varphi(\fq)}A)$, where $\chi_\varphi$ is the preimage of $\varphi$ in $\HG^1(K,{}_{c_\varphi}A)$. Thus,  
\begin{align*}
    0=\sum_{\fp\in S}\langle c_\xi(\fp)^*(\gamma),\chi_\xi(\fp)\rangle_{c_\xi(\fp)} & = \langle c_\xi(\fq)^*(\gamma),\chi_\xi(\fq)\rangle_{c_\xi(\fq)} + \sum_{\fp\in S\setminus\{\fq\}}\langle c_\xi(\fp)^*(\gamma),\chi_\xi(\fp)\rangle_{c_\xi(\fp)} \\
     & = \langle c_\varphi(\fq)^*(\gamma),\chi_\varphi(\fq)\rangle_{c_1(\fq)} + \sum_{\fp\in S\setminus\{\fq\}}\langle c(\fp)^*(\gamma),\chi(\fp)\rangle_{c(\fp)} \\
     & = d_{\gamma,\fq}(\varphi) - d_{\gamma,\fq}(\psi), 
\end{align*} where the last equality follows from  \eqref{equ:d-equ}. Hence $d_{\gamma,\fq}(\psi)=d_{\gamma,\fq}(\varphi)$,  proving the claim.

For the trivial morphism $\mathbbm{1}\in \HG^1(K,E)$, one always has $d_{\gamma,\fq}(\mathbbm{1})=0$. We claim that there exist $\fq\in S$ and  $\psi\in \HG^1(K,E)$ such that $d_{\gamma,\fq}(\psi)\neq 0$. Since $d_{\gamma,\fq}$ is invariant, this implies that $\psi$ is not $R$-equivalent to $\mathbbm{1}$ as required. 
To prove the claim, 
fix an epimorphism $c\in \HG^1(K,G)$ whose restrictions $c(\fp_i)$ to $\HG^1(K_{\fp_i},G)$, $i=1,2$ are surjective. 
For this, we first show $\Sha^1_{\fq}(K,{}_c\hat A)=0$: 
Letting $L$ be the fixed field of $\ker c$ and $Q$ the set of primes of $L$ over $\fq$, 
note that the action of $\Gamma_L$ on $\hat A$ is trivial since $K$ contains the roots of unity of order $p^3=\exp(A)$, and hence $\Sha^1_Q(L,{}_c\hat A)=\ker\left(\Hom(\Gamma_L,\hat A)\ra\prod_{\fp\notin Q}\Hom(\Gamma_{L_\fp},\hat A)\right)$ is trivial by Chebotarev's theorem.
Thus, every $\chi\in \Sha^1(K,{}_c\hat A)$ comes from some $\oline \chi\in\HG^1(G,{}_c\hat A)$ under inflation $c^*$. 
As $c(\fp_i)\in \HG^1(K_{\fp_i},G)$, $i=1,2$ are surjections, we shall reach a contradiction using the diagram whose vertical maps are given by restriction:
$$\xymatrix{
0\ar[r] & \HG^1(G,{}_c\hat A)\ar[r]^{c^*} \ar[d]_{\cong} & \HG^1(K,{}_c\hat A) \ar[d] \\
0\ar[r] &  \HG^1(G_{\fp_i},{}_{c(\fp_i)}\hat A)\ar[r]^{c(\fp_i)^*} & \HG^1(K_{\fp_i},{}_{c(\fp_i)}\hat A). 
}
$$ Indeed, the surjectivity of $c(\fp_i)$ implies the restriction $\HG^1(G,{}_c\hat A)\ra \HG^1(G_{\fp_i},{}_{c(\fp_i)}\hat A)$ is an isomorphism and hence the image of $\oline \chi$ is nontrivial and hence is mapped to a nontrivial element under $c(\fp_i)^*$. This  contradicts the triviality of the image of $\chi\in \Sha^1(K,{}_c\hat A)$ in   $\HG^1(K_{\fp_i},{}_{c(\fp_i)}\hat A)$ whenever $\fp_i\neq \fq$. 
Thus, $\Sha^1_\fq(K,{}_c\hat A)=0$.

Since the Poitou--Tate pairing \eqref{equ:pairing} is nondegenerate, the above triviality of $\Sha^1_\fq(K,{}_c\hat A)$ implies $\res_\fq:\HG^1(K,{}_cA)\ra\HG^1(K_\fq,{}_cA)$ is surjective, for every $\fq\in S$. 
Since $c^*(\gamma)\in \Sha^1_S(K,{}_c\hat A)$ is nontrivial as above, and the local Poitou--Tate pairing is nondegenerate, there exist a prime  $\fq\in S$ and $\chi(\fq)\in \HG^1(K_\fq,{}_cA)$ such that  $\langle c(\fq)^*(\gamma),\chi(\fq)\rangle_{c(\fq)}\neq 0$. As $\res_\fq$ is surjective, 
we may pick $\chi\in \HG^1(K,{}_cA)$ that restricts to $\chi(\fq)\in \HG^1(K_\fq,{}_cA)$. Letting $\psi$ be the image of $\chi$ under $\HG^1(K,{}_cA)\ra\HG^1(K,{}_cE)$, we get that $d_{\gamma,\fq}(\psi)\neq 0$, proving the claim. 
\end{proof}
The proof of the above theorem suggests a deeper relation between $R$-equivalence and weak approximation that remains to be investigated.

\end{document}